\newcommand{\mM}{\mathcal{M}}
\newcommand{\mA}{\mathcal{A}}
\newcommand{\mF}{\mathcal{F}}
\newcommand{\mV}{\mathcal{V}}
\newcommand{\mD}{\mathcal{D}}
\newtheorem{Theorem}{Theorem}[section]
\newtheorem{Lemma}[Theorem]{Lemma}
\newtheorem{Proposition}[Theorem]{Proposition}
\newtheorem{Definition}[Theorem]{Definition}
\newtheorem{Remark}[Theorem]{Remark}
\numberwithin{equation}{section}
\algnewcommand\INPUT{\item[\textbf{Input:}]}%
\algnewcommand\OUTPUT{\item[\textbf{Output:}]}%
\title{\bf Sparse Reconstructions of Acoustic Source for Inverse Scattering Problems in Measure Space}
\author{Xueshuang Xiang\thanks{Qian Xuesen Laboratory of Space Technology, China Academy of Space Technology, Beijing, 100094. Email: \href{mailto:xiangxueshuang@qxslab.cn}{xiangxueshuang@qxslab.cn}.}
	\and Hongpeng Sun\thanks{Institute for Mathematical Sciences,
		Renmin University of China, 100872 Beijing, People's Republic of China.
		Email: \href{mailto:hpsun@amss.ac.cn}{hpsun@amss.ac.cn}.} \thanks{Corresponding author. }}
\begin{document}
\maketitle

\begin{abstract}
This paper proposes a systematic mathematical analysis of both the direct and inverse acoustic scattering problem given the source in Radon measure space.
For the direct problem, we investigate the well-posedness including the existence, the uniqueness, and the stability by introducing a special definition of the weak solution, i.e., \emph{very} weak solution.
For the inverse problem, we choose the Radon measure space instead of the popular $L^1$ space to build the sparse reconstruction, which can
guarantee the existence of the reconstructed solution.
The sparse reconstruction problem can be solved by the semismooth Newton method in the dual space.
Numerical examples are included.

\iffalse
In this paper, we give a systematic mathematical analysis of the direct acoustic scattering problem with the acoustic source in the Radon measure space and present the sparse reconstruction for the inverse scattering problem. Since the physical meaningful solution is the radiating solution of Helmholtz equation which is charactered by the Sommerfeld radiating condition, we give a special definition of the weak solution to capture this property. We investigate the well-posedness including the existence, the uniqueness, and the stability of the direct scattering problem under our definition. For the sparse reconstruction of the inverse problem, we choose the Radon measure space instead of the popular $L^1$ space to guarantee the existence the reconstructed solution. We use the semismooth Newton method in the dual space to calculate the solution numerically.
\fi
\end{abstract}

\section{Introduction}

Inverse acoustic scattering is very important in a lot of applications including sonar imaging, oil prospecting, non-destructive detection and so on \cite{CK}. In lots of applications, we only need a sparse acoustic source to produce a certain scattering field for detection and imaging. In image and signal processing, one popular way is using $L^1(\Omega)$ norm as a sparse regularized term in finite dimensional space, where $\Omega \subseteq \mathbb{R}^d$ is a bounded and compact domain with boundary $\partial \Omega$ of class $C^3$ and contains the sources. However, for the Helmholtz equation associated with acoustic scattering, it is hard to guarantee the existence of a reconstructed solution $f$ in $L^1(\Omega)$ space, due to the lack of weak completeness in $L^1(\Omega)$ (see Chapter 4 of \cite{Bre}). Instead, we turn to a larger space $\mM(\Omega)$, which is the Radon measure space and is a Banach space, where the existence of the reconstructed sparse solution $f$ can be guaranteed. Furthermore, if $f \in L^1(\Omega)$, we also have $f \in \mM(\Omega)$, since $ L^1(\Omega)$ can be embedded in $\mM(\Omega)$.
Henceforth, we would focus on the analysis and reconstruction of  the following inverse scattering problem:

\emph{Reconstructing a sparse source $f$ in the Radon measure space $\mathcal{M}(\Omega)$ for a given scattered field in $\Omega$}.

%\footnote{First draft written by Hongpeng Sun. This is a joint work of Xueshuang Xiang and Hongpeng Sun. The
%	authors contribute equally to the work.}
%	
Actually, there are already a lot of studies on {the inverse source problem} for acoustic problems.
Mathematical analysis and various efficient numerical reconstruction algorithms with multi-frequency scattering data are developed in \cite{Bao1, Bao2}. The $L^2$ regularization, which is a Tikhonov regularization, is also analyzed and used in \cite{DEL, EV} with single frequency or multiple frequencies. These works are mainly focused on {the $L^2$ source} case.

For elliptic equations with sources in the measure space, there is detailed analysis in bounded domain \cite{JK}. We also refer to the celebrated book \cite{ISA}. Studies on nonlinear elliptic equations can be found in \cite{MV}.  However, we did not found a systematic analysis for the Helmholtz equation as for the elliptic equations, especially for the radiating solution with Sommerfeld {radiation condition}.

%%%%%%%%%%%%%%%%%%%%%%

Our contributions are three-fold.  First, we give a sparse regularization framework in functional spaces. The Banach space setting with the Radon measure is self-consistent and is necessary for the existence of the reconstructed solution.
Second, since we did not find a systematic and elementary analysis of the direct scattering problems with inhomogeneous background medium, we give a detailed
analysis instead. To this end, we first propose a definition of the very weak radiating solution of the direct problem. Furthermore, since the direct scattering problem is essentially an open domain problem, we truncate the domain by the Dirichlet-to-Neumann map outside the measurable sources. The proposed very week solution can capture the properties of the solution including the fundamental solutions of inhomogeneous acoustic equations, which is less regular around the measurable sources and is analytic away from the sources.
Third, we use the semismooth Newton method \cite{CLA1, CLA2} to solve the sparse reconstruction problem.
 Our iterative method is different from the analytic methods including the linear sampling method and factorization method \cite{CK0,KIR}. Although we need to solve linear equation for Newton update during each iteration, the iteration solution would converge to the reconstructed solution superlinearly with the semismooth Newton method \cite{KK}.
The iterative process thus can be accelerated.
%%%%%%%%%%%%%%%%%%%%%%

The paper is organized as follows: In section \ref{sec:prepare}, we discuss the well-posedness including the existence, the uniqueness, the stability of the direct scattering problem within the definition of the proposed very weak solution. In section \ref{sec:sparse:regu:ssn}, we discuss the sparse regularization in the Radon measure space. We study the existence of the minimizer in Radon measure space $\mM(\Omega)$. With the Fenchel duality theory, we use the semismooth Newton method to solve the predual problem to get the sparse solution. Numerical experiments show the semismooth Newton method is effective and efficient. In the last section, we conclude our study with relevant discussion.

\section{Well-posedness of the Direct Scattering Problem}\label{sec:prepare}
The acoustic scattering problems with source in the frequency domain under inhomogeneous medium of $\mathbb{R}^{d}$ with $d=2$ or $d=3$ is governed by the following
equation
\begin{equation}\label{eq:helm}
\begin{cases}
-\Delta u  - k^2 n(x) u = \mu, \quad x \in \mathbb{R}^d, \\
\displaystyle{\lim_{|x| \rightarrow \infty} |x|^{\frac{d-1}{2}} (\frac{\partial u}{\partial |x|} - ik u) = 0,}
\end{cases}
\end{equation}
where $\mu \in \mM(\Omega)$ is a Radon measure and $n(x)$ is the refraction index. Henceforth, we assume $n(x)$ is real and smooth, i.e., $\Im n = 0$. Throughout this paper, we assume $\mu$ is a real measure which is reasonable in physics and $\Omega$ is large enough such that the Radon measure $\mu$ and the smooth function $(n(x)-1)$  are compactly supported in $\Omega$, i.e.,
\begin{equation}\label{eq:nonhomegeneous:souce:support}
\text{supp}(\mu) \Subset \Omega, \quad \text{supp}(n(x)-1) \Subset \Omega.
\end{equation}
While $n(x) \equiv 1$, the equation \eqref{eq:helm} reduces to the Helmholtz equation.  Actually, $\mM(\Omega)$ can be characterized by its dual space $C_0(\Omega)$ through the Riesz representation theorem (see Chapter 4 of \cite{Bre}),
\begin{equation}\label{eq:measure:dual}
\|\mu\|_{\mM(\Omega)} = \sup \left\{\int_{\Omega} u  d\mu : \ {red}u \in C_{0}(\Omega), \ \|u\|_{C_0(\Omega)} \leq 1 \right\}.
\end{equation}
This is also equivalent to $\mM(\Omega) = C_0(\Omega)'$, which means that $\mM(\Omega)$ is weakly compact by the Banach-Alaoglu theorem since $C_0(\Omega)$ is a separable Banach space \cite{Bre}.

%: ADMM, Douglas-Rachford splitting method}

Since the source term $\mu$ is only a measure, the regularity of the solution for \eqref{eq:helm} would be very weak. The following definition of \emph{very} weak solution of \eqref{eq:helm}  can help find the solution we need. We assume $\Omega \Subset B_{R_0} \Subset B_{R_1} \Subset B_{R_2}$ with $B_{R_{i}}$ denoting a ball of radius $R_i$ centered at origin in $\mathbb{R}^d$, $i=0,1,2$. Henceforth, we choose $B_{R_2}$ or $B_{R_1}$ such that $0$ are not Dirichlet eigenvalue of $-\Delta-k^2n(x)$ in $B_{R_2}$ or $B_{R_1}$, which is  reasonable.

\begin{Definition}\label{def:veryweak:helm}
Let's introduce the bilinear form $a(u, \varphi)$ and linear form $b(\varphi)$ for $u \in W^{1,p}(B_{R_2}) \cap H_{loc}^{1}(\mathbb{R}^d\backslash \bar B_{R_1})$ with $p \in [1,\frac{d}{d-1})$, $\varphi \in C^{2,\alpha}(B_{R_2})$ and $\alpha \in (0,1)$ as follows,
\begin{subequations}\label{eq:variational}
\begin{align}
a(u, \varphi): &= \int_{B_{R_2}} ( - u \Delta \bar \varphi- k^2 n(x) u \bar \varphi) dx - \int_{\partial B_{R_2}} (Tu \bar \varphi - u \frac{\partial \bar \varphi }{\partial \nu})ds, \\
b_{\mu}(\varphi):& = \int_{B_{R_2}}   \bar \varphi d\mu,
\end{align}
\end{subequations}
where $\bar \varphi$ denotes the complex conjugate of $\varphi$, $\nu$ denotes the exterior unit normal vector to $\partial B_{R_2}$ and the linear operator $T$ is the Dirchlet-to-Neumann map (see \cite{CX} for 2D case and Chapter 5 of \cite{CK} for 3D case),
\begin{equation}\label{eq:dtn}
T : H^{1/2}(\partial B_{R_2}) \rightarrow H^{-1/2}(\partial B_{R_2}), \quad  T u|_{\partial B_{R_2}}: = \frac{\partial u}{\partial \nu}|_{\partial B_{R_2}},  \quad \forall u \in H^{1/2}(\partial B_{R_2}).
\end{equation}
With these preparations, we define the very weak solution of \eqref{eq:helm} in $B_{R_2}$ as follows, for any $\varphi \in C^{2,\alpha}(B_{R_2})$, finding  $u \in W^{1,p}(B_{R_2}) \cap H_{loc}^{1}(\mathbb{R}^d\backslash \bar B_{R_1})$  such that
\begin{equation}\label{eq:define:veryweak}
a(u, \varphi) = b_{\mu}(\varphi).
\end{equation}

\end{Definition}

The definition \eqref{def:veryweak:helm} is motivated by the properties of the fundamental solutions of Helmholtz equation in the free spaces. It can be derived by multiplying by test functions and integration by parts or with the generalized Green's Theorem involving distributions; see Theorem 2.2 of \cite{CD}. The definition of the very weak solution can also seen as an application of the classical transposition method \cite{LM}.  Now, let's define the Green's function $G(x,y)$ of the background as the radiating solution \cite{CM}
\begin{equation}\label{eq:green:nonhomo}
\begin{cases}
-\Delta_{x}G(x,y) - k^2n(x) G(x,y)=\delta(x-y), \quad x, y \in \mathbb{R}^d. \\
\displaystyle{\lim_{|x| \rightarrow \infty} |x|^{\frac{d-1}{2}} (\frac{\partial G(x,y)}{\partial |x|} - ik G(x,y)) = 0.}
\end{cases}
\end{equation}
 Denoting $m(x)=1-n(x)$,
we thus can construct $G(x,y)$ by the Lippmann-Schwinger integral equation \cite{CM}
\begin{equation}\label{eq:lip:inte}
G(x,y) = \Phi(x,y) + u^s(x,y)= \Phi(x,y) -k^2 \int_{\Omega} \Phi(x,z)m(z)G(z,y)dz.
\end{equation}
where $\Phi(x,y)$ is the fundamental solution of the Helmholtz equation, i.e.,  $ \Phi(x,y) =  \frac{i}{4}H_{0}^{(1)}(k|x-y|) $ in $\mathbb{R}^2$ and $\Phi(x,y) =  \frac{e^{ik|x-y|}}{4 \pi|x-y|}$ in $\mathbb{R}^3$. Although $\Phi(x,y)$ are weakly singular, they have certain regularity; see the following remark.
\begin{Remark}
	While $n(x) \equiv 1$, we get the fundamental solution of the Helmholtz equation $G(x,y) = \Phi(x,y)$ in \eqref{eq:green:nonhomo}. The feasibility of the definition \ref{def:veryweak:helm} while $n(x)\equiv 1$ follows by $\Phi(x,y) \in W^{1,p}(B_{R_2}) \cap H_{loc}^{1}(\mathbb{R}^d\backslash \bar B_{R_1})$ with $p \in [1, \frac{d}{d-1})$ for any fixed $y \in \Omega$. This can be verified by the asymptotic behaviors of the fundamental solutions $\Phi(x,y)$ and their gradients while $y \rightarrow x$ in $\mathbb{R}^2$ and $\mathbb{R}^3$ \cite{AS} along with the analytic and radiating properties of $\Phi(x,y)$ for $x$ away from the compact $\Omega$ containing the souce $y$ (see Chapter 3 of \cite{CK}).
\end{Remark}
For the Dirichlet-to-Neumann maps of the Helmholtz equation, we refer to \cite{CK}. The 2D case is as follows.
\begin{Remark}
	For any radiating solution $u \in H_{loc}^{1}(\mathbb{R}^d\backslash \bar B_{R_1})$ in $\mathbb{R}^2$
	\[
	u(r, \theta) = \sum_{n \in \mathbb{Z}} \frac{H_{n}^{(1)}(kr)}{H_{n}^{(1)}(kR_2)}\hat{u}_{n} e^{in\theta}, \quad \hat{u}_{n} = \frac{1}{2 \pi} \int_{0}^{2 \pi} u(R_2,\theta)e^{-in\theta}d\theta,
	\]
	$Tu|_{\partial B_{R_2}}$ is defined as
	\begin{equation}\label{eq:bound:dif:u}
	Tu = \sum_{n \in \mathbb{Z}} \frac{k{H_{n}^{(1)}}'(kR_2)}{H_{n}^{(1)}(kR_2)} \hat{u}_{n} e^{in\theta}.
	\end{equation}
\end{Remark}

For the regularity of $u^s(x,y)$, we have the following lemma.
\begin{Lemma}\label{lem:h2:usxy}
The scattering solution $u^s(x,y)$ in \eqref{eq:lip:inte} belongs to $H^2(\Omega)$ for any fixed $y \in \Omega$.
\end{Lemma}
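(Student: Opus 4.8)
\medskip
\noindent\textbf{Proof proposal.}
My plan is to view $u^s(\cdot,y)$ as a Helmholtz volume potential applied to a fixed $L^2$ density and then bootstrap its regularity by elliptic theory. Write $u^s(x,y)=-k^2 V_k\!\big(m\,G(\cdot,y)\big)(x)$, where $V_k\psi(x):=\int_\Omega \Phi(x,z)\psi(z)\,dz$ is the volume potential with the weakly singular Helmholtz kernel $\Phi$ and $m=1-n$. The two things I would establish are: (i) the density $m\,G(\cdot,y)$ lies in $L^2(\Omega)$ and is compactly supported in $\Omega$; and (ii) $V_k$ maps such densities into $H^2$ near $\bar\Omega$. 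Combining (i) and (ii) and restricting to the bounded set $\Omega$ then gives $u^s(\cdot,y)\in H^2(\Omega)$.

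For the preliminary integrability I would first check that $\Phi(\cdot,y)\in L^2(\Omega)$ for fixed $y$: away from $y$ the kernel is analytic and bounded on the bounded set $\Omega$, while near $z=y$ one has $|\Phi(x,y)|\le C|x-y|^{-1}$ in $\mathbb{R}^3$, so $|\Phi(\cdot,y)|^2$ is integrable there since $2<3$, and $|\Phi(x,y)|\le C(1+|\log|x-y||)$ in $\mathbb{R}^2$, which is square integrable near $y$; hence $\Phi(\cdot,y)\in L^2(\Omega)$ and also $m\,\Phi(\cdot,y)\in L^2(\Omega)$ since $m$ is smooth with $\supp m\Subset\Omega$. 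Then I would read the Lippmann--Schwinger equation \eqref{eq:lip:inte} as the operator identity $\big(I+k^2 V_k(m\,\cdot)\big)G(\cdot,y)=\Phi(\cdot,y)$ in $L^2(\Omega)$. The operator $\psi\mapsto V_k(m\psi)$ is a weakly singular integral operator on the bounded domain $\Omega$, hence compact on $L^2(\Omega)$, and by uniqueness of radiating solutions of \eqref{eq:green:nonhomo} (Rellich's lemma together with unique continuation) the Fredholm alternative makes $I+k^2 V_k(m\,\cdot)$ boundedly invertible on $L^2(\Omega)$; therefore $G(\cdot,y)\in L^2(\Omega)$ and $\psi_y:=m\,G(\cdot,y)\in L^2(\Omega)$ with $\supp\psi_y\subseteq\supp m\Subset\Omega$. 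Alternatively one may simply cite that $G(\cdot,y)\in L^2_{loc}$ from the Lippmann--Schwinger theory in \cite{CM,CK}.

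For the regularity step I would extend $\psi_y$ by zero to $\mathbb{R}^d$ and use that $\Phi$ is the fundamental solution of $-\Delta-k^2$: the function $w:=V_k\psi_y=\Phi*\psi_y$ then satisfies $-\Delta w-k^2 w=\psi_y$ in the distributional sense on $\mathbb{R}^d$, with $\psi_y\in L^2(\mathbb{R}^d)$ of compact support. A local application of Young's inequality (the kernel $\Phi$ being locally integrable and $\psi_y$ compactly supported) gives $w\in L^2_{loc}(\mathbb{R}^d)$, whence $\Delta w=-k^2 w-\psi_y\in L^2_{loc}(\mathbb{R}^d)$, and interior elliptic regularity yields $w\in H^2_{loc}(\mathbb{R}^d)$. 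Since $\Omega$ is bounded, $u^s(\cdot,y)=-k^2 w$ restricted to $\Omega$ belongs to $H^2(\Omega)$, as claimed. One could equivalently note that $u^s(\cdot,y)=G(\cdot,y)-\Phi(\cdot,y)$ is a radiating solution of $-\Delta u^s-k^2 n\,u^s=k^2 m\,\Phi(\cdot,y)$ with an $L^2(\Omega)$, compactly supported right-hand side, and conclude by the same interior estimates.

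The step I expect to be the main obstacle is establishing $m\,G(\cdot,y)\in L^2(\Omega)$: one has to place the Lippmann--Schwinger equation in the correct space ($L^2(\Omega)$ rather than $C(\bar\Omega)$, precisely because $\Phi(\cdot,y)$ has a singularity at $y\in\Omega$) and invoke the compactness of the Helmholtz volume potential together with the appropriate uniqueness statement for radiating solutions. These ingredients are classical \cite{CM,CK}; the $L^2$-integrability of $\Phi$ near its singularity and the final elliptic-regularity bootstrap are routine.
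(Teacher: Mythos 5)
Your proposal is correct and follows essentially the same route as the paper: both pass to the Lippmann--Schwinger equation in $L^2(\Omega)$, use the bounded invertibility of $I+k^2\mV_m$ on $L^2(\Omega)$ to get $G(\cdot,y)\in L^2(\Omega)$, and then apply the $L^2(\Omega)\to H^2(\Omega)$ mapping property of the volume potential with kernel $\Phi$. The only difference is that you supply proofs (Fredholm alternative plus Rellich/unique continuation for the invertibility, and an elliptic-regularity bootstrap for the $H^2$ mapping property) of the two facts the paper simply cites from \cite{RP} and Theorem 8.2 of \cite{CK}.
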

\begin{proof}
	Let's define $(\mV_{m} u)(x): = \int_{\Omega} \Phi(x,y)m(y)u(y)dy $ and it is known that $I + k^2 \mV_{m}$ is bounded and invertible from $L^2(\Omega)$ to $L^{2}(\Omega)$ \cite{RP}. Therefore, we can reformulate the equation \eqref{eq:lip:inte} as
	\[
	(I + k^2\mV_{m} )G(x,y) = \Phi(x,y), \quad \forall y \in \Omega.
	\]
	Since $\Phi(x,y) \in L^2(\Omega)$, $\forall y \in \Omega$, we thus get
	\[
	G(x,y) = (I + k^2\mV_{m} )^{-1}\Phi(x,y) \in L^2(\Omega).
	\]
	 Furthermore, by the mapping property of the volume potential with integral kernel $\Phi(x,y)$ which is bounded from $L^2(\Omega)$ to $H^2(\Omega)$ (see Theorem 8.2 of \cite{CK}), we have
	 $u^s(x,y) \in H^2(\Omega)$ since $m(y)G(x,y)$ belongs to $L^2(\Omega)$.
	
	 	We shall verify $G(x,y)$ satisfying equation \eqref{eq:green:nonhomo}. By direct calculation, we obtain
	 	\begin{align*}
	 	-(\Delta_{x} +k^2)G(x,y) &= 	-(\Delta_x +k^2)\Phi(x,y) 	-(\Delta_x +k^2)u^s(x,y),\\
	 		&=-(\Delta_x +k^2)\Phi(x,y) + k^2(\Delta +k^2)\int_{\Omega} \Phi(x,z)m(z)G(z,y)dy, \\
	 	&=\delta(x-y) -k^2m(x)G(x,y),
	 	\end{align*}
	 	where the second equality follows from $u^s(x,y) \in H^2(\Omega)$ and the mapping property of volume potential \cite{CK}. We thus verified $G(x,y)$
	 	is the Green's function of \eqref{eq:green:nonhomo}.
\end{proof}

It is known that $G(x,y) = G(y,x)$ \cite{GM}. Actually, the formal adjoint operator of $-\Delta - k^2 n$ is also itself \cite{CD}, we thus get $u^s(x,y)=u^s(y,x)$ with $u^s(y,x)\in H^{2}(\Omega)$ for any fixed $x\in \Omega$.
Now we turn to the well-posedness of the solution of \eqref{eq:helm} and we will prove the uniqueness, existence and stability consecutively. Before the
discussion of the uniqueness, let's give the following embedding lemma for convenience.
\begin{Lemma}\label{lem:embed:l2}
	For any bounded domain $D \subset  B_{R_{2}}$ with a $C^2$ boundary, the solution $u$ under definition \ref{def:veryweak:helm} belongs to $L^{2}(D)$ for $d=2$ or $d = 3$.
\end{Lemma}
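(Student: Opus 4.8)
The plan is to read off the claim from the Sobolev embedding theorem on $B_{R_2}$, exploiting the flexibility in the exponent $p$ built into Definition~\ref{def:veryweak:helm}. Recall from the Remark that the fundamental solution $\Phi(\cdot,y)$, and hence the very weak solution $u$ which is modelled on it, belongs to $W^{1,p}(B_{R_2})$ for \emph{every} $p\in[1,\frac{d}{d-1})$, not merely for one such $p$. Since $B_{R_2}$ is a ball, it is a bounded $C^\infty$ (in particular Lipschitz) extension domain, so for each $p\in[1,d)$ the continuous embedding $W^{1,p}(B_{R_2})\hookrightarrow L^{p^\ast}(B_{R_2})$ holds with $p^\ast=\frac{dp}{d-p}$. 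Thus $u\in L^{p^\ast}(B_{R_2})$ for the corresponding range of $p^\ast$.

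It then only remains to check that this range of $p^\ast$ contains $2$ in the two relevant dimensions. For $d=2$ one has $\frac{d}{d-1}=2$, and any $p\in[1,2)$ gives $p^\ast=\frac{2p}{2-p}\ge 2$ (indeed $p^\ast\uparrow\infty$ as $p\uparrow 2$); for $d=3$ one has $\frac{d}{d-1}=\frac32$, and choosing $p\in[\frac65,\frac32)$ gives $p^\ast=\frac{3p}{3-p}\ge 2$ (with $p^\ast\uparrow 3$ as $p\uparrow\frac32$). In either case $u\in L^{p^\ast}(B_{R_2})$ with $p^\ast\ge 2$, and since $|B_{R_2}|<\infty$ Hölder's inequality yields $u\in L^2(B_{R_2})$. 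Restricting to any $D\subset B_{R_2}$ gives $u\in L^2(D)$; the $C^2$ regularity of $\partial D$ plays no role in this argument and is only recorded for the subsequent uses of the lemma (trace estimates and elliptic regularity on $D$).

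If one prefers not to invoke the whole scale of $W^{1,p}$ bounds, an equivalent route, once existence is settled, is through the representation $u(x)=\int_{\Omega}G(x,y)\,d\mu(y)$: by the Remark and Lemma~\ref{lem:h2:usxy}, $G(\cdot,y)=\Phi(\cdot,y)+u^s(\cdot,y)$ is bounded in $L^2(B_{R_2})$ uniformly in $y\in\Omega$, whence $\|u\|_{L^2(B_{R_2})}\le\|\mu\|_{\mM(\Omega)}\,\sup_{y\in\Omega}\|G(\cdot,y)\|_{L^2(B_{R_2})}<\infty$ and again $u\in L^2(D)$. The only point requiring care in this second approach is the \emph{uniform}-in-$y$ $L^2$-bound on $\Phi(\cdot,y)$ and on $u^s(\cdot,y)$, which follows from the explicit near-diagonal asymptotics of the fundamental solutions cited in the Remark; along the first, Sobolev-embedding route there is essentially no obstacle beyond bookkeeping the admissible exponents.
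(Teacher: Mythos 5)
Your argument is correct and is essentially the paper's own proof: both rest on the Sobolev embedding $W^{1,p}\hookrightarrow L^{p^*}$ with $p$ chosen in the admissible range $[1,\tfrac{d}{d-1})$ so that $p^*\ge 2$ (the paper applies the compact embedding on $D$ itself, you apply the continuous embedding on $B_{R_2}$ and restrict, which is an immaterial difference). Your observation that the $C^2$ regularity of $\partial D$ is not actually needed for this route is a fair, minor refinement rather than a deviation.
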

\begin{proof}
	For $p \in [1, \frac{d}{d-1})$, by Sobolev compact embedding theorem,
	\[
	W^{1,p}(D) \hookrightarrow\hookrightarrow L^{q}(D),
	\]
	with $1 \leq q < p^*: = \frac{dp}{d-p}$. For $p^* > 2$ and $n=2$, we have $p>1$; for $p^*>2$ and $d=3$, we have $p> \frac{6}{5}$.
	Hence, we can choose $p \in (1, 2)$ in $\mathbb{R}^2$ or $p \in(\frac{6}{5}, \frac{3}{2})$ in $\mathbb{R}^3$, to get
	\[
	W^{1,p}(D) \hookrightarrow\hookrightarrow L^{2}(D).
	\]
	What follows is $u \in L_{loc}^2(\mathbb{R}^d)$ for any bounded $C^2$ subdomain by \eqref{eq:w1p:u:est} for $d= 2$ and $d=3$.
\end{proof}
The definition of the very weak solution of \eqref{eq:helm} belongs to $L_{loc}^2(\mathbb{R}^d)$ coincides with the finite energy of scattered waves from physics.
%We will first discuss the uniqueness of the solution of \eqref{aum:H1} under assumption \eqref{aum:H1}.
 Actually, the solution is unique with definition \ref{def:veryweak:helm}.
\begin{Theorem}\label{lem:unique}
Assuming there exists a solution of \eqref{eq:helm} within the definition \ref{def:veryweak:helm}, the solution is unique.
\end{Theorem}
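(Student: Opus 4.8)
The plan is to let $u_1,u_2$ be two very weak solutions of \eqref{eq:helm}, set $w:=u_1-u_2$, and show $w\equiv 0$. By bilinearity, $a(w,\varphi)=0$ for every $\varphi\in C^{2,\alpha}(B_{R_2})$, and $w\in W^{1,p}(B_{R_2})\cap H_{loc}^1(\mathbb{R}^d\backslash\bar B_{R_1})$; by Lemma~\ref{lem:embed:l2}, $w\in L^2(B_{R_2})$. I would first reduce the claim to showing $w=0$ in $B_{R_2}$. Testing $a(w,\varphi)=0$ against $\varphi\in C_c^\infty(B_{R_2}\backslash\bar B_{R_0})$ (a region where $\mu=0$ and $n\equiv1$, so all boundary terms on $\partial B_{R_2}$ drop out) shows $-\Delta w-k^2w=0$ in the distributional sense on $B_{R_2}\backslash\bar B_{R_0}$; combined with the fact that $w$ is an $H_{loc}^1$ radiating solution of the homogeneous Helmholtz equation in $\mathbb{R}^d\backslash\bar B_{R_1}$, the function $w$ solves the homogeneous Helmholtz equation on the connected open set $\mathbb{R}^d\backslash\bar B_{R_0}$ and is therefore real-analytic there. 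Hence $w=0$ on $B_{R_2}$ propagates by unique continuation to $\mathbb{R}^d\backslash\bar B_{R_0}$, and $w\equiv0$ follows.

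The heart of the argument is a transposition step. Fix an arbitrary $\psi\in C_c^\infty(B_{R_2})$ and let $\phi$ be the unique radiating solution of $-\Delta\phi-k^2n\phi=\psi$ in $\mathbb{R}^d$ (it exists by classical scattering theory, e.g.\ via the Lippmann--Schwinger equation as in Lemma~\ref{lem:h2:usxy}; since $-\Delta-k^2n$ is its own formal adjoint this is exactly the adjoint problem). Because $\psi$ and $n-1$ vanish near $\partial B_{R_2}$, $\phi$ solves the homogeneous Helmholtz equation in a neighbourhood of $\partial B_{R_2}$ and is radiating outside, hence $\partial_\nu\phi=T\phi$ on $\partial B_{R_2}$, and $\phi$ is real-analytic near $\partial B_{R_2}$; together with interior elliptic (Schauder) regularity, $\phi\in C^{2,\alpha}(B_{R_2})$, so $\bar\phi$ is an admissible test function. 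Substituting $\varphi=\bar\phi$ into $a(w,\varphi)=0$ and using $-\Delta\phi-k^2n\phi=\psi$ and $\partial_\nu\phi=T\phi$ gives
\begin{equation*}
0=a(w,\bar\phi)=\int_{B_{R_2}}w\,\psi\,dx-\int_{\partial B_{R_2}}\bigl((Tw)\,\phi-w\,(T\phi)\bigr)\,ds .
\end{equation*}
The boundary integral vanishes because $T$ is symmetric for the bilinear pairing $\langle f,g\rangle=\int_{\partial B_{R_2}}fg\,ds$: in the Fourier basis of \eqref{eq:bound:dif:u} (and the spherical-harmonic basis in $d=3$) the symbol $k(H_n^{(1)})'(kR_2)/H_n^{(1)}(kR_2)$ depends only on $|n|$ (resp.\ on the degree), which forces $\langle Tw,\phi\rangle=\langle w,T\phi\rangle$, first for smooth data and then for $w|_{\partial B_{R_2}}\in H^{1/2}(\partial B_{R_2})$ by density. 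Therefore $\int_{B_{R_2}}w\,\psi\,dx=0$.

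Since $\psi\in C_c^\infty(B_{R_2})$ is arbitrary and $w\in L^2(B_{R_2})\subset L^1_{loc}(B_{R_2})$, we get $w=0$ a.e.\ in $B_{R_2}$, and by the reduction in the first paragraph $w\equiv0$ in $\mathbb{R}^d$, i.e.\ $u_1=u_2$. I expect the main obstacle to be the auxiliary problem: one must be sure that for every smooth $\psi$ the DtN-truncated problem $-\Delta\phi-k^2n\phi=\psi$ in $B_{R_2}$, $\partial_\nu\phi=T\phi$ on $\partial B_{R_2}$, is well-posed with a $C^{2,\alpha}$ solution, which uses the standing non-eigenvalue assumption, Fredholm theory, uniqueness of the radiating exterior solution, and the real-analyticity of $\phi$ away from $\supp\psi\cup\supp(n-1)$. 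The second delicate point is precisely the symmetry of the Dirichlet-to-Neumann map in the bilinear pairing that makes the boundary terms cancel; note that $\bar w$ itself cannot be used as a test function because $w$ is only $W^{1,p}$, which is exactly why the detour through the smooth solution $\phi$ is needed.
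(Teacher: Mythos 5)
Your proof is correct and reaches the conclusion, but the core duality step is genuinely different from the paper's. The paper also forms $u=u_1-u_2$ and finishes with exterior uniqueness, but its transposition step goes through the \emph{interior Dirichlet problem}: it solves $\Delta\varphi+k^2n\varphi=g$ in $B_{R_2}$ with $\varphi|_{\partial B_{R_2}}=0$ (invoking the standing assumption that $0$ is not a Dirichlet eigenvalue of $-\Delta-k^2n$ in $B_{R_2}$), uses surjectivity of $\Delta+k^2n$ onto $C^{0,\alpha}$ to get $\int_{B_{R_2}}ug\,dx=0$ for all H\"older $g$, and then a density argument. You instead solve the \emph{full-space radiating adjoint problem} $-\Delta\phi-k^2n\phi=\psi$, which is always uniquely solvable for real $n$ via the Lippmann--Schwinger equation and needs no eigenvalue hypothesis; the price is that the boundary terms on $\partial B_{R_2}$ do not drop automatically, and you pay it with the observation that $\partial_\nu\phi=T\phi$ there together with the symmetry of the Dirichlet-to-Neumann map under the bilinear (non-conjugate) pairing, so that $\int_{\partial B_{R_2}}(Tw\,\phi-w\,T\phi)\,ds=0$. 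This is a real advantage: in the paper's route the Dirichlet test functions vanish on $\partial B_{R_2}$ but their normal derivatives do not, so the term $\int_{\partial B_{R_2}}u\,\partial_\nu\bar\varphi\,ds$ in $a(u,\varphi)$ still has to be accounted for, whereas your choice of adjoint problem cancels the boundary contribution exactly. Your final step (analyticity/unique continuation of the homogeneous Helmholtz solution on $\mathbb{R}^d\setminus\bar B_{R_0}$) is interchangeable with the paper's appeal to uniqueness of the exterior radiating Dirichlet problem \eqref{eq:helm:boundary}; both, like the paper, implicitly use that the very weak solution is a radiating solution of the homogeneous equation outside $B_{R_1}$. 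The two delicate points you flag --- well-posedness and $C^{2,\alpha}$ regularity of the adjoint problem, and the symmetry of $T$ --- are exactly the right ones, and both hold as you describe.
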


\begin{proof}
Supposing there are two solutions $u_1$ and $u_2$ corresponding to the same measure $\mu$, let's denote $u=u_1-u_2$. Therefore, $u$ belongs to $W^{1,p}(B_{R_2}) \cap H_{loc}^{1}(\mathbb{R}^d\backslash \bar B_{R_1})$ satisfying the following equation with definition \eqref{def:veryweak:helm},
\begin{equation}\label{eq:vari:remain}
\int_{B_{R_2}} -u(\Delta + k^2n(x)) \bar \varphi dx =0,  \quad \forall \varphi \in C_{0}^{2, \alpha}(B_{R_2}).
\end{equation}
For any $g \in C^{0, \alpha}(B_{R_2})$, since $0$ is not a Dirichlet eigenvalue of the operator $-\Delta-k^2n(x)$ in $B_{R_2}$, the following problem is well-posed with a unique solution $\varphi \in C_{0}^{2,\alpha}(B_{R_2})$ (see Chapter 8 of \cite{CK} and $L^2$ case by Theorem 6 in section 6.23 of \cite{LEC})
\begin{equation}
\begin{cases}
\Delta \varphi  + k^2n(x) \varphi = g, \quad x \in B_{R_2}, \\
\varphi|_{\partial B_{R_2}} = 0
\end{cases}
\end{equation}
and there exists a constant $C$ such that for any $g \in C^{0,\alpha}$, we have
\[
\|\varphi\|_{C^{2, \alpha}} \leq C \|g\|_{C^{0,\alpha}}.
\]
What follows is the mapping $\Delta + k^2: C^{2, \alpha} \rightarrow C^{0,\alpha}$ is surjective. With \eqref{eq:vari:remain},  we have
\begin{equation}
\int_{B_{R_2}} ug dx =0,  \quad \forall g \in C^{0, \alpha}(B_{R_2}).
\end{equation}
Furthermore $C_{0}^{\infty}(B_{R_2}) \subseteq C^{0, \alpha}(B_{R_2})$ is dense in $W^{-1,p}(B_{R_2})$ with $1 \leq p < \infty$. We see
\[
u = 0, \quad \text{in} \quad W^{1,p}(B_{R_2}), \ 1 \leq p < \frac{d}{d-1}.
\]
With Lemma \ref{lem:embed:l2}, we have $u=0$ in $L^2(B_{R_{2}})$.
Since $u$ satisfies the homogeneous Helmholtz equation in $B_{R_2}\backslash \bar B_{R_0}$, by the interior estimate (section 6.3 of \cite{LEC}), we have $ u\in H^1(E)$ and $u=0$, where $E$ is chosen such that $B_{R_1+\epsilon}\backslash \bar B_{R_1-\epsilon}\subseteq E$ and $E \Subset B_{R_2}\backslash \bar B_{R_0} $ with small $\epsilon >0$. Furthermore, by the uniqueness of the following exterior scattering problem in $H^{1}_{loc}(\mathbb{R}^d \backslash \bar B_{R_1})$ \cite{CK}
\begin{equation}\label{eq:helm:boundary}
\begin{cases}
\Delta u  + k^2 u = 0, \quad x \in \mathbb{R}^d \backslash \bar B_{R_1}, \\
u|_{\partial B_{R_1}} = 0 \in H^{\frac{1}{2}}(\partial B_{R_1}), \\
\displaystyle{\lim_{|x| \rightarrow \infty} |x|^{\frac{d-1}{2}} (\frac{\partial u}{\partial |x|} - ik u) = 0,}
\end{cases}
\end{equation}
 we have $u=0$ in $W^{1,p}(B_{R_2}) \cap H_{loc}^{1}(\mathbb{R}^d\backslash \bar B_{R_1})$.
%And for the solution in $B_{R_2}$, we separate the equation \eqref{eq:helm} into the following two systems
%\begin{equation}\label{eq:helm:bounded:souce}
%\begin{cases}
%\Delta u_1  + k^2 u_1 = \mu, \quad x \in B_{R_2}, \\
%u_1 = 0, \quad x \in  \partial B_{R_2},
%\end{cases}
%\end{equation}
%and
%\begin{equation}\label{eq:helm:bounded:boundary}
%\begin{cases}
%\Delta u_2  + k^2 u_2 = 0, \quad x \in B_{R_2}, \\
%u_1 = f, \quad x \in  \partial B_{R_2}.
%\end{cases}
%\end{equation}
%Thus $u = u_1 + u_2$ by the linearity of the equation \eqref{eq:helm}. The uniqueness and existence of $H^{1}(B_{R_2})$ solution to \eqref{eq:helm:bounded:boundary} follows from the assumption that $k^2$ is not an eigenvalue of $-\Delta$ in $B_{R_2}$. For equation \eqref{eq:helm:bounded:souce}, the uniqueness and existence of $L^{2}(B_{R_2})$ solution could be seen in Theorem 4.7 of \cite{VS}.
\end{proof}
%\begin{Remark}
%For equation \eqref{eq:helm:bounded:boundary}, it is proved in \cite{TW2} that the existence and uniqueness could be guaranteed with only assumption $f\in L^{p}(\partial B_{R_{2}})$ and $\Im k >0$ for arbitrary Lipschitz domain $\Omega$, see also \cite{TW1} and \cite{MMP}.
%\end{Remark}

Before the discussion of the existence and stability, we will discuss the mapping properties of the volume potential first for preparations. The singularities of the Green's function $G(x,y)$ in \eqref{eq:green:nonhomo} and its gradient play the most important role. For the singularity of the Green function $G(x,y)$, it is known that (see \cite{AN} for the case in $\mathbb{R}^3$ and \cite{CD} for the case in $\mathbb{R}^2$),
\begin{align}
|G(x,y)| &\leq \frac{C}{|x-y|}, \ \ \forall x, y \in  \mathbb{R}^3, \label{eq:green:asy:3d} \\
|G(x,y)| & \leq C |\ln|x-y||, \quad |x-y| \rightarrow 0, \ x, y \in   \mathbb{R}^2. \label{eq:green:asy:2d}
\end{align}
For the gradients of the Green's functions in $\Omega$, we have the following lemma.
\begin{Lemma}\label{lem:funda:gra:esti}
Assuming $n(x)$ being real and $n(x)-1$ being a smooth function with compact support in $\Omega$, we have
\begin{align}
|\nabla_{x} G(x,y)| &\leq \frac{C}{|x-y|^2}, \ \ \forall x \neq y,  x, y \in \Omega \subset  \mathbb{R}^3, \label{eq:green:asy:gra:3d} \\
|\nabla_{x} G(x,y)| & \leq \frac{C}{|x-y|}, \ \ \forall x\neq y, x, y \in  \Omega \subset \mathbb{R}^2. \label{eq:green:asy:gra:2d}
\end{align}
\end{Lemma}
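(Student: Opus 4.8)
The plan is to use the Lippmann--Schwinger splitting $G(x,y)=\Phi(x,y)+u^s(x,y)$ from \eqref{eq:lip:inte} and to estimate the two pieces separately. For the fundamental solution itself, the bounds $|\nabla_x\Phi(x,y)|\le C|x-y|^{-2}$ in $\mathbb{R}^3$ and $|\nabla_x\Phi(x,y)|\le C|x-y|^{-1}$ in $\mathbb{R}^2$ are classical: they follow from the explicit formulas for $\Phi$ together with the behaviour of $H_0^{(1)}$ and its derivative near the origin \cite{AS,CK}. Hence the whole task reduces to showing that the scattered part $u^s(\cdot,y)$ carries a strictly milder singularity, uniformly in $y\in\Omega$ -- in fact that $\nabla_x u^s(\cdot,y)$ stays, up to a logarithm, bounded.

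To make $\nabla_x G$ classically meaningful for $x\ne y$, I would first note that $u^s$ solves $-\Delta u^s-k^2n(x)u^s=-k^2m(x)\Phi(x,y)$ (subtract the equations for $G$ and $\Phi$), whose right-hand side is $C^\infty$ on $\Omega\setminus\{y\}$ because $m=1-n$ is smooth and $\Phi(\cdot,y)$ is analytic away from $y$; interior elliptic regularity then gives $u^s(\cdot,y)\in C^\infty(\Omega\setminus\{y\})$. Next I would differentiate the volume potential in \eqref{eq:lip:inte} under the integral sign,
\begin{equation*}
\nabla_x u^s(x,y)=-k^2\int_{\Omega}\nabla_x\Phi(x,z)\,m(z)\,G(z,y)\,dz,
\end{equation*}
the interchange being legitimate since $\nabla_x\Phi(x,\cdot)$ is still weakly singular and therefore integrable; one justifies it by mollifying $\Phi$ (or excising $B_\varepsilon(x)$) and passing to the limit, exactly as in the mapping properties of volume potentials in \cite{CK}.

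Then I would insert the singularity bounds $|\nabla_x\Phi(x,z)|\le C|x-z|^{-2}$ and, by \eqref{eq:green:asy:3d}, $|m(z)G(z,y)|\le C|z-y|^{-1}$ in $\mathbb{R}^3$, and $|\nabla_x\Phi(x,z)|\le C|x-z|^{-1}$ and, by \eqref{eq:green:asy:2d}, $|m(z)G(z,y)|\le C(1+|\ln|z-y||)$ in $\mathbb{R}^2$ (the asymptotic bounds on $G$ extend to such global bounds since $\Omega$ is bounded and $m$ has compact support). This leaves a double singular integral which is controlled by the standard composition estimate for Riesz-type kernels: splitting $\Omega$ into the region near $x$, the region near $y$, and the remaining far region (a dyadic decomposition in $|x-z|$) shows $\int_{\Omega}|x-z|^{-a}|z-y|^{-b}\,dz\le C$ when $a+b<d$ and $\le C(1+|\ln|x-y||)$ when $a+b=d$, with $C$ independent of $x,y\in\Omega$. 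Applied with $(a,b)=(2,1)$ in $d=3$ (the critical case) and with $a=1$, $b$ any small positive exponent in $d=2$ (using $|\ln t|\le C_\varepsilon t^{-\varepsilon}$; the subcritical case), this gives $|\nabla_x u^s(x,y)|\le C(1+|\ln|x-y||)$ in $\mathbb{R}^3$ and $|\nabla_x u^s(x,y)|\le C$ in $\mathbb{R}^2$. Since $\Omega$ is bounded, $1+|\ln|x-y||\le C|x-y|^{-2}$ in $\mathbb{R}^3$ and $1\le C|x-y|^{-1}$ in $\mathbb{R}^2$, so adding the contribution of $\nabla_x\Phi$ yields \eqref{eq:green:asy:gra:3d} and \eqref{eq:green:asy:gra:2d}.

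The genuinely delicate point is the combination of two things: justifying differentiation under the integral for the weakly singular volume potential, and obtaining the resulting double integral bound \emph{uniformly} in $y\in\Omega$; once the Riesz composition lemma is available, the rest is bookkeeping. A minor additional subtlety is that \eqref{eq:green:asy:3d}--\eqref{eq:green:asy:2d} are only asymptotic as $|z-y|\to0$, which is why one first passes to the dominating global bounds $C|z-y|^{-1}$, respectively $C(1+|\ln|z-y||)$, on the bounded set $\Omega$ before integrating.
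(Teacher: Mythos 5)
Your proposal follows essentially the same route as the paper: split $G=\Phi+u^s$ via the Lippmann--Schwinger equation \eqref{eq:lip:inte}, use the explicit singularity of $\nabla_x\Phi$, differentiate the volume potential, and control the resulting double singular integral by decomposing $\Omega$ into neighborhoods of $x$, of $y$, and the far region. Your version is slightly more careful about justifying differentiation under the integral and obtains the sharper logarithmic bound in the critical case $d=3$ (the paper settles for the cruder $C|x-y|^{-1}$), but both are dominated by $C|x-y|^{-2}$ on the bounded set $\Omega$, so the argument is correct and not materially different.
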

\begin{proof}
	We mainly make use of the Lippmann-Schwinger equation \eqref{eq:lip:inte}. We will first discuss the three-dimensional case.
Actually, the singularity of $G(x,y)$ coincides with the fundamental solution of the Helmholtz equation, since  for $\Phi(x,y) = e^{ik|x-y|}/(4 \pi|x-y|)$ in $\mathbb{R}^3$, we have
\begin{align}
&|\frac{e^{ik|x-y|}}{4\pi|x-y|}| =  \frac{1}{4\pi |x-y|}, \label{eq:estimate:funda3d} \\
&| \nabla_{x} \frac{e^{ik|x-y|}}{4\pi|x-y|}| =| \frac{x-y}{|x-y|^2}(ik|x-y| -1 )\Phi(x,y)| \leq C(\Omega,k) |x-y|^{-2}. \label{eq:estimate:funda:grad:3d}
\end{align}
Now, we turn to the singularity of the gradient of $G(x,y)$.  Henceforth, we assume  $\text{diam}(\Omega) = L$ and $|n(x)| \leq n_{0}$ for both $\mathbb{R}^2$ and $\mathbb{R}^3$.	By the Lippmann-Schwinger equation \eqref{eq:lip:inte} and Lemma \ref{lem:h2:usxy}, we know $u^s(x,y) \in H^2(\Omega)$ for any fixed $y \in \Omega$. Hence, we get
\begin{align}
|\nabla_{x}G(x,y)| &\leq |\nabla_{x}\Phi(x,y)| + |\nabla_{x} u^{s}(x,y)| \notag \\
&\leq C(\Omega,k) |x-y|^{-2} + k^2 \int_{\Omega} |\nabla_{x} \Phi(x,z)|
|m(z)| |G(z,y)|dz \notag \\
&\leq  \frac{ C(\Omega,k)}{ |x-y|^{2}} + \frac{ k^2 n_0 C(k,\Omega)}{ 4 \pi}  \int_{\Omega} \frac{1}{|x-z|^2}\frac{1}{|z-y|}dz. \label{eq:double:inte}
\end{align}
Let's focus on the integral in \eqref{eq:double:inte}. Denoting $r=|x-y|$, we split the domain $\Omega$ into the following three parts
\[
\Omega_1 = B_{\frac{r}{2}}(x)\cap \Omega,\quad \Omega_2 = B_{\frac{r}{2}}(y) \cap \Omega, \quad \Omega_3 = \Omega \backslash (\Omega_1 \cup \Omega_2).
\]	
Denoting $F(x,y,z) = \frac{1}{|x-z|^2}\frac{1}{|z-y|}$, we thus have
\begin{align}
\int_{\Omega} F(x,y,z)dz = \int_{\Omega_1} F(x,y,z)dz + \int_{\Omega_2} F(x,y,z)dz + \int_{\Omega_3} F(x,y,z)dz.
\end{align}	
Let's discuss these integrals in $\Omega_1,\Omega_2, \Omega_3$.
Actually, in $\Omega_1$, noticing
\begin{equation}\label{eq:omega1:ball}
 |y-z| \geq \frac{r}{2} \Rightarrow \frac{1}{|y-z|} \leq \frac{2}{r} = \frac{2}{|x-y|},
\end{equation}
we arrive at
\begin{align*}
\int_{\Omega_1} F(x,y,z)dz &\leq \frac{2}{|x-y|} \int_{\Omega_1}\frac{1}{|x-z|^2}dz  \\
&\leq \frac{2}{|x-y|} \int_{B_{\frac{r}{2}}(x)}\frac{1}{|x-z|^2}dz\leq
\frac{2}{|x-y|}2 \pi^2 |x-y|=4 \pi^2.
\end{align*}
For integral in $\Omega_2$, similarly,
\[
|x-z| \geq \frac{r}{2} \Rightarrow \frac{1}{|x-z|^2} \leq \frac{4}{r^2} = \frac{4}{|x-y|^2},
\]
we get
\begin{align*}
\int_{\Omega_2} F(x,y,z)dz \leq \frac{4}{|x-y|^2} \int_{\Omega_2} \frac{1}{|y-z|}dz
 \leq \frac{4}{|x-y|^2} \int_{B_{\frac{r}{2}}(y)} \frac{1}{|y-z|}dz \leq 2\pi^2.
\end{align*}
For integral in $\Omega_3$, still by \eqref{eq:omega1:ball}, we see
\begin{align*}
\int_{\Omega_3} F(x,y,z)dz & \leq \frac{2}{|x-y|} \int_{\Omega_3}\frac{1}{|x-z|^2}dz \\
& \leq \frac{2}{|x-y|} 4 \pi^2 \int_{\frac{r}{2}}^{L}\frac{1}{r^2}r^2dr= \frac{8 \pi^2 L}{|x-y|}-4 \pi^2.
\end{align*}
Combining the above results, we have
\begin{equation}
\int_{\Omega} F(x,y,z)dz \leq \frac{8 \pi^2 L}{|x-y|} + 2\pi^2 \leq \frac{8 \pi^2 L}{|x-y|} + \frac{2\pi^2 L}{|x-y|} = \frac{10 \pi^2 L}{|x-y|}.
\end{equation}
Together with \eqref{eq:double:inte}, we  obtain
\begin{align*}
|\nabla_{x}G(x,y)| \leq \frac{ C(\Omega,k)}{ |x-y|^{2}} + k^2 n_0 C(k,\Omega) \frac{10 \pi^2 L}{4 \pi|x-y|} \leq \frac{ C(\Omega,k)(4 \pi +k^2 n_0   10 \pi^2 L^2)}{4 \pi |x-y|^{2}},
\end{align*}
which leads to \eqref{eq:green:asy:gra:3d} finally.

	For the $\mathbb{R}^2$ case, since $G(x,y)$ is smooth for $|x - y| \geq \delta$ with arbitrary $\delta >0$ \cite{CD} in $\mathbb{R}^2$, there thus exist constants $C_1$ and $C_2$ such that
    \begin{equation}
|G(x,y)|  \leq C_1 |\ln|x-y|| + C_2,  \quad x, y \in \Omega \subset   \mathbb{R}^2. \label{eq:green:asy:2d:bound}
    \end{equation}
    For $\Phi(x,y) = \frac{i}{4}H_{0}^{(1)}(k|x-y|)$ for $d=2$, by Chapter 9 of \cite{AS} , we have
    \begin{align}
    &\frac{i}{4}H_{0}^{(1)}(k|x-y|) = - \frac{1}{2\pi} \ln{k|x-y|}J_{0}(k|x-y|) + h(k|x-y|), \\
    &\nabla_{x}\frac{i}{4}H_{0}^{(1)}(k|x-y|) = -k \frac{x-y}{|x-y|}H_{1}^{(1)}(k|x-y|) \\
    &=-ki \frac{x-y}{|x-y|}[-\frac{1}{\pi} \frac{2}{k|x-y|}+\frac{2}{\pi} \ln \frac{k|x-y|}{2} J_{1}(k|x-y|) + h_{1}(k|x-y|)],
    \end{align}
    where $h(r)$ and $h_1(r)$ are smooth functions of $r$. By the asymptotic behaviors of Bessel functions $J_{0}(r)\sim 1$ and $J_{1}(r)\sim r/2$ while $r \rightarrow 0$ (see Chapter 9 of \cite{AS}), there exist constants $C_1$ and $C_2$ such that
    \begin{subequations}\label{eq:phi:2d:gra}
    \begin{align}
    &|\frac{i}{4}H_{0}^{(1)}(k|x-y|)| \leq C_1 \ln k|x-y| + \mathcal{O}(1), \\
    &|\nabla_{x}\frac{i}{4}H_{0}^{(1)}(k|x-y|)| \leq C_2 |x-y|^{-1}. 
    \end{align}
    \end{subequations}
   Still with Lippmann-Schwinger integral equation \eqref{eq:lip:inte} and the estimates  \eqref{eq:phi:2d:gra}, we just need to estimate the integral
   \[
   \int_{\Omega} \frac{1}{|z-x|} \ln|y-z|dz.
   \]
    The remaining proof is quite similar to the case in $\mathbb{R}^3$ and we omit here.
\end{proof}
\begin{Theorem}\label{thm:w1p:estimate}
Assuming $\mu \in \mM(\Omega)$, for the following volume potential in $\mathbb{R}^d$
\begin{equation}\label{eq:volume:repre}
w(x) = \mV(\mu)(x) : = \int_{\Omega} G(x,y) d\mu(y),
\end{equation}
we have the following estimates,
\begin{align}
&\|w\|_{L^{p}(\Omega)} \leq C_1 \|\mu\|_{\mM(\Omega)}, \quad 1 \leq p < \frac{d}{d-2}, \quad d \geq 3, \label{eq:lpesimate:3d} \\
&\|w\|_{L^{p}(\Omega)} \leq C_2 \|\mu\|_{\mM(\Omega)}, \quad 1 \leq p < +\infty, \quad d=2, \label{eq:lpestimate:2d}
\end{align}
and
\begin{equation}\label{eq:w1p:u:est}
\|w\|_{W^{1,p}(\Omega)} \leq C_3 \|\mu\|_{\mM(\Omega)}, \quad 1 \leq p < \frac{d}{d-1}, \quad d \geq 2.
\end{equation}
\end{Theorem}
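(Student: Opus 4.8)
The plan is to view $w$ in \eqref{eq:volume:repre} as a continuous superposition of the single-source potentials $G(\cdot,y)$, $y\in\Omega$, and to estimate it through Minkowski's integral inequality against the total variation $|\mu|$: for $1\le p<\infty$,
\[
\|w\|_{L^p(\Omega)}\le\Big\|\int_\Omega|G(\cdot,y)|\,d|\mu|(y)\Big\|_{L^p(\Omega)}\le\int_\Omega\|G(\cdot,y)\|_{L^p(\Omega)}\,d|\mu|(y)\le\Big(\sup_{y\in\Omega}\|G(\cdot,y)\|_{L^p(\Omega)}\Big)\|\mu\|_{\mM(\Omega)},
\]
since $\int_\Omega d|\mu|(y)=\|\mu\|_{\mM(\Omega)}$; in particular, finiteness of the right-hand side gives that $\int_\Omega|G(x,y)|\,d|\mu|(y)<\infty$ for a.e.\ $x$, so $w$ is well defined. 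Likewise, setting $v(x):=\int_\Omega\nabla_x G(x,y)\,d\mu(y)$, the same inequality yields $\|v\|_{L^p(\Omega)}\le(\sup_{y}\|\nabla_x G(\cdot,y)\|_{L^p(\Omega)})\|\mu\|_{\mM(\Omega)}$. Thus all three estimates reduce to uniform-in-$y\in\Omega$ bounds for the $L^p(\Omega)$ norms of $G(\cdot,y)$ and $\nabla_x G(\cdot,y)$, together with the identification of $v$ as the weak gradient of $w$.

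For the zeroth-order norms I would plug in the pointwise singularity bounds. In $\mathbb{R}^3$, \eqref{eq:green:asy:3d} gives $|G(x,y)|^p\le C|x-y|^{-p}$, and since $\Omega$ is bounded, $\int_\Omega|x-y|^{-p}\,dx\le\int_{B_L(y)}|z|^{-p}\,dz=c_3\!\int_0^{L}r^{2-p}\,dr$ (with $L=\mathrm{diam}\,\Omega$) is finite and independent of $y$ exactly when $p<3=\tfrac{d}{d-2}$; the same computation with the Riesz-type bound $|G(x,y)|\lesssim|x-y|^{2-d}$ yields the range $p<\tfrac{d}{d-2}$ for any $d\ge 3$, proving \eqref{eq:lpesimate:3d}. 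In $\mathbb{R}^2$, the bounded form \eqref{eq:green:asy:2d:bound} gives $|G(x,y)|\le C_1|\ln|x-y||+C_2$, and $x\mapsto|\ln|x-y||$ lies in $L^p(\Omega)$ for every $p<\infty$ with norm bounded uniformly in $y\in\Omega$, since a logarithmic singularity is $p$-integrable for all finite $p$; this gives \eqref{eq:lpestimate:2d}.

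For \eqref{eq:w1p:u:est} I would first bound $\|v\|_{L^p(\Omega)}$ in the same way using Lemma \ref{lem:funda:gra:esti}: in $\mathbb{R}^3$, $|\nabla_x G(x,y)|^p\le C|x-y|^{-2p}$ is uniformly integrable over $\Omega$ iff $2p<3$, i.e.\ $p<\tfrac32=\tfrac{d}{d-1}$; in $\mathbb{R}^2$, $|\nabla_x G(x,y)|^p\le C|x-y|^{-p}$ is uniformly integrable iff $p<2=\tfrac{d}{d-1}$. It then remains to check that $v=\nabla w$ in $\mathcal{D}'(\Omega)$: for $\psi\in C_c^\infty(\Omega)^d$,
\[
\int_\Omega v\cdot\psi\,dx=\int_\Omega\Big(\int_\Omega\nabla_x G(x,y)\cdot\psi(x)\,dx\Big)d\mu(y)=-\int_\Omega\Big(\int_\Omega G(x,y)\,\Div\psi(x)\,dx\Big)d\mu(y)=-\int_\Omega w\,\Div\psi\,dx,
\]
where the outer interchanges of integration are Fubini, legitimate thanks to the uniform $L^1$ bounds established above, and the inner equality is the classical integration by parts for the volume potential with a fixed source $y$, valid because $G(\cdot,y)\in W^{1,1}_{loc}(\mathbb{R}^d)$ has distributional gradient $\nabla_x G(\cdot,y)$ (standard for kernels with the singularities of \eqref{eq:green:asy:3d}--\eqref{eq:green:asy:2d} and Lemma \ref{lem:funda:gra:esti}). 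Combining the $L^p$ bounds on $w$ and on $v=\nabla w$ gives \eqref{eq:w1p:u:est}.

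The delicate point is this last step: one must verify that the singular integral defining $v$ is genuinely the distributional gradient of $w$, which needs absolute convergence of the double integrals (so that Fubini applies) and that $\nabla_x G(\cdot,y)$ is the weak — not merely the pointwise — gradient of $G(\cdot,y)$ across the singularity. Both follow from the integrability furnished by the cited estimates, but must be stated carefully since $\mu$ is only a measure and the $\mathbb{R}^2$ bound $|\nabla_x G|\lesssim|x-y|^{-1}$ is only borderline integrable as $p\to 2$; the explicit expansions \eqref{eq:phi:2d:gra} used in the proof of Lemma \ref{lem:funda:gra:esti} already supply exactly what is needed. As an alternative to the weak-derivative argument, one may mollify $\mu$ to obtain $\mu_n\in C_c^\infty(\Omega)$ with $\|\mu_n\|_{L^1}\le\|\mu\|_{\mM(\Omega)}$, establish the (now classical) estimates for $w_n=\mV(\mu_n)$, and pass to the limit via weak compactness in the reflexive space $W^{1,p}(\Omega)$ for $1<p<\tfrac{d}{d-1}$; the direct Minkowski route has the advantage of also covering $p=1$ at once.
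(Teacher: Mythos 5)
Your argument is correct and follows essentially the same route as the paper's proof: Minkowski's inequality for integrals applied to the pointwise kernel bounds \eqref{eq:green:asy:3d}, \eqref{eq:green:asy:2d} and Lemma \ref{lem:funda:gra:esti}, together with the verification (via Fubini and the Du Bois--Raymond lemma) that $\int_\Omega\nabla_x G(x,y)\,d\mu(y)$ is the distributional gradient of $w$. Your treatment of that last interchange is in fact somewhat more explicit than the paper's.
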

\begin{proof}
%Before proving the theorem, we need the estimates about the fundamental solutions and their gradients first. For the Green's function $G(x,y)$ in $\mathbb{R}^3$, by Proposition 6.1 of \cite{AN}, $G(x,y)$ is continuous while $x \neq y$ and there exists a constant $C$ such that
%\begin{equation}\label{eq:funda:3d}
%G(x,y) \leq  \frac{C}{|x-y|}, \quad \forall x, y \in \Omega.
%\end{equation}

%Remembering that $G(x,y) = \Phi(x,y) + u^s(x,y)$ with $u^s(x,y) \in H^2(\Omega \times \Omega)$, we get $u^s(x,y) \subseteq C^{0,\alpha}$ for $\alpha <\frac{1}{2}$ for fixed $y$ in $\mathbb{R}^3$. There exists a constant $M$, such that
%\[
%|u^s(x,y)| \leq M, \quad \forall x, y \in \Omega.
%\]
%This yields
%\begin{align*}
%|G(x,y)| &\leq |\Phi(x,y)| + |u^{s}(x,y)|\leq  \frac{1}{4\pi} \frac{1}{|x-y|} + M \frac{L}{|x-y|}\leq (\frac{1}{4\pi} + ML)  \frac{1}{|x-y|}.
%\end{align*}

We begin with the discussions of the $L^p$ estimates \eqref{eq:lpesimate:3d} and \eqref{eq:lpestimate:2d}. Considering the case $d=3$ first, by \eqref{eq:green:asy:3d}, we have
\begin{align*}
|w(x)| &= |\int_{\Omega} G(x,y)d\mu(y)| \leq \int_{\Omega } |G(x,y)|d |\mu|(y) \leq C \int_{\Omega} |x-y|^{2-d}d|\mu|(y).
\end{align*}
Therefore, the function $|x-y|^{2-d}$ belongs to $L^{p}(\Omega)$ for $1 \leq p < \frac{d}{d-2}$. By the Minkowski's inequality for integrals (see Theorem 6.19 of \cite{GBF}) or Theorem 2.4 of \cite{LL}), we arrive at
\begin{equation}\label{eq:convo:minkow}
\|\int_{\Omega} |x-y|^{2-d}d|\mu|(y)\|_{L^{p}(\Omega)} \leq \||\cdot -y|^{2-d} \|_{L^p(\Omega)} \|\mu\|_{\mM(\Omega)} \leq C(\Omega,p)\|\mu\|_{\mM(\Omega)},
\end{equation}
which leads to \eqref{eq:lpesimate:3d}. For $d=2$, the proof of the estimate \eqref{eq:lpestimate:2d} is similar.
With \eqref{eq:green:asy:2d} and \eqref{eq:green:asy:gra:2d} in Lemma \ref{lem:funda:gra:esti},  there exist two constants only depending on $\Omega$ and $\alpha$ \cite{MV, VS}, such that in $\mathbb{R}^2$
\begin{align}
&|G(x,y)| \leq C_3(\Omega, \alpha) |x-y|^{-\alpha}, \ \alpha >0,\\
&|\nabla_{x}G(x,y)| \leq C_4(\Omega, \alpha) |x-y|^{-1-\alpha}, \ \alpha >0.
\end{align}

 For arbitrary $p \in [1,+\infty)$, choosing $\alpha >0$ such that $\alpha p <2$, we have $|x-y|^{-\alpha} \in L^p(\Omega)$. For the $W^{1,p}$ estimate, let's take the three dimensional case for example. It can be checked that the weak derivative $D^iw$ in the direction $x_i$ exists, and for any $\varphi \in \mD(\Omega)$ belonging to the test function space $\mD(\Omega)$, we have
\[
\int_{\Omega}D^iw \varphi dx = \int_{\Omega} \left(\int_{\Omega}\frac{\partial }{\partial x_i}G(x,y)d \mu(y)\right)\varphi(x)dx.
\]
Thus $D^iw  = \int_{\Omega}\frac{\partial }{\partial x_i}G(x,y)d \mu(y)$ a.e. in the distributional sense with Du Bois-Raymond Lemma. This leads to
\[
|\nabla w| = |\int_{\Omega} \nabla_{x} G(x,y)d\mu(y)| \leq \int_{\Omega} |\nabla_{x}G(x,y)|d |\mu|(y) \leq C \int_{\Omega} |x-y|^{1-d}d|\mu|(y).
\]
Still with the gradient estimate  \eqref{eq:green:asy:gra:3d} in Lemma \ref{lem:funda:gra:esti} and the Minkowski's inequality for integrals,  we have
\[
\|\nabla w\|_{L^{p}(\Omega)} \leq C \|\mu\|_{\mM(\Omega)},\quad 1 \leq p < \frac{d}{d-1}.
\]
For the two dimensional case, the proof is similar and we omit here.
\end{proof}
\begin{Remark}\label{rem:dense:d:M}
Actually $G(x,y)$ is not strictly continuous since the singularity while $y \rightarrow x$.
The potential \eqref{eq:volume:repre} can be understood as follows. For $\mu \in \mM(\Omega)$, there exists a sequence $\{\mu_n \}\in \mD(\Omega)$ with $\mD(\Omega)$ being the test functional space, such that
	\begin{equation}\label{eq:weak:seq:approm:volume}
	\int_{\Omega} G(x,y)\mu_{n}(y) dy  \rightarrow \int_{\Omega} G(x,y) d \mu.
	\end{equation}
This is because  $\mD(\Omega)$ is dense in $\mM(\Omega)$. Since the test functional space $\mD(\Omega) \subseteq C(\Omega)$, we have $\mM(\Omega)=C(\Omega)' \subseteq \mD'(\Omega)$ and $\mD(\Omega)$ is dense in $\mD'(\Omega)$ in the topology of $\mD'(\Omega)$ (see Proposition 9.5 of \cite{GBF}). 
\end{Remark}

If $\mu \in L^1(\Omega)$, there exist some $L^2$ estimates.
%According to Theorem \ref{thm:w1p:estimate}, the following corollary can be obtained easily.

\begin{Remark}
 For $\mu\in L^{1}(\Omega)$ in $\mathbb{R}^2$, the local $L^2$ estimate for the Helmholtz equation can be found in \cite{RV,RU} (see Theorem 5.5 of \cite{RU}).
\end{Remark}
% Here and in the following, we suppose the inhomogeneous acoustic source $\mu$ is compacted supported in $B_{R_{0}}$, i.e.,
%\begin{equation}\label{eq:mu:compact:support}
%\supp \mu \subset\subset B_{R_{0}}, \ \ \text{in distribution sense.}
%\end{equation}

% Denote $R_0 < R_1 < R_2 < R_3$, consider the equation \eqref{eq:helm} inside $B_{R_{3}}\backslash \bar B_{R_1}$. We have the following observations and estimates.
For the volume potential $\mV \mu$, we have the following property.
\begin{Lemma}\label{lem:inter}
The volume potential \eqref{eq:volume:repre} belongs to $H_{loc}^{1}(\mathbb{R}^d\backslash \bar B_{R_1})$. Furthermore, for any bounded $C^2$ domain $D \subset \mathbb{R}^d\backslash \bar B_{R_1}$, we have $\mV(\mu) \in H^2(D)$.
\end{Lemma}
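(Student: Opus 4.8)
The plan is to use that $\Omega$, hence $\supp\mu$, lies at a positive distance from $\mathbb{R}^d\setminus\bar B_{R_1}$, so that on the region of interest $G(x,y)$ is a smooth, uniformly bounded function of $x$; differentiating $w=\mV(\mu)$ under the integral sign then yields even more than claimed, namely that $w$ is $C^\infty$ there. Fix a bounded $C^2$ domain $D$ with $D\subset\mathbb{R}^d\setminus\bar B_{R_1}$; then $\bar D\subset\mathbb{R}^d\setminus B_{R_1}$, and since $\bar\Omega\subset B_{R_0}\Subset B_{R_1}$ is compact, the sets $\bar D$ and $\bar\Omega$ are disjoint, so $d_0:=\mathrm{dist}(\bar D,\bar\Omega)\ge R_1-R_0>0$. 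In particular $|x-y|\ge d_0$ whenever $x\in\bar D$ and $y\in\Omega$, so $x$ stays away from the only point $x=y$ at which $G(\cdot,y)$ is singular.

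Next I would show that $G(\cdot,y)\in C^\infty(\{x:\mathrm{dist}(x,\bar\Omega)\ge d_0\})$ with every $x$-derivative bounded uniformly in $y\in\Omega$. Starting from the Lippmann--Schwinger representation $G(x,y)=\Phi(x,y)+u^s(x,y)$ with $u^s(x,y)=-k^2\int_\Omega\Phi(x,z)m(z)G(z,y)\,dz$ and $m=1-n$: for $x$ with $\mathrm{dist}(x,\bar\Omega)\ge d_0$, every $x$-derivative $\partial_x^\beta\Phi(x,y)$, and likewise $\partial_x^\beta\Phi(x,z)$ for $z\in\Omega$, is bounded by a constant $C_{\beta,d_0,k}$. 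By the proof of Lemma \ref{lem:h2:usxy}, $G(\cdot,y)=(I+k^2\mV_m)^{-1}\Phi(\cdot,y)\in L^2(\Omega)$; since the bounds \eqref{eq:green:asy:3d}--\eqref{eq:green:asy:2d} show $|z-y|^{2-d}$ (resp.\ $|\ln|z-y||$) lies in $L^2(\Omega)$ uniformly in $y\in\Omega$, we obtain $M:=\sup_{y\in\Omega}\|G(\cdot,y)\|_{L^2(\Omega)}<\infty$. Differentiating the integral defining $u^s$ under the integral sign (legitimate, since on $\bar D$ the derivative $\partial_x^\beta[\Phi(x,\cdot)m(\cdot)G(\cdot,y)]$ is dominated by $C_{\beta,d_0,k}\|m\|_{L^\infty(\Omega)}\,|G(\cdot,y)|\in L^1(\Omega)$) and applying the Cauchy--Schwarz inequality gives $|\partial_x^\beta u^s(x,y)|\le k^2 C_{\beta,d_0,k}\|m\|_{L^\infty(\Omega)}|\Omega|^{1/2}M$, hence $|\partial_x^\beta G(x,y)|\le C'_{\beta,d_0}$ for all $x\in\bar D$ and $y\in\Omega$. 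Alternatively one may invoke that $G(\cdot,y)$ is a radiating solution of the homogeneous Helmholtz equation away from $y$, hence real-analytic there, and then apply interior elliptic estimates.

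Finally I would differentiate $w$ under the integral against $\mu$. Since $x\mapsto\partial_x^\beta G(x,y)$ is continuous on $\bar D$ and bounded there uniformly in $y\in\Omega$, the difference quotients of $w$ converge uniformly on $\bar D$, so $w\in C^\infty(\bar D)$ with $\partial_x^\beta w(x)=\int_\Omega\partial_x^\beta G(x,y)\,d\mu(y)$ and $|\partial_x^\beta w(x)|\le C'_{\beta,d_0}\|\mu\|_{\mM(\Omega)}$; if one prefers to avoid working with a measure directly, the same identity follows from the approximation \eqref{eq:weak:seq:approm:volume} in Remark \ref{rem:dense:d:M}. As $D$ is bounded, $w\in C^\infty(\bar D)\subset H^2(D)$, which is the second assertion; and since every compact subset of $\mathbb{R}^d\setminus\bar B_{R_1}$ lies in some such $D$, $w$ is $C^\infty$, in particular $H^1$, on every such compact set, so $w\in H_{loc}^{1}(\mathbb{R}^d\setminus\bar B_{R_1})$. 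The only mildly delicate point is the uniform-in-$y$ control of the $x$-derivatives of $G(\cdot,y)$ away from $\bar\Omega$, which, as explained, reduces to Lemma \ref{lem:h2:usxy} together with the smoothing effect of $\Phi$ on separated arguments.
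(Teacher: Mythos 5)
Your proposal is correct and follows essentially the same route as the paper: both exploit that $x\in \bar D$ stays at positive distance from $\supp\mu\subset\Omega$, so that $G(x,y)$ and its $x$-derivatives are smooth and uniformly bounded there, and then bound $\partial_x^\beta w$ by $C\|\mu\|_{\mM(\Omega)}$. The only difference is that you derive the uniform boundedness of the derivatives explicitly from the Lippmann--Schwinger equation and Lemma \ref{lem:h2:usxy}, whereas the paper simply cites \cite{CD} for this fact; your version is a strictly more detailed account of the same argument.
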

\begin{proof}
Since $G(x,y)$, $\nabla_{x} G(x,y)$, $\frac{\partial^2 G(x,y)}{\partial x_i \partial x_j}$ are smooth functions while $x \in D \subset \mathbb{R}^d\backslash \bar B_{R_1}$ and $y \in \Omega  \Subset B_{R_0}   \Subset  B_{R_1}$, they are uniformly bounded in $D$ \cite{CD}. These yield the existence of a constant $C$, such that
 \[
|G(x,y)| \leq C, \quad |\nabla_{x} G(x,y)| \leq C, \quad |\frac{\partial^2 G(x,y)}{\partial x_i \partial x_j}| \leq C, \ \ x \in D, \ \ y \in \Omega.
\]
These lead to
\begin{align}
&|\mV(\mu)(x)| \leq C \|\mu\|_{\mM(\Omega)}, \quad |\int_{\Omega} \nabla_{x} G(x,y)d\mu(y)| \leq C \|\mu\|_{\mM(\Omega)}, \ \ \forall x \in D, \\
&|\int_{\Omega} \frac{\partial^2 G(x,y)}{\partial x_i \partial x_j}d\mu(y)| \leq C \|\mu\|_{\mM(\Omega)},\ \ \forall x \in D.
\end{align}
What follows is $\mV(\mu) \in H^{2}(D)$ and there exists a constant $c_0$ such that
\begin{equation*}
\|\mV(\mu) \|_{H^{2}(D)} \leq c_0\|\mu\|_{\mM(\Omega)}.
\end{equation*}
\end{proof}
With these preparations, we now turn to the existence of the solution \eqref{eq:helm}. We will construct a ``very" weak solution of \eqref{eq:helm} approximately by more regular functions.
Then we prove the constructed weak solution is indeed the volume potential $\mV \mu$ in Theorem \ref{thm:w1p:estimate}.
%However, it is unclear whether the solution of \eqref{eq:helm} belongs to $H_{loc}^{1}(\mathbb{R}^d\backslash \bar B_{R_1})$ or not.
%The (weak) solution of \eqref{eq:helm} belongs to $H_{loc}^{1}(\mathbb{R}^d\backslash \bar B_{R_1})$ and $k^2$ is not a Dirichlet eigenvalue of $-\Delta$ in $B_{R_2}$.
%\end{assumption}
For the similar results of the Laplace equation, we refer to \cite{RV, VS}.
By the discussion in Remark \ref{rem:dense:d:M}, for $\mu \in \mM(\Omega)$, there exists a sequence $\{\mu_n \}\in \mD(\Omega)$, such that
\begin{equation}\label{eq:weak:seq:approm}
\int_{\Omega} \mu_{n} vdx  \rightarrow \int_{\Omega} v d \mu,
\end{equation}
for any $v\in \mD(\Omega)$. Because $\mD(\Omega)$ is dense in $C(\Omega)$, we also have \eqref{eq:weak:seq:approm} for any $v \in C(\Omega)$. Since $\{\mu_{n}\}$ also belong to $\mM(\Omega)$ as linear functionals on $C(\Omega)$, by uniform bounded principle, the norms of $\mu_n$ are uniformly bounded in $\mM(\Omega)$ norm. Let $u_n$ be the solution of the following scattering problem
\begin{equation}\label{eq:helm:adjacnt}
\begin{cases}
-\Delta u_n  - k^2n(x) u_n = \mu_n, \quad x \in \mathbb{R}^d, \\
\displaystyle{\lim_{|x| \rightarrow \infty} |x|^{\frac{d-1}{2}} (\frac{\partial u_n}{\partial |x|} - ik u_n) = 0.}
\end{cases}
\end{equation}
\begin{Theorem}\label{thm:weaklimit}
There exists a ``very" weak solution $u \in W^{1,p}(B_{R_2}) \cap H^{1}(B_{R_{2}}\backslash \bar B_{R_1})$ of \eqref{eq:helm} as in definition \ref{def:veryweak:helm}. Furthermore, we have $u^n \rightharpoonup u \in  W^{1,p}(B_{R_2}) \cap H^{1}(B_{R_{2}}\backslash \bar B_{R_1})$. Here $p$ belongs to $[1, \frac{d}{d-1}) $ as before.
\end{Theorem}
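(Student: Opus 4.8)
The plan is to obtain $u$ as a weak limit of the classical solutions $u_n=\mV\mu_n$ of \eqref{eq:helm:adjacnt}, using the uniform a priori bounds provided by Theorem \ref{thm:w1p:estimate} and Lemma \ref{lem:inter}, and then to pass to the limit in the variational identity \eqref{eq:define:veryweak}. Since each $\mu_n\in\mD(\Omega)$ is smooth and compactly supported in $\Omega$, the scattering problem \eqref{eq:helm:adjacnt} has a unique classical radiating solution, namely the volume potential $u_n=\mV\mu_n$. The masses $\|\mu_n\|_{\mM(\Omega)}$ being uniformly bounded (by the uniform boundedness principle, as recorded before the statement), Theorem \ref{thm:w1p:estimate} yields, for any fixed $p\in(1,\frac{d}{d-1})$, a uniform bound $\|u_n\|_{W^{1,p}(B_{R_2})}\le C$ (its proof applies verbatim on $B_{R_2}$, since the only singularity of $G(x,y)$ sits on the diagonal inside $\Omega$), while Lemma \ref{lem:inter}, together with the analyticity of $G(x,\cdot)$ for $x$ outside $\bar\Omega$, gives a uniform bound $\|u_n\|_{H^1(B_{R_2}\backslash\bar B_{R_1})}\le C$. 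Both $W^{1,p}(B_{R_2})$ with $1<p<\infty$ and $H^1(B_{R_2}\backslash\bar B_{R_1})$ are reflexive, so a subsequence $u_{n_k}$ converges weakly in each of them to a common limit $u\in W^{1,p}(B_{R_2})\cap H^1(B_{R_2}\backslash\bar B_{R_1})$; the membership $u\in W^{1,q}(B_{R_2})$ for $q\in[1,p]$ is then automatic because $B_{R_2}$ is bounded.

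I would then check that each $u_n$ satisfies $a(u_n,\varphi)=b_{\mu_n}(\varphi)$ for all $\varphi\in C^{2,\alpha}(B_{R_2})$. This is Green's second identity on $B_{R_2}$ applied to $u_n$ and $\bar\varphi$, using that $u_n$ is a radiating solution of the homogeneous Helmholtz equation in $\mathbb{R}^d\backslash\bar B_{R_1}$ (because $\supp(\mu_n)\subset\Omega\Subset B_{R_0}\Subset B_{R_1}$), so that $Tu_n=\partial u_n/\partial\nu$ on $\partial B_{R_2}$. Passing to the limit along the subsequence: the interior integral $\int_{B_{R_2}}(-u_n\Delta\bar\varphi-k^2n(x)u_n\bar\varphi)\,dx$ converges because $u_{n_k}\rightharpoonup u$ in $L^p(B_{R_2})$ (indeed $u_{n_k}\to u$ strongly in $L^2$ by the compact embedding of Lemma \ref{lem:embed:l2}) while $\Delta\bar\varphi$ and $n(x)\bar\varphi$ are bounded; the boundary integral converges because the trace map $H^1(B_{R_2}\backslash\bar B_{R_1})\to H^{1/2}(\partial B_{R_2})$ and the Dirichlet-to-Neumann map $T:H^{1/2}(\partial B_{R_2})\to H^{-1/2}(\partial B_{R_2})$ are bounded, hence $u_{n_k}|_{\partial B_{R_2}}\rightharpoonup u|_{\partial B_{R_2}}$ in $H^{1/2}$ and $Tu_{n_k}\rightharpoonup Tu$ in $H^{-1/2}$, whereas $\bar\varphi|_{\partial B_{R_2}}$ and $\partial\bar\varphi/\partial\nu|_{\partial B_{R_2}}$ are fixed elements of $H^{1/2}$ and $H^{-1/2}$; and finally $b_{\mu_n}(\varphi)=\int_\Omega\bar\varphi\,\mu_n\,dx\to\int_\Omega\bar\varphi\,d\mu=b_\mu(\varphi)$ by \eqref{eq:weak:seq:approm} extended to $C(\Omega)$, since $\bar\varphi\in C(\Omega)$. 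Collecting the three limits gives $a(u,\varphi)=b_\mu(\varphi)$ for every $\varphi\in C^{2,\alpha}(B_{R_2})$, i.e.\ $u$ is a very weak solution of \eqref{eq:helm} in the sense of Definition \ref{def:veryweak:helm}.

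To pass from the subsequence to the full sequence $u_n\rightharpoonup u$, I invoke uniqueness: $(u_n)$ is bounded in the reflexive space $W^{1,p}(B_{R_2})\cap H^1(B_{R_2}\backslash\bar B_{R_1})$, every weakly convergent subsequence has as its limit a very weak solution of \eqref{eq:helm} by the argument above, and Theorem \ref{lem:unique} forces all such limits to equal $u$; the standard subsequence-of-subsequence argument then gives weak convergence of the whole sequence. The step I expect to be most delicate is the limit passage in the Dirichlet-to-Neumann boundary term: one must make sure the uniform $H^1$ control (coming from Lemma \ref{lem:inter}, where in fact an $H^2$ bound is available) genuinely holds up to $\partial B_{R_2}$, and that weak $H^1$ convergence on the shell is enough to carry weak limits through both the trace operator and $T$. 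By comparison the interior term and the right-hand side are routine — the former by the compact Sobolev embedding used in Lemma \ref{lem:embed:l2}, the latter by the weak-$*$ convergence $\mu_n\rightharpoonup\mu$ in $\mM(\Omega)$.
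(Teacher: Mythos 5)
Your proposal is correct and follows essentially the same route as the paper: approximate $\mu$ by smooth $\mu_n$, use the uniform $W^{1,p}(B_{R_2})$ and shell bounds from Theorem \ref{thm:w1p:estimate} and Lemma \ref{lem:inter} to extract a weakly convergent subsequence, pass to the limit in $a(u_n,\varphi)=b_{\mu_n}(\varphi)$, and upgrade to the full sequence via the uniqueness theorem. The only cosmetic difference is that you carry the boundary term through by weak convergence of the traces under the bounded maps, whereas the paper uses the $H^2$ shell bound and compact embeddings to get strong $H^{\pm 1/2}$ convergence; both suffice.
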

 \begin{proof}

Since $\mu_n \in \mathcal{D}(\Omega)$, we have the integral representation $u_n = \int_{B_{R_{2}}} G(x,y)\mu_{n}(y)dy$ \cite{CK}. By Theorem \ref{thm:w1p:estimate} and Lemma \ref{lem:inter}, we see $\{u_n\}$ are bounded in $W^{1,p}(B_{R_2})$ and in $H^{2}(\bar B_{R_2}\backslash B_{R_1})$.  Thus we can choose a subsequence $\{u_{n}^k\}$ of $\{u_{n}\}$ such that it is weakly convergent in $W^{1,p}(B_{R_2})$ with a weak limit $u$, i .e.,
\[
u_{n}^k \rightharpoonup u, \ \text{in} \
 W^{1,p}(B_{R_2}) \ \text{as} \ k \rightarrow +\infty.
\]
 Since the sequence $\{u_{n}^k\}$ are also bounded in $H^{2}(\bar B_{R_2}\backslash B_{R_1})$, we can choose another subsequence  $\{u_{n}^{k'}\}$ of  $\{u_{n}^k\}$ that is weakly convergent in $H^{2}(\bar B_{R_2}\backslash B_{R_1})$ with a weak limit $u_p$, i.e.,
\[
u_{n}^{k'} \rightharpoonup u_p \ \text{in} \  H^{2}(\bar B_{R_2}\backslash B_{R_1}), \quad u_{n}^{k'} \rightharpoonup u, \ \text{in} \
W^{1,p}(B_{R_2}).
\]
Noticing $1\leq p <2$, we have $H^{2}(\bar B_{R_2}\backslash B_{R_1})
\hookrightarrow\hookrightarrow W^{1,p}(\bar B_{R_2}\backslash B_{R_1})$. What follows is
\[
u_{n}^{k'} \rightarrow u_p \ \text{in} \  W^{1,p}(\bar B_{R_2}\backslash B_{R_1}).
\]
By the uniqueness of the weak limit of $u_{n}^{k'}$ in $W^{1,p}(\bar B_{R_2}\backslash B_{R_1})$, we have
\[
u = u_p \ \text{in} \ W^{1,p}(\bar B_{R_2}\backslash B_{R_1}).
\]
 Since {$H^{2}(\bar B_{R_2}\backslash B_{R_1}) \subset  W^{1,p}(\bar B_{R_2}\backslash B_{R_1}) $}, again by the uniqueness of the weak limit, we see
 \[
 u_p = u \ \text{in} \ W^{1,p}(\bar B_{R_2}\backslash B_{R_1}).
 \]

 Now, we claim that there exist $\{u_{n}^{k'} \}$ and $u \in W^{1,p}(B_{R_2})\cap H^{2}(\bar B_{R_2}\backslash B_{R_1})$, such that
\[
u_{n}^{k'} \rightharpoonup u, \quad \text{in} \ W^{1,p}(B_{R_2})\cap H^{2}(\bar B_{R_2}\backslash B_{R_1}).
\]
Since the trace operators
\[
T_{i}:H^{2}(\bar B_{R_2}\backslash B_{R_1})\rightarrow H^{\frac{3}{2}-i}(\partial B_{R_2}), \ i=0,1, \quad T_{0}u_{n}^{k'} = u_{n}^k|_{\partial B_{R_2}}, T_{1}u_{n}^{k'}=\frac{\partial u_{n}^k}{\partial \nu}|_{\partial B_{R_2}},
\]
are linear and bounded, we have $T_{0}u_{n}^{k'} \rightharpoonup T_{0} u$ and $T_{1}u_{n}^{k'} \rightharpoonup T_{1} u$ (see Proposition 2.1.27 of \cite{DM}). By the following compact embedding
\[
W^{1,p}(B_{R_2}) \hookrightarrow\hookrightarrow L^{p}(B_{R_2}), \ \ H^{s}(\partial B_{R_2}) \hookrightarrow\hookrightarrow H^{s-1}(\partial B_{R_{2}}), \quad s = \frac{3}{2}, \frac{1}{2},
\]
we get
\begin{equation}\label{eq:strongly:con}
u_{n}^{k'} \rightarrow u \ \text{in} \ L^p(B_{R_2}), \quad T_{0}u_{n}^{k'} \rightarrow T_{0}u \ \text{in} \  H^{\frac{1}{2}}(\partial B_{R_2}), \quad Tu_{n}^{k'} \rightarrow Tu \ \text{in} \  H^{-\frac{1}{2}}(\partial B_{R_2}).
\end{equation}
Actually, for $u_{n}^{k'}$, it can be verified that for any $\varphi \in C^{2, \alpha}(B_{R_2})$, we obtain
\begin{equation}\label{eq:variational:smooth}
a(u_{n}^{k'}, \varphi) = b_{\mu_{n}^{k'}}(\varphi).
\end{equation}
By \eqref{eq:weak:seq:approm} and the discussion after, we see $b_{\mu_{n}^{k'}}(\varphi) \rightarrow b_{\mu}(\varphi)$. For $a(u_{n}^{k'}, \varphi)$, with the embedding $L^{p}(B_{R_2}) \hookrightarrow L^1(B_{R_2})$,
\begin{align*}
&\lim_{k'\rightarrow \infty}|\int_{B_{R_2}} (k^2 n(x) (u_{n}^{k'}-u) \bar \varphi +(u_{n}^{k'} -u) \Delta \bar \varphi) dx |  \\
& \leq  \lim_{k'\rightarrow \infty}|\int_{B_{R_2}}|(u_{n}^{k'}-u)|dx \left(\|k^2 n(x) \bar \varphi\|_{C^2(B_{R_2})} + \|\Delta \bar \varphi\|_{C^2(B_{R_2})}\right) =0.
\end{align*}
For the boundary integral equations in the definition \ref{def:veryweak:helm}, we have
\begin{align*}
& \lim_{k'\rightarrow \infty} |\int_{\partial B_{R_2}} (Tu_{n}^{k'} -Tu) \bar \varphi - (u-u_{n}^{k'}) \frac{\partial \bar \varphi }{\partial \nu})ds| \\
&\leq \lim_{k'\rightarrow \infty} c (\|Tu_{n}^{k'} -Tu\|_{H^{-\frac{1}{2}}(\partial B_{R_2})} + \|u-u_{n}^{k'}\|_{H^{\frac{1}{2}}(\partial B_{R_2})})\|\varphi \|_{C^2(B_{R_2})} = 0.
\end{align*}
Taking $k' \rightarrow \infty$, what follows is that for all $\varphi \in C^{2,\alpha}(B_{R_2})$, we have
\begin{equation}%\label{eq:variational:smooth}
a(u, \varphi) = b_{\mu}(\varphi),
\end{equation}
 which concludes that $u$ is a very weak solution of \eqref{eq:helm} in $B_{R_2}$. By the uniqueness of the solution $u$ in $W^{1,p}(B_{R_2}) \cap H^{1}(B_{R_{2}}\backslash \bar B_{R_1})$ with Lemma \ref{lem:unique}, every weakly convergent subsequence $\{u_{n}^{k'}\}$ of $\{u_n\}$ must have the same weak limit. These lead to
\[
u_n \rightharpoonup u, \quad \text{in} \ W^{1,p}(B_{R_2})\cap H^{2}(\bar B_{R_2}\backslash B_{R_1}).
\]
\end{proof}

Actually, for the relation between the constructed solution $u$ and the volume potential $w$ in \eqref{eq:volume:repre}. We have the following theorem.
\begin{Theorem}\label{thm:connect:volumeweak}
We have $u=w$ where $w$ is as in \eqref{eq:volume:repre} and $u$ is the weak limit constructed in Theorem \ref{thm:weaklimit}, i.e.,
\begin{equation}
u  = \lim_{n \rightarrow \infty }\int_{B_{R_{2}}}G(x,y)\mu_{n}(y)dy \  {\text{in}  \  L^{p}(B_{R_2}) \cap L^2(B_{R_{2}}\backslash \bar B_{R_1}) , \ p \in [1,\frac{d}{d-1})}.
\end{equation}
%The stability of the solution \eqref{eq:helm} in definition \ref{def:veryweak:helm} is followed by Theorem \ref{thm:w1p:estimate}.
\end{Theorem}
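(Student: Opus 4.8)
The plan is to identify $u$ and $w$ by realizing both as limits of the regularized potentials $u_n=\int_{B_{R_2}}G(x,y)\mu_n(y)\,dy$: the function $u$ is the weak limit of $\{u_n\}$ by construction in Theorem~\ref{thm:weaklimit}, while $w=\mV\mu$ is the limit of $\{u_n\}$ because $\mD(\Omega)$ is dense in $\mM(\Omega)$. The difficulty is that $G(x,y)$ is singular at $y=x$, so one cannot pass to the limit pointwise in $\int G(x,y)\mu_n(y)\,dy\to\int G(x,y)\,d\mu(y)$; everything is therefore routed through smooth test functions, whose potentials are regular enough to be paired with the measure $\mu$.

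First I would upgrade the weak convergence of Theorem~\ref{thm:weaklimit} to strong convergence. Since $u_n\rightharpoonup u$ in $W^{1,p}(B_{R_2})$ and in $H^{2}(\bar B_{R_2}\backslash B_{R_1})$, the compact embeddings $W^{1,p}(B_{R_2})\hookrightarrow\hookrightarrow L^{p}(B_{R_2})$ and $H^{2}(\bar B_{R_2}\backslash B_{R_1})\hookrightarrow\hookrightarrow L^{2}(\bar B_{R_2}\backslash B_{R_1})$ give
\[
u_n\to u\qquad\text{in }L^{p}(B_{R_2})\cap L^{2}(B_{R_2}\backslash\bar B_{R_1}).
\]
In particular $u_n\to u$ in $\mD'(B_{R_2})$, so it suffices to prove $u_n\to w$ in $\mD'(B_{R_2})$; then $u=w$ a.e.\ on $B_{R_2}$ by uniqueness of the distributional limit, and the displayed convergence is precisely the asserted convergence to $w$. (Note that $\{\mu_n\}$ is not a norm-Cauchy sequence in $\mM(\Omega)$, so one cannot simply invoke the bound of Theorem~\ref{thm:w1p:estimate} to get a Cauchy sequence in $W^{1,p}$; the weak-limit route is essential.)

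Next, fix $\varphi\in\mD(B_{R_2})$ and set $\psi(y):=\int_{B_{R_2}}G(x,y)\varphi(x)\,dx$. Using the symmetry $G(x,y)=G(y,x)$ together with the splitting $G=\Phi+u^s$ and the mapping properties of the volume potential employed in Lemma~\ref{lem:h2:usxy}, one gets $\psi\in H^{2}(B_{R_2})$, hence $\psi\in C(\bar\Omega)$ by Sobolev embedding for $d\le 3$. Since $\mu_n\in\mD(\Omega)$ is bounded with compact support and $G$ is locally integrable, Fubini's theorem yields
\[
\int_{B_{R_2}}u_n\,\varphi\,dx=\int_{\Omega}\Big(\int_{B_{R_2}}G(x,y)\varphi(x)\,dx\Big)\mu_n(y)\,dy=\int_{\Omega}\psi\,\mu_n\,dy,
\]
and by \eqref{eq:weak:seq:approm}, extended to continuous test functions, the right-hand side converges to $\int_{\Omega}\psi\,d\mu$. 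A second application of Fubini, legitimate because $\iint_{\Omega\times B_{R_2}}|G(x,y)|\,|\varphi(x)|\,dx\,d|\mu|(y)<\infty$ by the pointwise estimates \eqref{eq:green:asy:3d}--\eqref{eq:green:asy:2d} (which make $\int_{\supp\varphi}|G(x,y)|\,dx$ bounded uniformly in $y\in\Omega$), gives
\[
\int_{\Omega}\psi\,d\mu=\int_{B_{R_2}}\Big(\int_{\Omega}G(x,y)\,d\mu(y)\Big)\varphi(x)\,dx=\int_{B_{R_2}}w\,\varphi\,dx.
\]
Hence $\int_{B_{R_2}}u_n\varphi\,dx\to\int_{B_{R_2}}w\,\varphi\,dx$ for every $\varphi\in\mD(B_{R_2})$, i.e.\ $u_n\to w$ in $\mD'(B_{R_2})$, which with the first step gives $u=w$ and the stated mode of convergence.

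The main obstacle is the rigorous justification of the two Fubini exchanges and of the regularity claim $\psi\in C(\bar\Omega)$. The first exchange is routine because $\mu_n$ is smooth and $G\in L^{1}_{loc}$; the delicate one is the second, where $\mu$ is only a measure, and it relies on the uniform-in-$y$ local integrability of $G(\cdot,y)$ coming from Lemma~\ref{lem:funda:gra:esti} / Theorem~\ref{thm:w1p:estimate}. The continuity of $\psi$ rests on the same volume-potential mapping property $L^2\to H^2$ already used in Lemma~\ref{lem:h2:usxy} combined with $H^2\hookrightarrow C$ in dimension $d\le 3$; once this is in place, $\psi$ is an admissible test function for the weak-$*$ convergence $\mu_n\to\mu$ and the argument closes.
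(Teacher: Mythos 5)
Your argument is correct, but it follows a genuinely different route from the paper's. The paper works directly with the functions $u_n(x)=\int G(x,y)\mu_n(y)\,dy$ and $w(x)=\int G(x,y)\,d\mu(y)$: it reduces to a positive measure via $\mu=\mu^+-\mu^-$, splits the kernel with a cutoff $\phi_\varepsilon$ supported away from the diagonal, passes to the limit pointwise in the far part (where $G\phi_\varepsilon$ is a continuous kernel), controls the near part in $L^1(F)$ on compact sets $F$ by the uniform local integrability $\sup_{y}\int_{|x-y|<\varepsilon}|G(x,y)|\,dx\to 0$, and closes with Fatou's lemma to get $u=w$ a.e. You instead dualize: you test $u_n$ against $\varphi\in\mD(B_{R_2})$, use the symmetry $G(x,y)=G(y,x)$ and Fubini to transfer the pairing onto $\mu_n$ against $\psi=\int G(\cdot,y)\varphi\,dx$, and exploit that $\psi$ is continuous (the potential regularizes the smooth $\varphi$ into $H^2\hookrightarrow C$ for $d\le 3$), so the weak-$*$ convergence $\mu_n\rightharpoonup\mu$ against continuous test functions finishes the identification in $\mD'$. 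Your route avoids the $\varepsilon$-cutoff, the positivity decomposition, and Fatou's lemma entirely, at the price of invoking the symmetry of the Green's function (which the paper records after Lemma~\ref{lem:h2:usxy}) and two Fubini exchanges, both of which you justify correctly via the uniform-in-$y$ local integrability of $G$ from \eqref{eq:green:asy:3d}--\eqref{eq:green:asy:2d}. Your opening step (upgrading the weak convergence of Theorem~\ref{thm:weaklimit} to strong $L^p\cap L^2$ convergence by compact embedding) coincides with the closing remark of the paper's proof, and your caveat that $\{\mu_n\}$ is not norm-Cauchy in $\mM(\Omega)$ correctly identifies why the weak-limit detour is unavoidable. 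One small point worth making explicit if you write this up: the extension of \eqref{eq:weak:seq:approm} from $\mD(\Omega)$ to continuous test functions uses the uniform bound on $\|\mu_n\|_{\mM(\Omega)}$, which the paper establishes via the uniform boundedness principle just before \eqref{eq:helm:adjacnt}.
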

\begin{proof}
%Next, we prove $u=w$ as in \eqref{eq:volume:repre}, i.e.,
%\[
%u = \int_{B_{R_{2}}} \Phi(x,y)d \mu(y) =  \lim_{n \rightarrow \infty }\int_{B_{R_{2}}}\Phi(x,y)\mu_{n}(y)dy.
%\]
The proof is similar to \cite{MV,VS} for the cases of elliptic equations. For completeness, we prove it as follows.
We just prove the case while $\mu \in \mM(\Omega)$ is a positive measure. For general $\mu$, since $\mu = \mu^{+}-\mu^{-}$, the $\mu^{-}$ part could be proved similarly.
Therefore, we can choose the sequence $\{\mu_n:
\mu_n \geq 0\} $. Given $\varepsilon > 0$, let $\phi_{\varepsilon} \in C^{\infty}(\mathbb{R}^d)$ such that
\begin{equation}\label{eq:weak:limit:un:u:lp}
0 \leq \phi_{\varepsilon} \leq 1, \quad \phi_{\varepsilon} = 0, \ \text{in} \ B_{\frac{\varepsilon}{2}}(0), \quad \phi_{\varepsilon}=1 \ \text{in} \ \mathbb{R}^d\backslash B_{\varepsilon}(0).
\end{equation}
Then we have
\begin{align*}
u_n(x) &= \int_{B_{R_2}}G(x,y)\mu_n(y)dy \\
  & = \int_{B_{R_2}}G(x,y)\phi_{\varepsilon}(|x-y|)\mu_n(y)dy + \int_{B_{R_2}}G(x,y)(1-\phi_{\varepsilon}(|x-y|))\mu_n(y)dy \\
  & = u_{n,1}(x) + u_{n,2}(x).
\end{align*}
It can be seen that $G(x,y)\phi_{\varepsilon}(x,y)$ is continuous in $\bar B_{R_2}$ and the weak convergence of $\mu_n$ leads to
\[
u_{n,1}(x): =\int_{B_{R_2}}G(x,y)\phi_{\varepsilon}(|x-y|)\mu_n(y)dy \rightarrow \int_{B_{R_2}}G(x,y)\phi_{\varepsilon}(|x-y|)d \mu(y),\ \ x \in B_{R_{2}}.
\]
This gives
\begin{equation}\label{eq:uminusw}
u(x)-w(x) = -\int_{B_{R_{2}}} G(x,y)(1-\phi_{\varepsilon}(|x-y|))d \mu(y) + \lim_{n \rightarrow \infty}u_{n,2}(x).
\end{equation}
Let $F$ be an arbitrary compact set of $B_{R_{2}}$, $\varepsilon < \frac{1}{4}\text{dist}(F, \partial B_{R_{2}})$ and
\[
F_{\varepsilon}: = \{ x \in \mathbb{R}^d: \ \text{dist}(x,F) < \varepsilon\}.
\]
We see
\[
\int_{F}|u_{n,2}|dx \leq  \int_{B_{R_{2}}}\int_{F}|G(x,y)|(1 - \phi_{\varepsilon}(|x-y|))dx \mu_{n}(y)dy
\leq \int_{B_{R_{2}}}\mu_{n}dy \sup_{y\in F_{\varepsilon}} \int_{|x-y|<\varepsilon} |G(x,y)|dx.
\]
Together with the uniform boundedness of $\|\mu_n\|_{\mM(\Omega)}$, there exists $C_0$, such that $\|\mu_n\|_{\mM(B_{R_{2}})} \leq C_0$. We thus get
\[
\lim_{n \rightarrow \infty} \sup \int_{F} |u_{n,2}|dx \leq C_0 \sup_{y \in F_{\varepsilon}} \int_{|x-y|<\varepsilon}|G(x,y)|dx,
\]
and the last term tends to zeros while $\varepsilon \rightarrow 0$. Similarly, we also have
\[
\lim_{\varepsilon \rightarrow 0}|\int_{B_{R_{2}}} G(x,y)(1-\phi_{\varepsilon}(|x-y|))d \mu(y) | \leq \lim_{\varepsilon \rightarrow 0} \int_{B_{R_{2}}} |G(x,y)|(1-\phi_{\varepsilon}(|x-y|))d \mu(y) =0.
\]
%where $|\mu| = \mu^+ + \mu^-$.
Since
\[
|u(x)-w(x)| = \lim_{n \rightarrow \infty} \left|-\int_{B_{R_{2}}} G(x,y)(1-\phi_{\varepsilon}(|x-y|))d \mu(y) + u_{n,2}(x)\right|,
\]
by Fatou's Lemma, we arrive at
\begin{align*}
0 &\leq |\int_{F}(u-w)dx| \leq \int_{F}|u-w|dx \\
&\leq \int_{B_{R_{2}}} |G(x,y)|(1-\phi_{\varepsilon}(|x-y|))d \mu(y) + \lim_{n\rightarrow \infty} \inf\int_{F}|u_{n,2}|(x)dx,
\end{align*}
where the right-hand side tends to zeros as $\varepsilon \rightarrow 0$. It follows $u=w$ a.e. in arbitrary compact set $F\subset\subset B_{R_{2}}$. Finally, by Du Bois-Raymond Lemma, we see that $u=w$ a.e. in $B_{R_{2}}$.

{Furthermore, together with Theorem \ref{thm:weaklimit}, we know $u^n \rightharpoonup u \in  W^{1,p}(B_{R_2}) \cap H^{1}(B_{R_{2}}\backslash \bar B_{R_1})$. With the compact embedding theorem, we have $u^n \rightarrow u \in  L^{p}(B_{R_2}) \cap L^2(B_{R_{2}}\backslash \bar B_{R_1})$ for $p \in [1,\frac{d}{d-1})$.}

 \end{proof}

The stability of \eqref{eq:helm} follows by Theorem \ref{thm:w1p:estimate} and Theorem \ref{thm:connect:volumeweak}.
\begin{Remark}
	For the solution of \eqref{eq:helm} under definition \eqref{def:veryweak:helm}, we have the following regularity estimate,
	\[
	\|u\|_{W^{1,p}(\Omega)} \leq C_3 \|\mu\|_{\mM(\Omega)}, \quad 1 \leq p < \frac{d}{d-1}, \quad d = 2 \ \text{or} \ d=3.
	\]
	Here $C_3$ is the same as in Theorem \ref{thm:w1p:estimate}.
\end{Remark}

\section{Sparse Regularization and Semismooth Newton Method }\label{sec:sparse:regu:ssn}
\subsection{Sparse Regularization in Measure Space}

Before the discussion of the inverse problem and the corresponding regularization, we will present the uniqueness of the inverse problem with adequate data first.
\begin{Theorem}\label{thm:uniqueness:recon}
	Assuming $u_1, u_2 \in W^{1,p}(B_{R_2}) \cap H_{loc}^{1}(\mathbb{R}^d\backslash \bar B_{R_1})$ with $p \in [1, \frac{d}{d-1})$ are the very weak solutions corresponding to $\mu_1,\mu_2 \in \mathcal{M}(\Omega)$ within the definition \eqref{eq:define:veryweak}, we have $\mu_1 = \mu_2$ if $u_1 = u_2$.
\end{Theorem}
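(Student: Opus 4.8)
The plan is to reduce the statement to a density fact about test functions, using only linearity of the very weak formulation. Put $\mu:=\mu_1-\mu_2\in\mM(\Omega)$ and $u:=u_1-u_2\in W^{1,p}(B_{R_2})\cap H^1_{loc}(\mathbb{R}^d\backslash\bar B_{R_1})$. Since $u_1,u_2$ are very weak solutions for $\mu_1,\mu_2$, subtracting the two identities \eqref{eq:define:veryweak} and using the bilinearity of $a$ and linearity of $b_{(\cdot)}$ in the measure gives $a(u,\varphi)=b_{\mu_1}(\varphi)-b_{\mu_2}(\varphi)=b_\mu(\varphi)$ for every $\varphi\in C^{2,\alpha}(B_{R_2})$. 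The hypothesis $u_1=u_2$ means $u=0$, hence $a(u,\varphi)=a(0,\varphi)=0$, so
\[
b_\mu(\varphi)=\int_{B_{R_2}}\bar\varphi\,d\mu=0\qquad\text{for all }\varphi\in C^{2,\alpha}(B_{R_2}).
\]

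Next I would upgrade this to $\mu=0$ using the strict inclusions $\supp(\mu)\Subset\Omega\Subset B_{R_2}$. Every $\psi\in\mD(\Omega)=C_c^\infty(\Omega)$ extends by zero to a function in $C^\infty(B_{R_2})\subset C^{2,\alpha}(B_{R_2})$, so the displayed identity, applied to these extensions and also to $i\psi$, together with the assumption that $\mu$ is a real measure (allowing one to separate real and imaginary parts), yields $\int_\Omega\psi\,d\mu=0$ for all real $\psi\in\mD(\Omega)$. This says exactly that $\mu$ vanishes as a distribution on $\Omega$; equivalently, since $\mD(\Omega)$ is dense in $C_0(\Omega)$ and $\mM(\Omega)=C_0(\Omega)'$ by the Riesz representation recalled in \eqref{eq:measure:dual}, the functional $\mu$ annihilates a dense subspace of its predual and is therefore the zero measure. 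Hence $\mu_1=\mu_2$.

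There is essentially no hard analytic step in this argument; the forward map $\mu\mapsto u$ being injective is almost formal once one observes that $u$ determines the functional $\varphi\mapsto a(u,\varphi)$, which equals $b_\mu$, which in turn determines $\mu$. The only point that deserves care is that the test class $C^{2,\alpha}(B_{R_2})$ appearing in Definition \ref{def:veryweak:helm} is rich enough to detect all of $\mM(\Omega)$: this is guaranteed precisely by $\supp(\mu)\Subset\Omega\Subset B_{R_2}$, which makes compactly supported smooth functions on $\Omega$ available as admissible test functions (compare Remark \ref{rem:dense:d:M}), and their span is dense in $C_0(\Omega)$. Note in particular that no stability or well-posedness input (Theorems \ref{thm:w1p:estimate}, \ref{thm:weaklimit}, \ref{thm:connect:volumeweak}) is needed here, nor the uniqueness Theorem \ref{lem:unique} for the direct problem.
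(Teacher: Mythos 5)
Your argument is correct and follows essentially the same route as the paper: set $\mu=\mu_1-\mu_2$, use linearity to get $b_\mu(\varphi)=a(u_1-u_2,\varphi)=a(0,\varphi)=0$ for all admissible $\varphi$, and conclude $\mu=0$ by density of smooth test functions in the predual $C_0(\Omega)$. The paper merely spells out the vanishing of the boundary terms of $a$ more laboriously (via interior estimates showing the traces of $u_1-u_2$ vanish on $\partial B_{R_2}$), which your observation that $a(0,\varphi)=0$ subsumes.
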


\begin{proof}
	Denote $\tilde u = u_1 - u_2$ and $\tilde \mu  = \mu_1 - \mu_2$. Noticing the assumption \eqref{eq:nonhomegeneous:souce:support} and $\Omega \Subset  B_{R_1} \Subset  B_{R_2}$, since $\tilde u=0$ in 
	$H_{loc}^{1}(\mathbb{R}^d\backslash \bar B_{R_1})$, we have $\tilde u $ satisfies the homogeneous Helmholtz equation \eqref{eq:helm:boundary}. 
	Still by the interior estimate (section 6.3 of \cite{LEC}), $\tilde u=0 \in C^2(K)$ where $K  =B_{R_2+\epsilon}\backslash \bar B_{R_2-\epsilon}\Subset \mathbb{R}^d \backslash \bar B_{R_1}$ with a small $\epsilon >0$. We thus have 
	\[
	\tilde u|_{\partial B_{R_2}}=0,  \quad T\tilde u|_{\partial B_{R_2}} = 0 \in C(\partial B_{R_2}).
	\]
	Since $\Delta \bar \varphi+ k^2 n(x) \bar \varphi \in C^{0, \alpha}(B_{R_2}) \subset W^{-1, p'}(B_{R_2})$ with $p'$ as the conjugate index of $p$, i.e., $1/p + 1/p' = 1$,  together with $\tilde u =0$ in $W^{1,p}(B_{R_2})$,  we have 
	\[
	b_{\tilde \mu}(\varphi) = \int_{B_{R_2}} \bar \varphi d \tilde \mu  = a(\tilde u, \varphi)  = \int_{B_{R_2}} ( - \Delta \bar \varphi- k^2 n(x)  \bar \varphi) \tilde udx=0, \quad \forall \varphi \in C^{2, \alpha}(\Omega).
	\]
	We thus have $\tilde \mu = 0$, since $C^{2, \alpha}(\Omega)$ is dense in $C(\Omega)$. 
	%	By the uniqueness of the exterior scattering  the sound soft obstacle, we have $\tilde u = 0$ in $\mathbb{R}^d \backslash \bar B_{R_1}$. 
\end{proof}
For the inverse source problem, because of the following non-radiating source which is the kernel for the source to far fields mapping \cite{JS},
\[
K = \overline{\{g| g = (\Delta + k^2)\varphi, \ \varphi\in C_{0}^{\infty}(\mathbb{R}^d) \}},
\]
there are no uniqueness for the inverse scattering  with far fields except the point sources \cite{BC}.
However, we can get certain uniqueness with adequate scattering field inside a large and bounded domain containing the sources with  Theorem \ref{thm:uniqueness:recon}, which we still denote the corresponding domain as $\Omega$ for convenience.

In order to reconstruct the sparse source $\mu \in \mM(\Omega)$ numerically, we will make use of the following sparse regularization functional,
\begin{equation}\label{eq:sparse:functional}
\min_{\mu \in \mM(\Omega)}\frac{1}{2}\|\mV \mu - u\|_{L^{2}( \Omega)}^2 + \alpha\|\mu\|_{\mM( \Omega)},
\end{equation}
where $\alpha$ is the regularization parameter and $u:=u_{0}^s$ is the measured scattered fields. $\mV \mu$ satisfies equation \eqref{eq:helm} as discussed. We choose $L^2$ norm in the data term of \eqref{eq:sparse:functional}, since $\mV \mu \in L^2(\Omega)$ with Lemma \ref{lem:embed:l2}.

{For the existence of a solution for \eqref{eq:sparse:functional}, we have the following theorem. With the weakly sequentially compactness of $\mM(\Omega)$, the proof is standard and we refer to \cite{KB}. }
\begin{Theorem}\label{thm:existence:ori}
	There exists a solution $\mu \in \mM(\Omega)$ of the regularization functional \eqref{eq:sparse:functional}.
\end{Theorem}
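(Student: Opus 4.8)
The plan is to run the direct method of the calculus of variations on the objective functional $J(\mu):=\tfrac12\|\mV\mu-u\|_{L^2(\Omega)}^2+\alpha\|\mu\|_{\mM(\Omega)}$, which is well defined because $\mV\mu\in L^2(\Omega)$ for every $\mu\in\mM(\Omega)$ by Theorem \ref{thm:w1p:estimate} together with Lemma \ref{lem:embed:l2}. Since $J\ge 0$ and $J(0)=\tfrac12\|u\|_{L^2(\Omega)}^2<\infty$, the infimum $m:=\inf_{\mu\in\mM(\Omega)}J(\mu)$ is a finite nonnegative number; I would pick a minimizing sequence $\{\mu_n\}\subset\mM(\Omega)$ with $J(\mu_n)\to m$. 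From $\alpha\|\mu_n\|_{\mM(\Omega)}\le J(\mu_n)$ and $J(\mu_n)\to m$ one obtains that $\{\mu_n\}$ is bounded in $\mM(\Omega)$. Because $\mM(\Omega)=C_0(\Omega)'$ is the dual of the separable Banach space $C_0(\Omega)$, the Banach--Alaoglu theorem (as recalled below \eqref{eq:measure:dual}) yields a subsequence, still denoted $\{\mu_n\}$, and a $\bar\mu\in\mM(\Omega)$ with $\mu_n\rightharpoonup^{*}\bar\mu$. The claim is that $\bar\mu$ is a minimizer.

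The remaining work is to show that $J$ is sequentially weak-$*$ lower semicontinuous along $\{\mu_n\}$. For the regularization term this is the general fact that a dual norm is weak-$*$ lower semicontinuous, so $\|\bar\mu\|_{\mM(\Omega)}\le\liminf_n\|\mu_n\|_{\mM(\Omega)}$. For the fidelity term I would first invoke Theorem \ref{thm:w1p:estimate}: boundedness of $\{\mu_n\}$ in $\mM(\Omega)$ forces $\{\mV\mu_n\}$ to be bounded in $W^{1,p}(\Omega)$ for $p\in[1,\tfrac{d}{d-1})$, hence, by the compact Sobolev embedding $W^{1,p}(\Omega)\hookrightarrow\hookrightarrow L^2(\Omega)$ used in Lemma \ref{lem:embed:l2}, relatively compact in $L^2(\Omega)$. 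Passing to a further subsequence, $\mV\mu_n\to v$ strongly in $L^2(\Omega)$ for some $v\in L^2(\Omega)$, and one identifies $v=\mV\bar\mu$: for $\varphi\in C_c^\infty(\Omega)$, Fubini's theorem and the symmetry $G(x,y)=G(y,x)$ give
\[
\int_\Omega(\mV\mu_n)(x)\,\varphi(x)\,dx=\int_\Omega(\mV\varphi)(y)\,d\mu_n(y)\longrightarrow\int_\Omega(\mV\varphi)(y)\,d\bar\mu(y)=\int_\Omega(\mV\bar\mu)(x)\,\varphi(x)\,dx ,
\]
where the convergence uses that $\mV\varphi$ is continuous on $\bar\Omega$ (by the mapping properties of the volume potential of a smooth, compactly supported density), hence an admissible test element for the weak-$*$ convergence of $\{\mu_n\}$. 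By density of $C_c^\infty(\Omega)$ in $L^2(\Omega)$ this forces $v=\mV\bar\mu$ a.e.\ in $\Omega$, so that $\|\mV\bar\mu-u\|_{L^2(\Omega)}=\lim_n\|\mV\mu_n-u\|_{L^2(\Omega)}$.

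Combining the two facts,
\[
m=\lim_n J(\mu_n)\ge\tfrac12\|\mV\bar\mu-u\|_{L^2(\Omega)}^2+\alpha\|\bar\mu\|_{\mM(\Omega)}=J(\bar\mu)\ge m ,
\]
so $J(\bar\mu)=m$ and $\bar\mu\in\mM(\Omega)$ solves \eqref{eq:sparse:functional}. The coercivity of the $\mM(\Omega)$-norm term and the weak-$*$ lower semicontinuity of that norm are routine; I expect the only delicate point — and the reason Theorem \ref{thm:w1p:estimate} is needed — to be the weak-$*$-to-strong-$L^2$ continuity of the forward operator $\mV$, i.e.\ the identification of the limit $v$ with $\mV\bar\mu$, which rests on the mapping property of the volume potential and on the symmetry of the background Green's function.
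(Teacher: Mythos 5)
Your argument is correct and is precisely the standard direct-method proof that the paper invokes without writing out (it only notes the weak-$*$ sequential compactness of $\mM(\Omega)$ and refers to \cite{KB}): coercivity from the $\alpha\|\cdot\|_{\mM(\Omega)}$ term, Banach--Alaoglu, weak-$*$ lower semicontinuity of the dual norm, and identification of the limit of $\mV\mu_n$ via the bounds of Theorem \ref{thm:w1p:estimate}, the compact embedding of Lemma \ref{lem:embed:l2}, and the duality/symmetry of $G$. The one point to tidy is that $\mV\varphi$ for $\varphi\in C_c^\infty(\Omega)$ is continuous on $\bar\Omega$ but need not vanish on $\partial\Omega$, hence is not literally an admissible test function for weak-$*$ convergence in $C_0(\Omega)'=\mM(\Omega)$; one should either pass to $\mM(\bar\Omega)=C(\bar\Omega)'$ or insert a cutoff, a looseness the paper itself tolerates (cf.\ Remark \ref{rem:dense:d:M}).
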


%\begin{proof}
%	The proof is similar to \cite{CLA1} and \cite{KB}.  Since the energy in \eqref{eq:sparse:functional} is $\frac{1}{2}\|u_{0}^s\|_{2}^2$ while $\mu=0$. Thus we can find a minimizing sequence $\{\mu_n\}$ in $\mM(\Omega)$ which are bounded by $\frac{1}{2\alpha}\|u_{0}^s\|_{2}^2$.
%	Since $\mM(\Omega)$ is a weakly sequentially compact \cite{Bre} (see Chapter 4), there exists a weakly convergent subsequence $\mu_{n,k}$ converging to $\mu^* \in \mM(\Omega)$ weakly.
%	
%	Denoting $u_{n,k} = \mV(\mu_{n,k})$, we see $u_{n,k} \in W^{1,p}(\Omega)$ with $p < \frac{d}{d-1}$. By Theorem \ref{thm:connect:volumeweak}, $u_{n,k}$ weakly converges to $\mV(\mu^*)$ and $\mV(\mu^*)$ is a solution \eqref{eq:helm} within definition \ref{def:veryweak:helm}. By the weak lower
%	semicontinuity of the norms in $L^2(\Omega)$ and $\mM(\Omega)$, we conclude that $\mu^*$ is a minimizer of \eqref{eq:sparse:functional} and the existence follows.
%\end{proof}
For the non-smooth minimization problem \eqref{eq:sparse:functional}, the functional does not have semismooth Newton derivative because of $\|\cdot\|_{\mM}$ norm. {To this end, it is convenient to consider the predual problem under the powerful Fenchel duality theory; see \cite{KB, KK, CLA1, CLA2, CLA3} for various inverse problems and optimal control problems including the elliptic problems with real-valued solutions. Semismooth Newton method can be employed for computing the dual problems efficiently. However, the problem \eqref{eq:sparse:functional} is with complex-valued function. For the use of Fenchel duality theory, we need to reformulate the complex-valued operators and functions into real matrix opertors and real vector functions. 
%We consider the following model
%\begin{equation}\label{eq:sparse:functional:real}
%\min_{\mu \in \mM(\Omega)}\frac{1}{2}\|\mV \mu -  u\|_{L^{2}(\Omega_0)}^2 + \alpha\|\mu\|_{\mM(\Omega_0)}. \tag{P}
%\end{equation}
Let's denote $ \mD: = \mV^{-1}$ and  
\begin{subequations}\label{eq:vectorize:matrixization}
\begin{align}
&\mV = \mV_{R} + i\mV_{I}, \quad  \mD = \mD_{R} + i\mD_{I}, \quad u = u_{R} + iu_{I}, \\
&V   = \begin{pmatrix}
\mV_{R} & -\mV_{I} \\
\mV_{I} & \mV_{R}
\end{pmatrix}, \ \ 
D   = \begin{pmatrix}
\mD_{R} & -\mD_{I} \\
\mD_{I} & \mD_{R}
\end{pmatrix}, \ \ 
\zeta = \begin{pmatrix}
\mu_{R}  \\\mu_{I}
\end{pmatrix}, \ \  
U = \begin{pmatrix}
u_{R}  \\u_{I}
\end{pmatrix},
\end{align}	
\end{subequations}
where $\mV_{R} = \Re (\mV)$, $\mD_{R} = \Re (\mD)$, $\mV_{I} = \Im (\mV)$, $\mD_{I} = \Im (\mD)$ and $u_R =\Re(u)$, $\mu_R = \Re(\mu)$, $u_I = \Im(u)$, $\mu_I = \Im(\mu)$. 
It can be directly checked that 
\begin{equation}\label{eq:inverse:D:V}
\mV \mD = I  \Leftrightarrow VD = \text{Diag}[I,I], \quad \mD \mV = I  \Leftrightarrow DV = \text{Diag}[I,I], \ \ \text{and} \ \ D = V^{-1}.  
\end{equation}
Let's consider the following problem 
\begin{equation}\label{eq:sparse:functional:real}
\min_{\zeta \in \mM(\Pi)}\frac{1}{2}\|V \zeta -  U\|_{L^{2}(\Pi)}^2 + \alpha\|\zeta\|_{\mM(\Pi)}, \quad \Pi = \Omega \times \Omega,  \tag{P}
\end{equation}
where $\|\zeta\|_{\mM(\Pi)}$ is defined by (see Corollary 1.55 of \cite{AFP})
\begin{equation}\label{def:vector:randon}
\|\zeta\|_{\mM(\Pi)}: = \|\mu_R\|_{\mM(\Omega)} + \|\mu_I\|_{\mM(\Omega)}.
\end{equation}
Actually, we have the following connection between \eqref{eq:sparse:functional} and \eqref{eq:sparse:functional:real}.
\begin{Proposition}
	The variational functional \eqref{eq:sparse:functional} and \eqref{eq:sparse:functional:real} are equivalent for complex-valued $\mu = \mu_{R} + i\mu_{I}$ within the definitions \eqref{def:vector:randon} and $\|\mu\|_{\mM(\Omega)}: = \|\mu_R\|_{\mM(\Omega)} + \|\mu_I\|_{\mM(\Omega)}$.
\end{Proposition}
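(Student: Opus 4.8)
The plan is to verify directly that, under the bijective correspondence $\mu = \mu_R + i\mu_I \in \mM(\Omega) \leftrightarrow \zeta = (\mu_R,\mu_I)^{\top} \in \mM(\Pi)$, the two objective functionals in \eqref{eq:sparse:functional} and \eqref{eq:sparse:functional:real} take the same value at corresponding arguments; equivalence of the two minimization problems (same infimum, minimizers in one-to-one correspondence) then follows at once. First I would record that a complex Radon measure $\mu$ belongs to $\mM(\Omega)$ if and only if its real and imaginary parts $\mu_R,\mu_I$ are real Radon measures in $\mM(\Omega)$, so that $\mu \mapsto \zeta$ is a bijection between the feasible sets of the two problems. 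The regularization terms then match with nothing left to prove: by the stated conventions $\|\mu\|_{\mM(\Omega)} := \|\mu_R\|_{\mM(\Omega)} + \|\mu_I\|_{\mM(\Omega)}$ and \eqref{def:vector:randon} one has $\|\mu\|_{\mM(\Omega)} = \|\zeta\|_{\mM(\Pi)}$.

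For the data fidelity term I would expand the complex product. Since $\mV$ is $\mathbf C$-linear with $\mV = \mV_R + i\mV_I$, we get $\mV\mu = (\mV_R\mu_R - \mV_I\mu_I) + i(\mV_I\mu_R + \mV_R\mu_I)$, i.e.\ $\Re(\mV\mu)$ and $\Im(\mV\mu)$ are precisely the two components of $V\zeta$ for the block matrix $V$ defined in \eqref{eq:vectorize:matrixization}. Writing $u = u_R + iu_I$, the pointwise identity $|\mV\mu - u|^2 = (\Re(\mV\mu) - u_R)^2 + (\Im(\mV\mu) - u_I)^2$ holds a.e.\ in $\Omega$; integrating over $\Omega$ and using $U = (u_R,u_I)^{\top}$ yields $\|\mV\mu - u\|_{L^2(\Omega)}^2 = \|V\zeta - U\|_{L^2(\Pi)}^2$, where the right-hand side is read as the sum of the $L^2(\Omega)$-norms of the two components, consistent with the $L^2(\Pi)$ notation with $\Pi = \Omega\times\Omega$. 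Here one should note that $\mV\mu \in L^2(\Omega)$, so that the data term is finite, which is guaranteed by Lemma \ref{lem:embed:l2} together with Theorem \ref{thm:connect:volumeweak}.

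Combining the two computations shows that the objective of \eqref{eq:sparse:functional} at $\mu$ equals the objective of \eqref{eq:sparse:functional:real} at $\zeta$; since the correspondence is a bijection of feasible sets, the infima agree and $\mu^{\star}$ minimizes \eqref{eq:sparse:functional} if and only if $\zeta^{\star} = (\mu_R^{\star},\mu_I^{\star})^{\top}$ minimizes \eqref{eq:sparse:functional:real}. I do not expect any genuine obstacle: the statement is essentially an algebraic bookkeeping identity once the total variation of a complex measure is \emph{defined} as the $\ell^1$-sum of the total variations of its real and imaginary parts. The only points needing a little care are that this is a \emph{choice} of norm (the classical total variation of a complex measure is not this $\ell^1$-sum, though it is equivalent to it), and that the real block matrix $V$ in \eqref{eq:vectorize:matrixization} genuinely represents the complex operator $\mV$ under vectorization — both of which are settled by the elementary expansions above, and analogously for $D = V^{-1}$ via \eqref{eq:inverse:D:V}.
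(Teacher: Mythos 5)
Your proof is correct and follows essentially the same route as the paper's: expand $\mV\mu - u$ into real and imaginary parts, identify these with the two components of $V\zeta - U$, and match the regularization terms by the stated definitions of the norms. The extra observations you add (the bijection of feasible sets, finiteness of the data term, and the caveat that the $\ell^1$-sum convention for $\|\mu\|_{\mM(\Omega)}$ is a choice rather than the classical total variation) are sound and only make the argument more explicit than the paper's one-line verification.
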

\begin{proof}
Since 
\begin{align*}
\mV \mu  - u &= (\mV_R \mu_R - \mV_I \mu_I - u_R) +i (\mV_I \mu_R + \mV_R \mu_I - u_I), \\
V \zeta -  U & = [\mV_R \mu_R - \mV_I \mu_I - u_R, \mV_I \mu_R + \mV_R \mu_I - u_I]^{T},
\end{align*}
together with the assumption, we get this proposition. 
\end{proof}
\begin{Remark}
	Although the functional \eqref{eq:sparse:functional} is not fully equivalent to \eqref{eq:sparse:functional:real} for real $\mu$ with $\mu_I=0$, our uniqueness result
	Theorem \ref{thm:uniqueness:recon} still work for the complex-valued $\mu$. The real source can also be reconstructed theoretically. The numerical reconstructions in the following numerical part still work very well. 
\end{Remark}
}
Now, we get a predual problem of \eqref{eq:sparse:functional:real} as the following lemma.   
\begin{Lemma}
	The predual problem of \eqref{eq:sparse:functional:real} can be
%	\begin{equation}\label{eq:predual:real}\tag{D}
%	\min_{y \in H^{2} (\Omega)} \frac{1}{2}\|\mD^*y + u_{R}\|_{2}^2 - \frac{1}{2}\|u_{R}\|_{2}^2, \quad \|y\|_{C_0} \leq \alpha.
%	\end{equation}
	\begin{equation}\label{eq:predual:real}\tag{D}
\min_{y = [y_1,y_2]^{T} \in H^{2} (\Pi)} \frac{1}{2}\|D^*y + U\|_{2}^2 - \frac{1}{2}\|U\|_{2}^2, \quad \|y\|_{C_0(\Pi)} \leq \alpha.
\end{equation}	
\end{Lemma}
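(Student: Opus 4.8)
The plan is to obtain \eqref{eq:predual:real} as the Fenchel predual of \eqref{eq:sparse:functional:real}, exploiting that $\mM(\Pi)=C_0(\Pi)'$ is a dual space, so that $C_0(\Pi)$ --- and its subspace $H^2(\Pi)$ --- is the natural predual variable; the tool is the Fenchel--Rockafellar duality theorem as used in \cite{KB, CLA1}, now applied through the real $\Re/\Im$ reformulation \eqref{eq:vectorize:matrixization}. First I would write \eqref{eq:sparse:functional:real} as $\inf_{\zeta\in\mM(\Pi)}\{F(V\zeta)+G(\zeta)\}$, with $F(v)=\frac12\|v-U\|_{L^2(\Pi)}^2$ on $L^2(\Pi)$ (finite since $V\zeta\in L^2(\Pi)$ by Lemma \ref{lem:embed:l2} and Theorem \ref{thm:w1p:estimate}) and $G=\alpha\|\cdot\|_{\mM(\Pi)}$, and compute the conjugates. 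Completing the square gives $F^*(q)=\frac12\|q+U\|_{L^2(\Pi)}^2-\frac12\|U\|_{L^2(\Pi)}^2$, which supplies the constant $-\frac12\|U\|^2$ in \eqref{eq:predual:real}; and since, by the Riesz representation \eqref{eq:measure:dual} together with \eqref{def:vector:randon}, $\alpha\|\cdot\|_{\mM(\Pi)}$ is the support functional of the $\alpha$-ball of $C_0(\Pi)$ (the maximum-type norm on $C_0(\Pi)=C_0(\Omega)\times C_0(\Omega)$ being dual to the sum norm \eqref{def:vector:randon}), the conjugate $G^*$ is the indicator of $\{w:\|w\|_{C_0(\Pi)}\le\alpha\}$. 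Using \eqref{eq:vectorize:matrixization} one also checks that the real transpose $V^*$ realizes the adjoint of $V$ for the real $L^2$ pairing and maps $L^2(\Pi)$ boundedly into $H^2(\Pi)$ by the volume-potential mapping property (Theorem \ref{thm:w1p:estimate}, Lemma \ref{lem:inter}). The Fenchel dual of \eqref{eq:sparse:functional:real} is then the maximization of $-\frac12\|q+U\|_{L^2(\Pi)}^2+\frac12\|U\|_{L^2(\Pi)}^2$ over $q\in L^2(\Pi)$ subject to $\|V^*q\|_{C_0(\Pi)}\le\alpha$.

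The decisive step is the substitution $y=V^*q$, equivalently $q=D^*y$. From the invertibility relations $VD=DV=\text{Diag}[I,I]$ in \eqref{eq:inverse:D:V} one obtains $D^*=(V^*)^{-1}$, so, together with the mapping property above, $V^*$ is a bounded bijection of $L^2(\Pi)$ onto $H^2(\Pi)$. The substitution converts the constraint $\|V^*q\|_{C_0(\Pi)}\le\alpha$ into $\|y\|_{C_0(\Pi)}\le\alpha$ and the objective into $-\frac12\|D^*y+U\|_2^2+\frac12\|U\|_2^2$, and negating the concave maximization produces exactly \eqref{eq:predual:real}. That \eqref{eq:predual:real} is a genuine predual --- its Fenchel dual recovering \eqref{eq:sparse:functional:real} up to the harmless reflection $\mu\mapsto-\mu$ --- and that it is solvable then follow from the Fenchel qualification condition, which holds trivially because $F$ is finite and continuous on all of $L^2(\Pi)$; one may take $\zeta=0$.

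I expect the real difficulty to be functional-analytic rather than computational. Since $\mM(\Pi)=C_0(\Pi)'$ is not reflexive and $D=V^{-1}$ is unbounded as a map into $\mM(\Pi)$, care is needed to justify, when Fenchel--Rockafellar is run on \eqref{eq:predual:real} with $\Lambda=D^*$, that $\Lambda^*$ acts as the distributional operator $D$; that the boundary contributions arising when one integrates $(q,D^*y)_{L^2(\Pi)}$ by parts vanish on the relevant spaces (where the Dirichlet-type normalization of $B_{R_2}$ and the structure of $H^2(\Pi)$ enter); and --- most importantly --- that the range identity $\{q:Dq\in\mM(\Pi)\}=V(\mM(\Pi))$ holds, which is exactly where Theorem \ref{thm:w1p:estimate} and the invertibility \eqref{eq:inverse:D:V} are indispensable, since they guarantee both that $V\eta\in L^2(\Pi)$ for every $\eta\in\mM(\Pi)$ and that every admissible $q$ is of this form. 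A minor but genuine point is to confirm that, after the $\Re/\Im$ vectorization, the sum norm \eqref{def:vector:randon} on $\mM(\Pi)$ and the constraint norm $\|\cdot\|_{C_0(\Pi)}$ of \eqref{eq:predual:real} form a dual pair, which is what makes $G^*$ the indicator functional written above.
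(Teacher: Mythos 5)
Your proposal is correct in substance and lands on the same predual, but it runs the Fenchel machinery in the opposite direction from the paper, and the difference is worth noting. You dualize the primal problem \eqref{eq:sparse:functional:real} directly, taking $\Lambda=V:\mM(\Pi)\to L^2(\Pi)$, which forces the conjugate of $\alpha\|\cdot\|_{\mM(\Pi)}$ to live on the bidual $\mM(\Pi)^*$ and then requires the observation that $V^*q\in H^2(\Pi)\hookrightarrow C(\Pi)$ (for $d\le 3$) to turn the abstract ball constraint into $\|V^*q\|_{C_0(\Pi)}\le\alpha$, followed by the substitution $y=V^*q$. The paper instead \emph{starts} from \eqref{eq:predual:real}: it sets $X=H^2(\Pi)$, $Y=C(\Pi)$, takes $\Lambda$ to be the embedding $H^2(\Pi)\hookrightarrow C(\Pi)$, puts the entire quadratic term $\tfrac12\|D^*y+U\|_2^2-\tfrac12\|U\|_2^2$ into $F$ and the ball indicator into $G$, and computes that the Fenchel dual is exactly \eqref{eq:sparse:functional:real} via $G^*(\zeta)=\alpha\|\zeta\|_{\mM(\Pi)}$ and $F^*(\Lambda^*\zeta)=\tfrac12\|D^{-1}\zeta-U\|_2^2$ using \eqref{eq:inverse:D:V}. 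This is the standard way to handle the non-reflexivity of $\mM(\Pi)=C_0(\Pi)'$: the dual variable automatically lands in $C(\Pi)'=\mM(\Pi)$ rather than in a bidual, and it is literally what makes \eqref{eq:predual:real} a \emph{pre}dual. It also dissolves several of the difficulties you flag at the end: since $\Lambda$ is the embedding and not $D^*$, the adjoint $\Lambda^*$ is never identified with the distributional operator $D$, no integration by parts of $(q,D^*y)_{L^2}$ and no boundary terms arise ($D^*$ stays inside $F$, whose conjugate is computed by completing the square and invoking $VD=\mathrm{Diag}[I,I]$), and the range identity you worry about is not needed. Your route is a legitimate heuristic and you do propose the reverse verification (dual of \eqref{eq:predual:real} recovers \eqref{eq:sparse:functional:real}), which is precisely the paper's one-step argument; but as the primary derivation the paper's direction is cleaner and avoids the bidual entirely. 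Your identification of the sum norm \eqref{def:vector:randon} with the max-type constraint $\|y_1\|_{C(\Omega)}\le\alpha$, $\|y_2\|_{C(\Omega)}\le\alpha$ as a dual pair, and of $F^*$ by completing the square, matches the paper's computation.
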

\begin{proof}
	We first introduce the Fenchel duality theory briefly (see Chapter 4.3 of \cite{KK}).
	Let $X$ and $Y$ be Banach spaces with topological duals denoted by $X^*$ and $Y^*$. Furthermore, suppose $\Lambda$ be a linear, bounded operator from $X$  to $Y$ and $F: X \rightarrow \mathbb{R}\cup \{\infty\}$, $G: Y \rightarrow \mathbb{R}\cup \{\infty\}$ be convex,
	lower semi-continuous functionals not identically equal to $\infty$. We assume that there exists $v_0 \in X$ such that $F(v_0) < \infty$, $G(\Lambda v_0)<\infty$ and $G$ is continuous at $\Lambda v_0$. The Fenchel duality theory tells that
	\begin{equation}\label{eq:fenchel:dual:theory}
	\inf_{u \in X} F(u) + G(\Lambda u) = \sup_{p \in Y^*} -F^*(\Lambda^* p) - G^{*}(-p),
	\end{equation}
	where $F^*: X^* \rightarrow \mathbb{R}\cup \{\infty\}$ denotes the conjugate of $F$ defined by \cite{HBPL, KK}
	\begin{equation}\label{eq:conjugate}
	F^*(v^*): = \sup_{v \in X }\langle v, v^* \rangle - F(v).
	\end{equation}
	Assuming there exist a solution $(u^*,p^*)$ of \eqref{eq:fenchel:dual:theory}, the optimality conditions of \eqref{eq:fenchel:dual:theory} can be obtained as
	\begin{equation}\label{eq:opti:fenchel}
	\Lambda^* p^* \in \partial F(u^*), \quad -p^* \in \partial G(\Lambda u^*),
	\end{equation}
	which connect the primal solution $u^*$ and the dual solution $p^*$.  We would use this relation to recover the primal solution from the dual solution.
	
	We prove it by using the Fenchel duality directly. Let $X= H^{2}(\Pi)$, $Y = C(\Pi)$ and $\Lambda$ be the embedding from $H^{2}(\Pi)$ to $C(\Pi)$. $F$ and $G$ are as follows,
	\[
	F(y) = \frac{1}{2}\|D^*y + U\|_{2}^2 - \frac{1}{2}\|U\|_{2}^2, \quad G(y) = I_{\{ \|y\|_{C(\Pi)} \leq \alpha\}}(y),
	\]
	where the indicator function
	\[
	I_{\{ \|y\|_{C(\Pi)} \leq \alpha\}}(y):=
	\begin{cases}
	0, \quad \|y\|_{C(\Pi)} \leq \alpha (\Leftrightarrow \|y_1\|_{C(\Omega)} \leq \alpha, \ \|y_2\|_{C(\Omega)} \leq \alpha) \\
	\infty, \quad \text{else}.
	\end{cases}
	\]
	With the standard $L^2$ inner product in $\mathbb{R}^2$, \eqref{eq:inverse:D:V} and direct calculations, we have the Fenchel dual function of $G$ is  $G^*(\zeta) = \alpha \|\zeta\|_{\mM(\Pi)}$ and the Fenchel dual function of $F$ is $\frac{1}{2}\|D^{-1} \zeta - U\|_{2}^2$. By Fenchel duality theory,
	we get the predual functional \eqref{eq:predual:real}.
\end{proof}

\begin{Remark}
	The existence of a solution of the predual functional \eqref{eq:predual:real} follows similarly to Theorem \ref{thm:existence:ori}.
\end{Remark}
\begin{Remark}
	It would be very interesting to consider using less or sparse scattering data of $\Omega$ as in \eqref{eq:sparse:functional} for the reconstruction of the sparse sources, i.e., assuming $\Omega_0 \Subset \Omega$,
	\begin{equation}\label{eq:sparse:functional:part}
	\min_{\mu \in \mM(\Omega)}\frac{1}{2}\|\mV \mu - u_0^s\|_{L^{2}(\Omega_{0})}^2 + \alpha\|\mu\|_{\mM(\Omega)}.
	\end{equation} 	
We leave it for the future study and we mainly focus on the theoretical analysis and the effectiveness of our algorithm here.
\end{Remark}
\begin{Remark}
{	For the case of using
	the real part $u_R$ with $\mV_R$ and its inverse only, we leave the discussions in the appendix.}
\end{Remark}
%\subsection{First-order Algorithm}
%For primal-dual algorithm, it is convenient to introduce the adjoint operator $V^*$ of $V$
%\begin{equation}
%\mV^* : (W^{1,p}(\Omega))^* =W^{-1,q}(\Omega)  \rightarrow (\mM(\Omega))^* = C(\Omega), \ (\mV \varphi)(y): = \int_{\Omega} \Phi(x,y) \varphi(x)dx,
%\end{equation}
%such that
%\[
%\langle \mV \mu, \varphi  \rangle = \langle \mu, \mV^* \varphi \rangle.
%\]
%We can write \eqref{eq:sparse:functional} as follows,
%\begin{equation}\label{eq:primal-dual:cp}
%\min_{\mu \in \mM(\Omega)}  F(\mathcal{V} \mu) + G(\mu), \quad F(x) = \frac{1}{2}\|x - u_{0}^s\|^2, \ F^*(y) = \frac{1}{2}\|y\|^2 + \langle y, u_{0}^s \rangle,  \  G(\mu) = \alpha\|\mu\|_{\mM(\Omega)}.
%\end{equation}
%The primal-dual form of \eqref{eq:primal-dual:cp} can be written as
%\[
%\min_{\mu \in \mM(\Omega)} \max_{y \in L^2(\Omega)}G(\mu) + \langle \mV \mu,  y\rangle - F^*(y).
%\]
%Then we get the primal-dual algorithm of Chambolle and Pock \cite{CP}
%\begin{equation}\label{iteration:primal-dual-cp}
%\begin{cases}
%y^{k+1} = (I + \sigma \partial F^*)^{-1}(y^k + \sigma \mV \bar \mu^k), \\
%\mu^{k+1} = (I + \tau \partial G)^{-1}(\mu^k - \tau \mV^*y^{k+1}),\\
%\bar \mu^{k+1} = \mu^{k+1} +\theta(\mu^{k+1} - \mu^k).
%\end{cases}
%\end{equation}
\subsection{Semismooth Newton Method}
We use semismooth Newton method to solve the predual problem \eqref{eq:predual:real}. We use Moreau-Yosida regularization to {the predual problem for the constraint $\|y\|_{C_0}$} in \eqref{eq:predual:real}, i.e.,
\begin{equation}\label{eq:ssn:moreau}
\min_{y \in H^2(\Pi)} \frac{1}{2}\|D^*y + U\|_{2}^2-\frac{1}{2}\|U\|_{2}^2 + \frac{1}{2\gamma}\|
\max(0, \gamma(y-\alpha))\|_{2}^2 + \frac{1}{2\gamma}\|
\min(0, \gamma(y+\alpha))\|_{2}^2.
\end{equation}
Similar to the proof in \cite{CLA1}, we have the following remark to the asymptotic relation between the solution of \eqref{eq:ssn:moreau} and \eqref{eq:predual:real}.
\begin{Remark}\label{rem:continuation}
	Denoting the solution of \eqref{eq:ssn:moreau} as $y_{\gamma}$, it can be proved that $y_{\gamma} \rightarrow y^*$ where $y^*$ is the solution of \eqref{eq:predual:real} while $\gamma \rightarrow +\infty$.
\end{Remark}
Now, we turn to semismooth Newton method for solving \eqref{eq:ssn:moreau}. The optimality condition of \eqref{eq:ssn:moreau} is
\begin{equation}\label{eq:opti:moreau}
\mF(y^*): = D(D^*y^* +U) +  \max(0, \gamma(y^*-\alpha)) + \min(0,\gamma(y^*-\alpha))=0.
\end{equation}
In order to use semismooth Newton method to solve this nonlinear equation, {
we} choose the Newton derivatives of $\max(0, c(y-\alpha))$ and $\min(0, c(y+\alpha))$ as follows
\begin{equation}\label{eq:newton:deri:1}
\partial_{y}\max(0, \gamma(y-\beta))(y,\tilde y) \ni \gamma\chi_{\mA^{+}}  \tilde y,
\quad \partial_{y} \min(0, \gamma(y+\beta))(y,\tilde y) \ni \gamma \chi_{\mA^{-}}\tilde y,
\end{equation}
where $\chi_{\mA^{+}}$ and $\chi_{\mA^{-}}$ depend on $y$ defined by (for $i=1,2$)
\begin{equation}\label{eq:katta:def}
\begin{cases}
\chi_{\mA^+} = \text{Diag}[\chi_{\mA^+}^1, \chi_{\mA^+}^2], \\
\chi_{\mA^-} = \text{Diag}[\chi_{\mA^-}^1, \chi_{\mA^-}^2], \\
\end{cases}
\
\chi_{\mA^+}^i = \begin{cases}
1, \quad y_i \geq \beta , \\
0, \quad y_i < \beta ,
\end{cases}
\ 
\chi_{\mA^-}^i = \begin{cases}
1, \quad y_i \leq -\beta , \\
0, \quad y_i > -\beta.
\end{cases}
\end{equation}
The semismooth Newton method for solving the nonlinear system $\mF(y)=0$ reads as,
\begin{equation}\label{eq:ssn:ori}
y^{k+1} = y^{k} - \mathcal{N} (y^k)^{-1}\mF (y^k),
\end{equation}
where $\mathcal{N}(y^k) \in \partial \mF(y^k)$ is the semismooth Newton derivative of $\mF$ at $y^k$, and $\mathcal{N}(y)^{-1}$ {exists and is uniformly bounded} in a small neighborhood of the solution $y^*$ of $\mF(y^*)=0$.
In our case,  the semismooth Newton iterations \eqref{eq:ssn:ori} can be reformulated as
\begin{equation}\label{semi:smoothnewton:sys}
\mathcal{N}(y^k)y^{k+1} = \mathcal{N}(y^k)y^k  - \mF(y^k).
\end{equation}
Denoting $\chi_{\mathcal{A}_k} = \chi_{\mathcal{A}_k^{+}} + \chi_{\mathcal{A}_k^{-}}$, $\vec{1} = [1,1]^{T}, $ and choosing $\mathcal{N}(y^k) =  D D^*+ \gamma \chi_{\mathcal{A}_k}$ with \eqref{eq:newton:deri:1}, the Newton update \eqref{semi:smoothnewton:sys} becomes
\begin{equation}
(DD^* + \gamma \chi_{\mathcal{A}_k}) y^{k+1} = - D U + \gamma \alpha (\chi_{\mathcal{A}_{k}^{+}} -\chi_{\mathcal{A}_{k}^{-}})\vec{1},
\end{equation}
where $\chi_{\mathcal{A}_{k}^{+}}$ and $\chi_{\mathcal{A}_{k}^{-}}$ are defined in \eqref{eq:katta:def} with $y$ replaced by $y^k$.
\subsection{Discretization and the Finite Dimensional Spaces Setting}
 Henceforth we put our discussion in the finite dimensional spaces. In numerical tests, we use the finite difference discretization and the radiating condition is realized with PML (perfectly matched layer) absorbing boundary condition.
 Now we just consider the 2D problem, i.e. $d=2$.
 The domain $\Omega$ is chosen as $(0,1) \times (0,1)$. Now we give the discretized version of the operators $\mD = \mV^{-1}$ in \eqref{eq:volume:repre}.

 Firstly, we give a brief introduction of the PML used in the discretization, see \cite{cx121} for details. Let $\alpha_{i}(x_{i}) = 1 + \mathbf{i} \sigma(x_{i})$, $i=1,2$ be the model medium property, where $\sigma(t)$ is a piecewise smooth function concentrated on point $t=0.5$ and $\sigma(t) = 0$, $t \in (\lambda, 1-\lambda)$,
 where $\lambda = 2\pi / k$ is the wavelength. For $x \in \mathbb{R}^2$, denote by $\tilde{x}$  the complex coordinate, where
 \begin{equation}
 \tilde{x}_i = \int_0^{x_{i}}\alpha_{i}(t)dt=x_j+\mathbf{i} \int^{x_i}_0\sigma(t)dt, \quad i = 1,2.
 \end{equation}
  Define $\tilde u(x)=u(\tilde x)$. Obviously $\tilde u=u$ in $(\lambda, 1-\lambda) \times (\lambda, 1-\lambda)$ and $\tilde u$ satisfies $-\tilde\Delta\tilde u-k^2n(x)\tilde u=f$ in $\mathbb{R}^2$,  where $\tilde\Delta$ is the Laplacian with respect to the stretched coordinate $\tilde x$. This yields by the chain rule
  that $\tilde u$ satisfies the PML equation
  \begin{equation}
  -J^{-1}\nabla \cdot (B\nabla \tilde{u}) - k^2(x)n(x) \tilde{u} = f \quad {\rm in} \ \mathbb{R}^2,
  \end{equation}
  where $ B(x) = {\rm diag}\left(\frac{\alpha_2(x_2)}{\alpha_1(x_1)}, \frac{\alpha_{1}(x_{1})}{\alpha_{2}(x_{2})}\right)$ is a diagonal matrix and $J(x)=\alpha_1(x_1)\alpha_2(x_2)$.
  Then the truncated PML problem can be defined as
  \begin{align}
  -J^{-1}\nabla \cdot (B\nabla \hat{u}) - k^2(x)n(x) \hat{u} &= f \quad {\rm in}\, \Omega, \\
  \hat{u} &= 0 \quad {\rm on}\, \partial \Omega.
  \end{align}
  Then we use the finite difference method to discretize the above PML problem and
  suppose the algebraic system is still $\mD \in \mathbb{C}^{n\times n}$ for convenience. We can also assume $\mD^{-1} = \mV$ and  thus get $\mV_{R}$ and $\mD_{R}$ similarly as the continuous case.

Now we turn to the semismooth Newton method again. We need to recover the primal solution $\zeta$ after solving $y^*$ of \eqref{eq:opti:moreau} with the semismooth Newton method. Actually, we have the following lemma.
\begin{Lemma}
The solution $\zeta^*$ corresponding to \eqref{eq:ssn:moreau} is recovered by
\begin{equation}\label{eq:recover:primal}
\zeta^* = -\max(0, \gamma(y^*-\alpha)) - \min(0, \gamma(y^*+\alpha)).
\end{equation}
\end{Lemma}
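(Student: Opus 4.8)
The plan is to obtain \eqref{eq:recover:primal} as the primal--dual relation \eqref{eq:opti:fenchel} of Fenchel duality, applied this time to the Moreau--Yosida regularized predual \eqref{eq:ssn:moreau} in place of \eqref{eq:predual:real}. First I would note that the last two terms of \eqref{eq:ssn:moreau} together are exactly the Moreau envelope $G_\gamma$ of the indicator $G = I_{\{\|y\|_{C_0(\Pi)}\le\alpha\}}$: since $G$ is the indicator of the box $\{\,|y_1|\le\alpha,\ |y_2|\le\alpha\,\}$, one has $G_\gamma(z) = \tfrac{\gamma}{2}\operatorname{dist}(z,\text{box})^2 = \tfrac{1}{2\gamma}\|\max(0,\gamma(z-\alpha))\|_2^2 + \tfrac{1}{2\gamma}\|\min(0,\gamma(z+\alpha))\|_2^2$, the two truncations having disjoint supports. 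Hence \eqref{eq:ssn:moreau} is the Fenchel predual of the problem obtained from \eqref{eq:sparse:functional:real} by replacing $G^*(-\zeta) = \alpha\|\zeta\|_{\mM(\Pi)}$ with $(G_\gamma)^*(-\zeta) = \alpha\|\zeta\|_{\mM(\Pi)} + \tfrac{1}{2\gamma}\|\zeta\|_{L^2(\Pi)}^2$, i.e.\ of the $L^2$-regularized sparse functional; its minimizer is the $\zeta^*$ of the statement, and by Remark \ref{rem:continuation} it approaches the minimizer of \eqref{eq:sparse:functional:real} as $\gamma\to\infty$.

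Then I would run the duality argument exactly as in the derivation of the predual problem \eqref{eq:predual:real}, with $X = H^2(\Pi)$, $Y = C(\Pi)$, $\Lambda$ the embedding, $F(y) = \tfrac12\|D^*y+U\|_2^2 - \tfrac12\|U\|_2^2$ and $G_\gamma$ as above. The qualification condition for \eqref{eq:fenchel:dual:theory} is immediate ($G_\gamma$ is finite and continuous everywhere, so one may take $v_0 = 0$), and $F$ is strictly convex since $D=V^{-1}$ is invertible; hence there is no duality gap, $y^*$ is the unique minimizer of \eqref{eq:ssn:moreau}, and there is a primal--dual pair $(y^*,\zeta^*)$ with $\zeta^*$ the (unique) minimizer of the regularized primal. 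The optimality system \eqref{eq:opti:fenchel} becomes $\Lambda^*\zeta^* = \nabla F(y^*)$ and $-\zeta^* \in \partial G_\gamma(\Lambda y^*)$. Because $G_\gamma$ is $C^1$ with $\nabla G_\gamma(y) = \gamma(y - P_{\text{box}}(y)) = \max(0,\gamma(y-\alpha)) + \min(0,\gamma(y+\alpha))$ ($P_{\text{box}}$ the metric projection, the equality being componentwise differentiation of the truncated squares together with $\gamma\max(0,t) = \max(0,\gamma t)$), the second relation reads $\zeta^* = -\max(0,\gamma(y^*-\alpha)) - \min(0,\gamma(y^*+\alpha))$, which is \eqref{eq:recover:primal}. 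As a cross-check, the first relation gives $\Lambda^*\zeta^* = \nabla F(y^*) = D(D^*y^*+U)$, and by the optimality condition \eqref{eq:opti:moreau} the right-hand side is again $-\max(0,\gamma(y^*-\alpha)) - \min(0,\gamma(y^*+\alpha))$.

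The one point that needs care is the identification of the abstract dual variable $p^*\in\mM(\Pi) = C_0(\Pi)'$ with $\zeta^*$ and with the vector $D(D^*y^*+U)$: the Moreau--Yosida smoothing forces $\zeta^*$ to be an $L^2(\Pi)$-function, which \eqref{eq:recover:primal} makes visible since $y^*\in H^2(\Pi)\hookrightarrow C(\Pi)$ so its truncations lie in $L^2(\Pi)$, and on such elements $\Lambda^*$ is the canonical inclusion, so $\Lambda^*\zeta^* = \nabla F(y^*)$ is a genuine identity of functions (in the discretized setting of the following subsection, where $D\in\mathbb{C}^{n\times n}$, the argument reduces to elementary linear algebra and this issue disappears). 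If one prefers to avoid duality altogether, the lemma can instead be proved by direct verification: define $\zeta^*$ by \eqref{eq:recover:primal}, use \eqref{eq:opti:moreau} and \eqref{eq:inverse:D:V} to get $\mV^*(\mV\zeta^* - U) = y^*$ and $\tfrac1\gamma\zeta^* = P_{\text{box}}(y^*) - y^*$, and check that $-\mV^*(\mV\zeta^*-U) - \tfrac1\gamma\zeta^* = -P_{\text{box}}(y^*)$ lies in $\alpha\,\partial\|\zeta^*\|_{\mM(\Pi)}$, which holds because $\|P_{\text{box}}(y^*)\|_{C_0(\Pi)}\le\alpha$ and $\langle -P_{\text{box}}(y^*),\zeta^*\rangle = \alpha\|\zeta^*\|_{\mM(\Pi)}$ on the support of $\zeta^*$; this yields $0\in\partial J(\zeta^*)$ for the regularized primal objective $J$. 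I expect the subgradient-of-the-norm bookkeeping in this alternative to be the only mildly fussy step, which the duality route sidesteps at the cost of pinning down the $\Lambda^*$ identification.
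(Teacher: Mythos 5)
Your proposal is correct and follows essentially the same route as the paper: both apply Fenchel duality with $F$ the quadratic term and $G$ the Moreau--Yosida penalty (whose conjugate is $\alpha\|\cdot\|_{\mM(\Pi)}+\tfrac{1}{2\gamma}\|\cdot\|_{L^2}^2$), and read off \eqref{eq:recover:primal} from the optimality relation $-\zeta^*\in\partial G(y^*)$ using the explicit gradient of $G$. Your identification of $G$ as the Moreau envelope of the box indicator, the qualification check, and the cross-check against \eqref{eq:opti:moreau} are extra detail the paper omits, not a different argument.
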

\begin{proof}
The primal solution $\zeta^*$ is still calculated from the Fenchel duality theory.
Let
\[
F(y): = \frac{1}{2}\|D^*y + U\|_{2}^2-\frac{1}{2}\|U\|_{2}^2 , \quad G(y)= \frac{1}{2\gamma}\|
\max(0, \gamma(y-\alpha))\|_{2}^2 + \frac{1}{2\gamma}\|
\min(0, \gamma(y+\alpha))\|_{2}^2.
\]
By direct calculation with definition \eqref{eq:conjugate}, one can readily verify the dual function $F^*$ and $G^*$ are as follows \cite{CLA1, CLA3}
\[
F^*(\zeta) = \frac{1}{2}\|D^{-1} \zeta - U\|_{2}^2, \quad  G^*(-\zeta)=\alpha \|-\zeta\|_{L^1} + \frac{1}{2c}\|-\zeta\|_{L^2}^2, \quad \Lambda = I.
\]
By the optimality condition of the Fenchel duality \eqref{eq:opti:fenchel}, $-\zeta^* \in \partial G(y^*)$, we get \eqref{eq:recover:primal}.
\end{proof}
In order to approximate the original dual problem \eqref{eq:predual:real} by its Moreau-Yosisa regularization \eqref{eq:ssn:moreau}, we need to let $\gamma \rightarrow +\infty$ by Remark \eqref{rem:continuation}. We do it through continuation strategy. With these preparations, we get the following semismooth Newton algorithm for \eqref{eq:ssn:moreau}; see algorithm \ref{alg:ssn}.
\begin{algorithm}
	\caption{Semismooth Newton Method with continuation strategy for \eqref{eq:ssn:moreau}}
	\label{alg:ssn}
	\begin{algorithmic}[1]
		%\REQUIRE Decomposition of signal $x$
		\INPUT $y^0  \in V$, $\gamma_0>0$
		\OUTPUT $y$, $\mu$
		\STATE \textbf{Initialization} $y_{\gamma_{0}}^0=y^0$,
		\WHILE{$0 \leq i \leq I$, $\gamma_i=\gamma^i$}
		\WHILE{$k \leq K$}
		\STATE Set  $\mathcal{A}_{k}^{+} =\{x \in \Omega : y_{\gamma^i}^k(x)>\alpha\}, \  \mathcal{A}_{k}^{-}=\{x \in \Omega : y_{\gamma^i}^k(x)<-\alpha\}, \ \mathcal{A}_{k} = \mathcal{A}_{k}^{+} \cup \mathcal{A}_{k}^{+}$
		\STATE Solve for $y_{\gamma^i}^k \in V$: $D D^* y_{\gamma^i}^{k+1} + \gamma \chi_{\mathcal{A}_k} y_{\gamma^i}^{k+1} = - D U + \gamma \alpha (\chi_{\mathcal{A}_{k}^{+}} -\chi_{\mathcal{A}_{k}^{-}})\vec{1}$
		\STATE Update $\mathcal{A}_{k}^{+}$, $\mathcal{A}_{k}^{-}$, $\mathcal{A}_{k}$
		\STATE Until $\mathcal{A}_{k}^{+}=\mathcal{A}_{k-1}^{+}$,\ $\mathcal{A}_{k}^{-}=\mathcal{A}_{k-1}^{-}$, set $y_{\gamma_{i+1}}^0= y_{\gamma_{i}}^{k}$.
		
		\ENDWHILE
		\ENDWHILE
		\STATE $y^*=y_{{\gamma}^{I}}^k$
		\STATE $\zeta = -\max(0, \gamma^{I}(y^*-\alpha)) - \min(0, \gamma^{I}(y^*+\alpha))$,  $\mu = \zeta_1$ with $\zeta = [\zeta_1, \zeta_2]^{T}$.
	\end{algorithmic}
\end{algorithm}

We will compare the sparse regularization \eqref{eq:sparse:functional} with the following Tikhonov regularization
\begin{equation}\label{eq:fun:tikhonov}
\min_{\mu \in L^2(\Omega)} \frac{1}{2}\|\mV\mu - u\|_{L^2(\Omega)}^2 + \frac{\alpha}{2}\|\mu\|_{L^2(\Omega)}^2.
\end{equation}
By the Tikhonov regularization theory, the minimizer of \eqref{eq:fun:tikhonov} exists, and is
\[
\mu_{T}^* = (\alpha I + \mV^* \mV)^{-1}(\mV^* u) = (\alpha \mathcal{D}\mathcal{D}^* + I)^{-1}(\mathcal{D}u).
\]
\subsection{Numerical Tests}
For the choice of the regularization parameter $\alpha$ in \eqref{eq:sparse:functional:real}, we choose it according to \cite{WNF}
\[
\alpha < \|V^* U\|_{\infty}.
\]
Otherwise $\zeta$ would be zero if $\alpha \geq \|V^* U \|_{\infty}$.
We choose $\alpha = 10^{-5}$ for all the following three examples. For the homogeneous medium, we consider the following two examples.

Example 1: Supposing $a = 1000$, $b=3000$, $k=6$, $\alpha = 10^{-5}$ and noise level $\epsilon = 0.01$, we choose the following sparse sources with 4 peaks; see Figure \ref{4ps}(a),
\begin{align*}
f_{4}(x,y) =& -ae^{-b((x-1/4)^2+(y-1/4)^2)} - ae^{-b((x-3/4)^2+(y-1/4)^2)} \\
  &- ae^{-b((x-1/2)^2+(y-1/4)^2)} + ae^{-b((x-1/2)^2+(y-3/4)^2}.
\end{align*}
\begin{figure}[!htb]
	%\graphicspath{{fig//}}
	\begin{center}
		\subfloat[Original source]
		{\includegraphics[width=4.6cm]{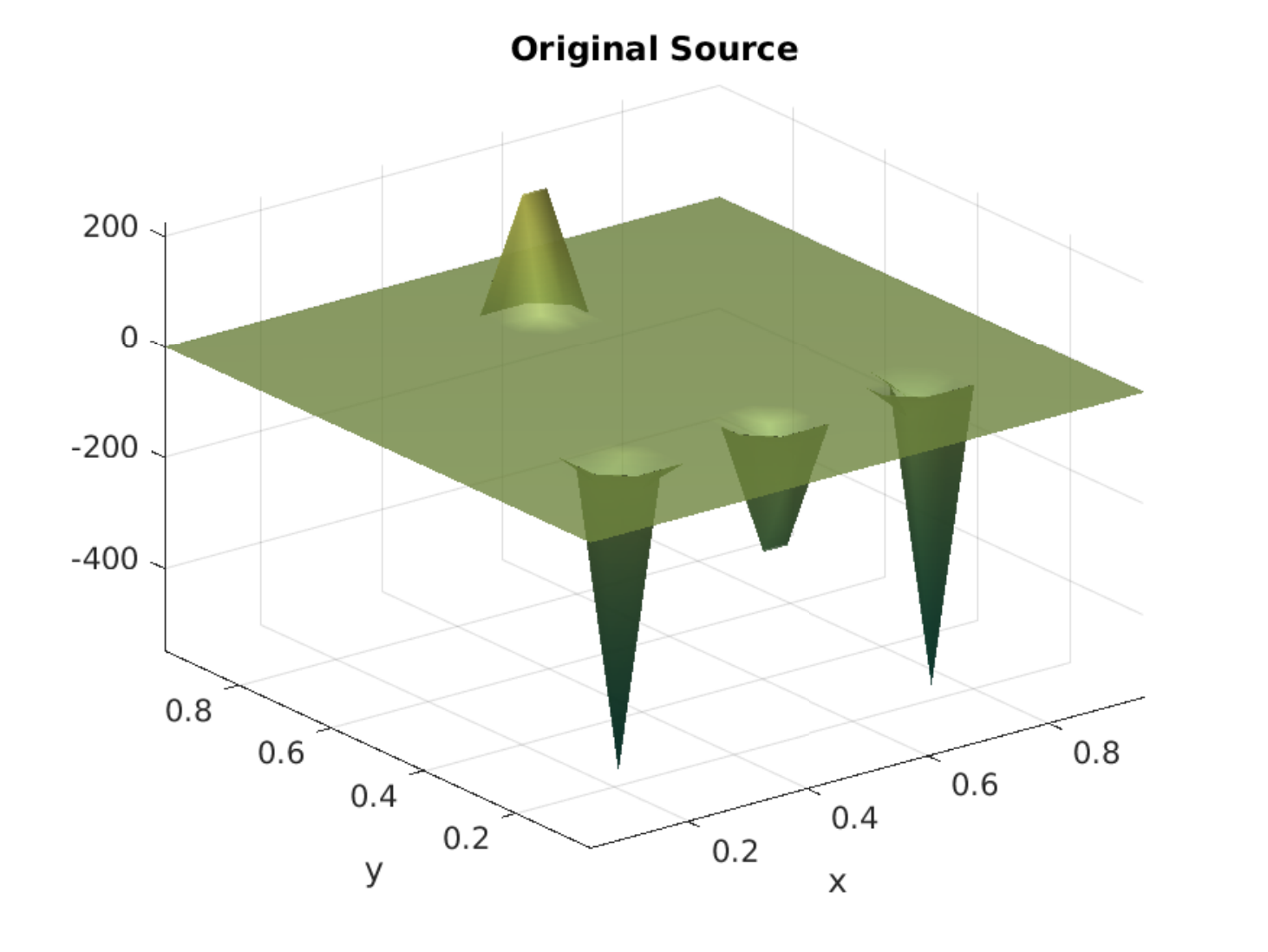}} \quad
		\subfloat[Tikhonov regularization]
		{\includegraphics[width=4.6cm]{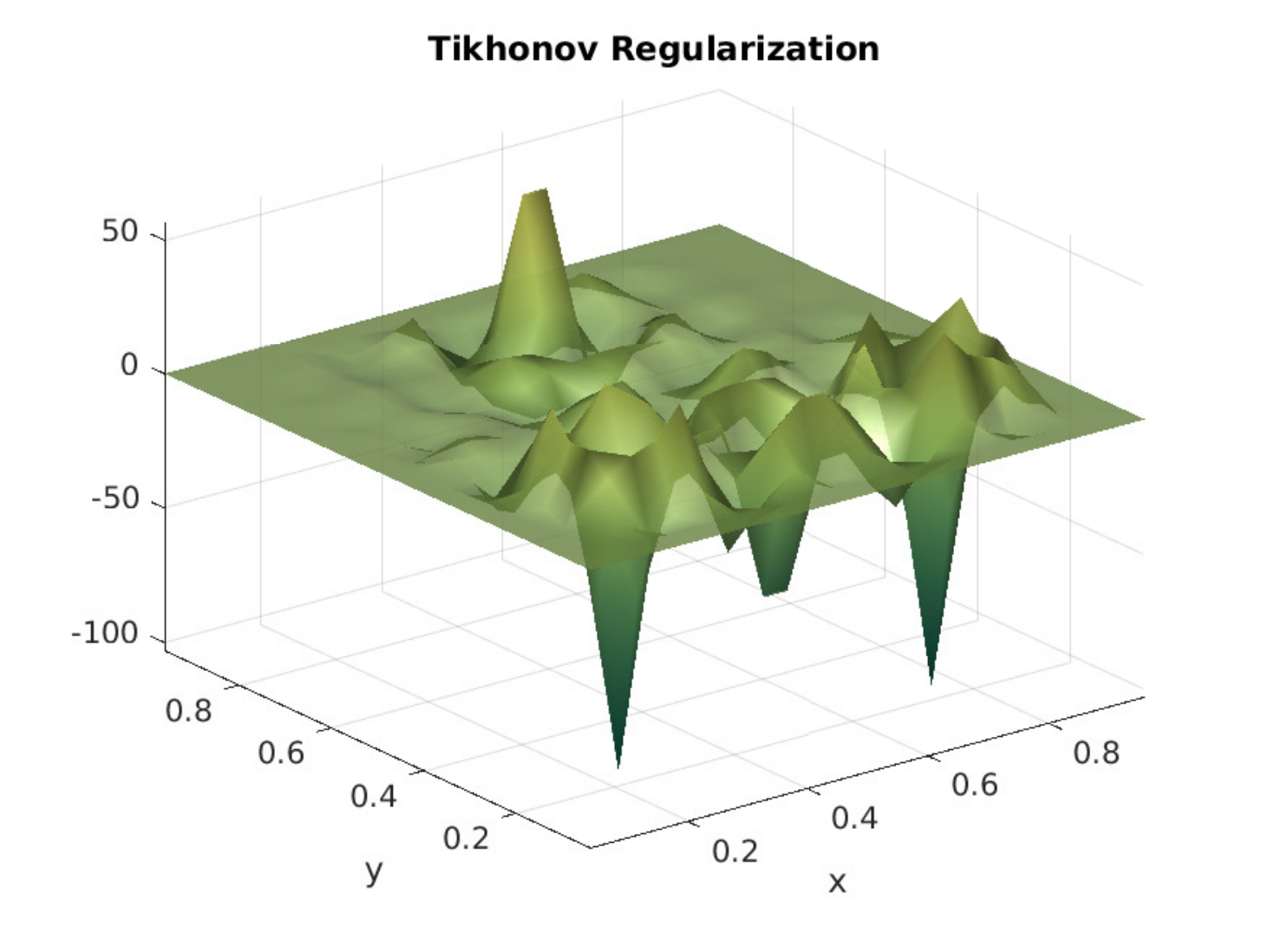}}\quad
		\subfloat[Sparse Regularization]
		{\includegraphics[width=4.6cm]{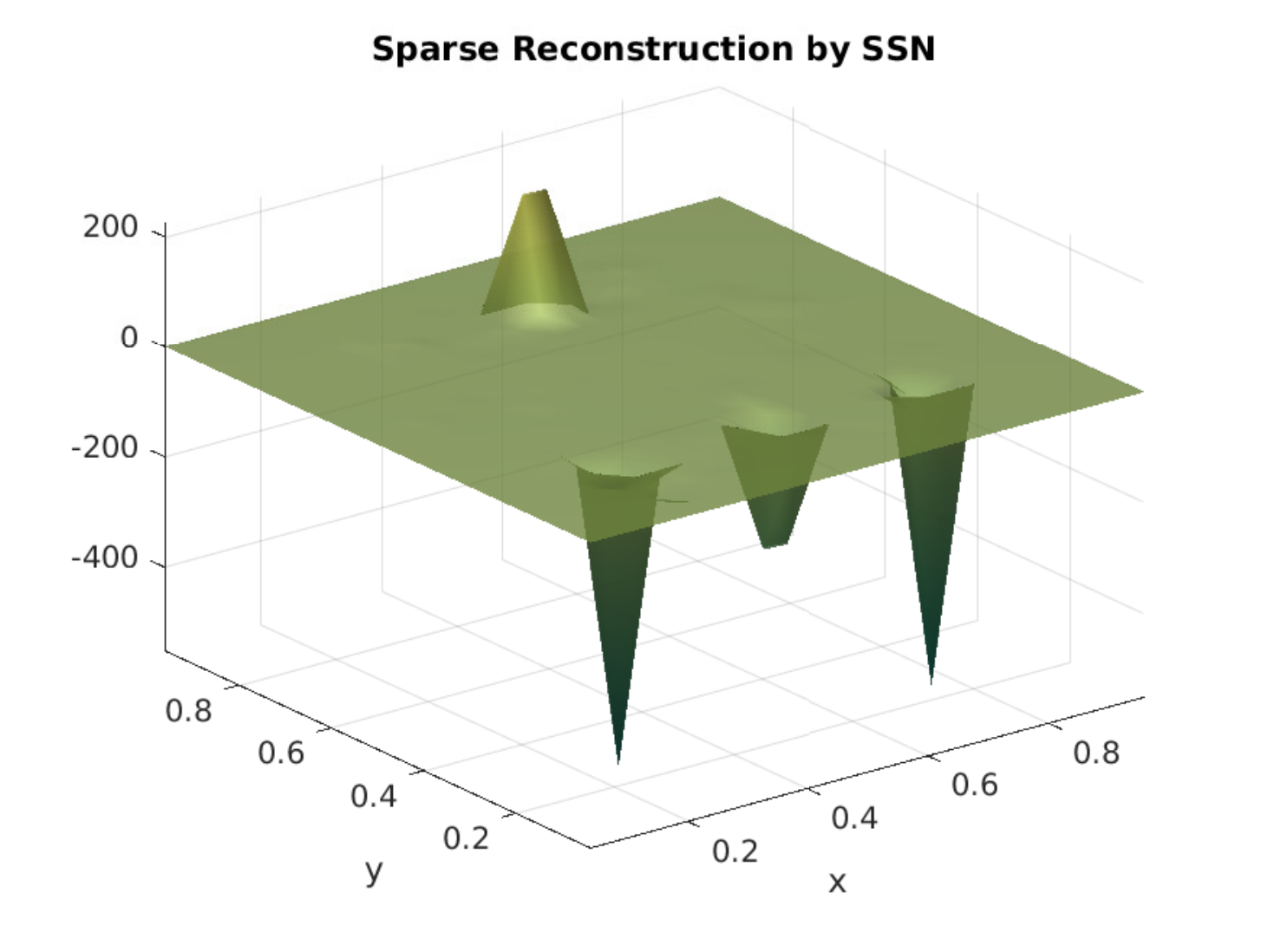}} \\
				\subfloat[Original source: position]
		{\includegraphics[width=4.6cm]{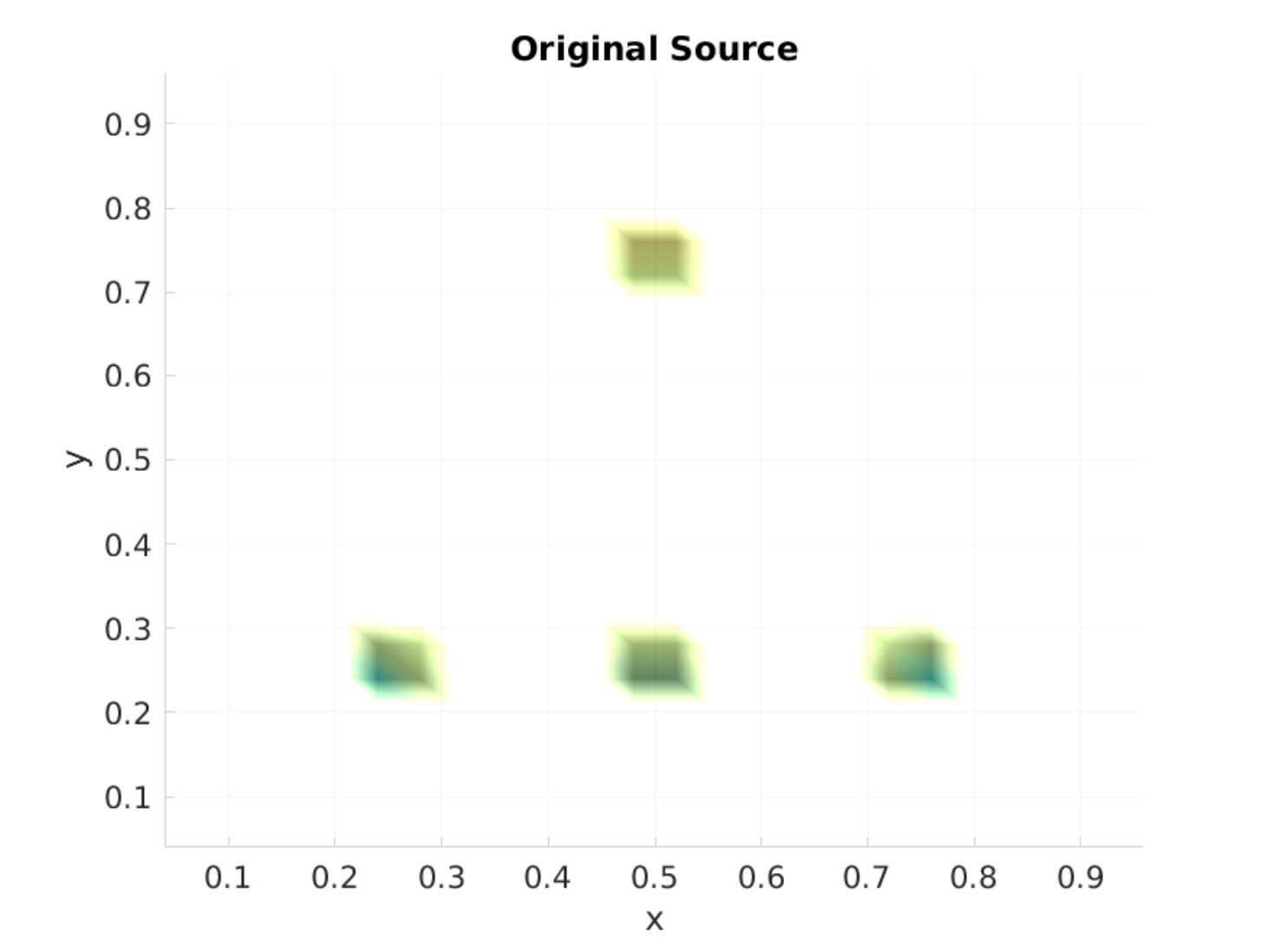}} \quad
		\subfloat[Tikhonov regularization: position]
		{\includegraphics[width=4.6cm]{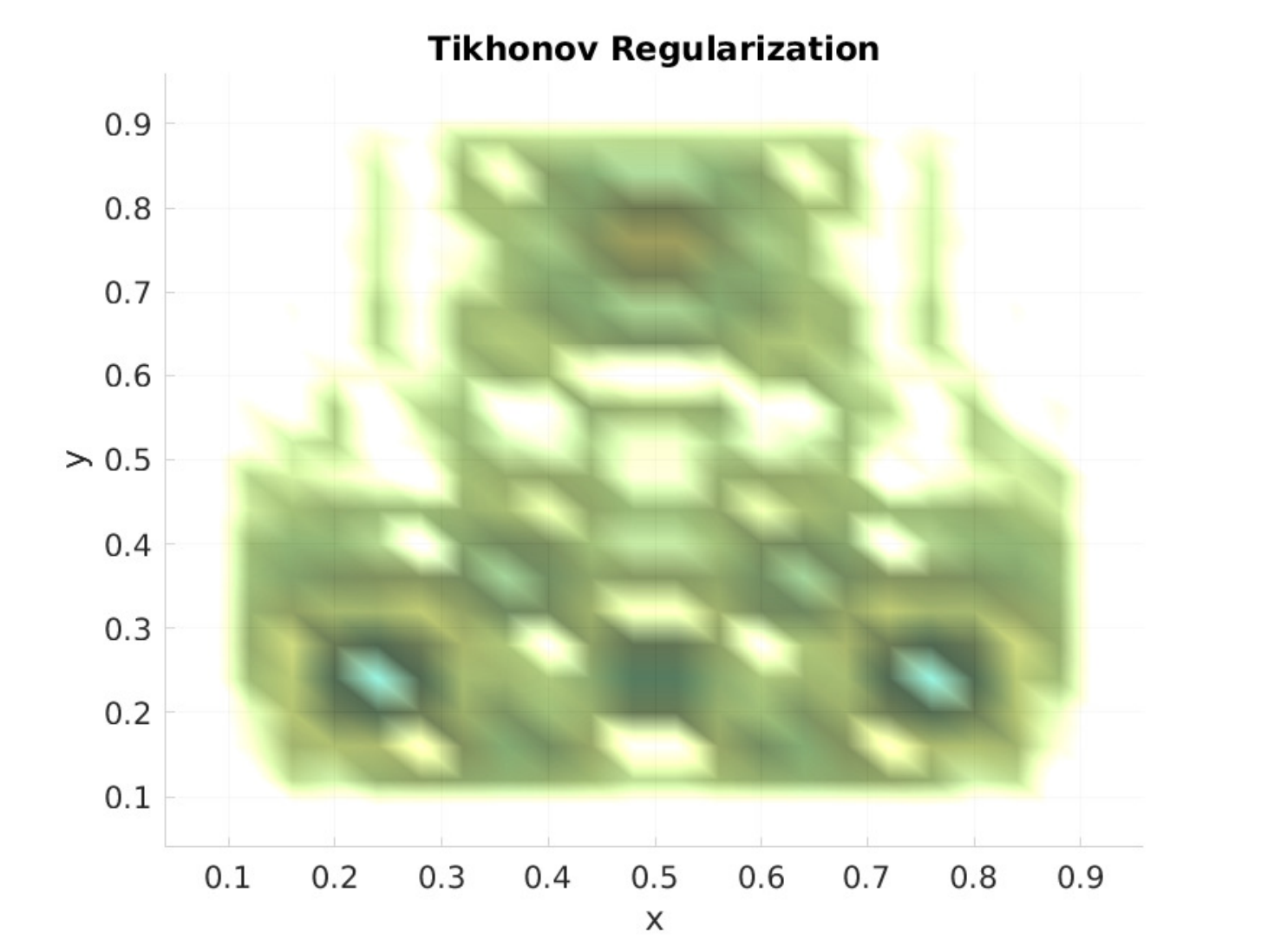}}\quad
		\subfloat[Sparse Regularization: position]
		{\includegraphics[width=4.6cm]{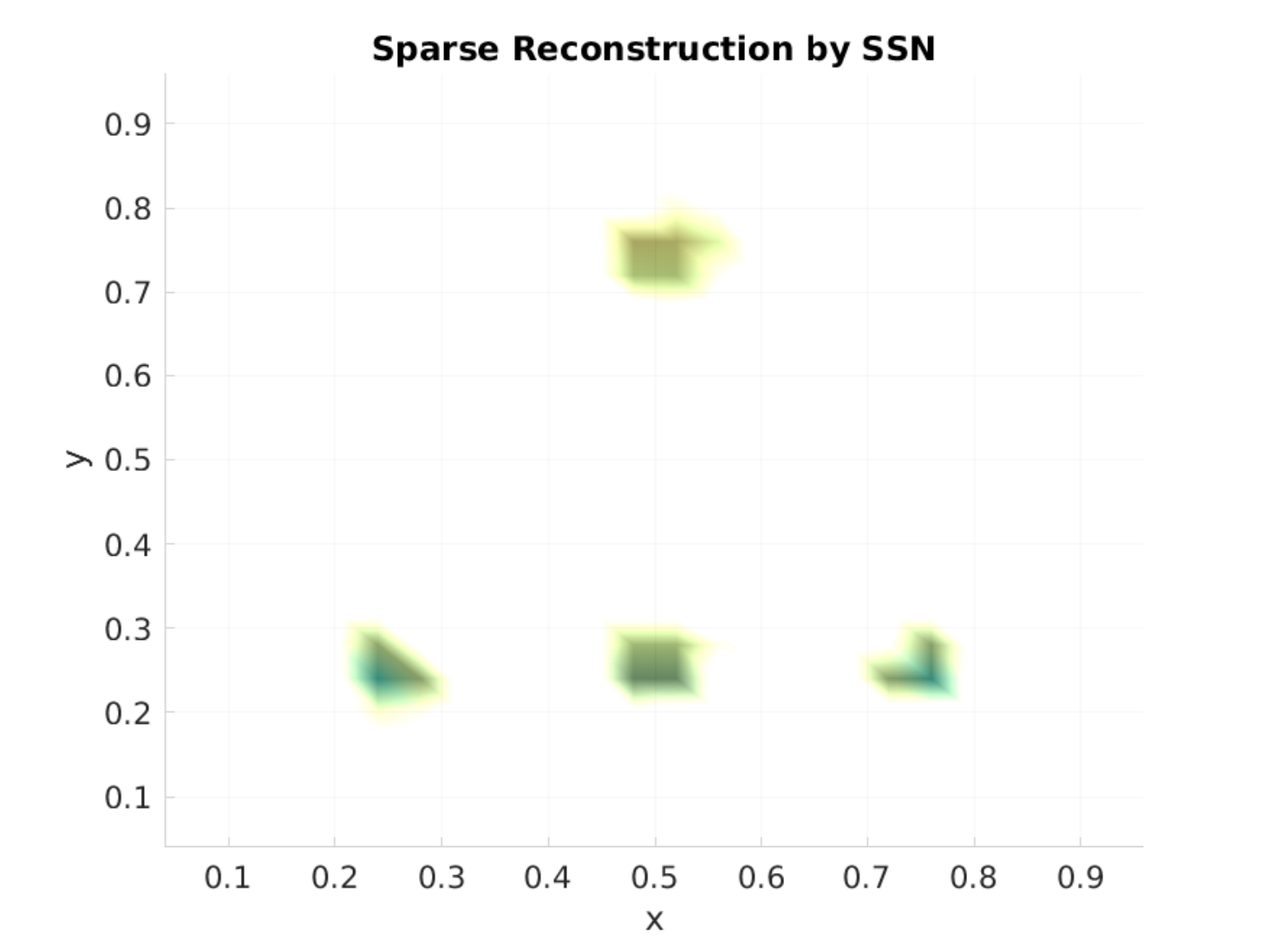}} \\
%		\subfloat[rpADMM($10^{-3}$)]
%		{\includegraphics[width=3.5cm]{fig/aerial_rpadmm_03.png}} \
		%\subfloat[fpADMM($10^{-6}$)]
		%{\includegraphics[width=4.5cm]{fig/fpadmm06shooter.png}} \quad
%		\subfloat[rpADMM($10^{-5}$)]
%		{\includegraphics[width=3.5cm]{fig/aerial_rpadmm_05.png}}
	\end{center}
	%\caption{Fronalpstock}
	\caption{Sparse sources of 4 peaks.}
	\label{4ps}
\end{figure}

%Supposing $a = 1000$, $b=3000$, $k=12$, $\alpha = 10^{-5}$ and noise level $\epsilon=0.01$, we choose the following sparse sources with 6 peaks
%\begin{align*}
% f_{6}(x,y) = &-ae^{-b((x-1/4)^2+(y-1/4)^2)} - ae^{-b((x-3/4)^2+(y-3/4)^2)} -  ae^{-b((x-1/2)^2+(y-3/4)^2)} \\
%  &+ ae^{-b((x-3/4)^2+(y-1/2)^2)}
% + ae^{-b((x-1/4)^2+(y-3/4)^2)} - ae^{-b((x-3/4)^2+(y-1/4)^2)}.
%\end{align*}
%
%
Example 2: Supposing $a = 1000$, $b=3000$, $k=24$, $\alpha = 10^{-5}$ and noise level $\epsilon=0.01$, we choose the following sparse sources with 9 peaks; see Figure \ref{9ps}(a),
\begin{align*}
%f = -amp*exp(-expo*((x-1/4).^2+(y-1/4).^2)) - amp*exp(-expo*((x-3/4).^2+(y-3/4).^2)) - amp*exp(-expo*((x-1/2).^2+(y-3/4).^2))
%
% + amp*exp(-expo*((x-3/4).^2+(y-1/2).^2))...
%+ amp*exp(-expo*((x-1/4).^2+(y-1/2).^2)) + amp*exp(-expo*((x-1/4).^2+(y-3/4).^2))
%- amp*exp(-expo*((x-3/4).^2+(y-1/4).^2)) - amp*exp(-expo*((x-1/2).^2+(y-1/4).^2))...
%+ amp*exp(-expo*((x-1/2).^2+(y-1/2).^2));
f_{9}(x,y) = &-ae^{-b((x-1/4)^2+(y-1/4)^2)} - ae^{-b((x-3/4)^2+(y-3/4)^2)} - ae^{-b((x-1/2)^2+(y-3/4)^2)}\\
 &+ae^{-b((x-3/4)^2+(y-1/2)^2)}+ae^{-b((x-1/4)^2+(y-1/2)^2)}+ ae^{-b((x-1/4).^2+(y-3/4)^2)}\\
 &-ae^{-b((x-3/4)^2+(y-1/4)^2)} - ae^{-b((x-1/2)^2+(y-1/4)^2)}+ ae^{-b((x-1/2)^2+(y-1/2)^2)}.
\end{align*}
%\begin{figure}[!htb]
%	%\graphicspath{{fig//}}
%	\begin{center}
%		\subfloat[Original source]
%		{\includegraphics[width=4.6cm]{fig1_6s-eps-converted-to}} \quad
%		\subfloat[Tikhonov regularization]
%		{\includegraphics[width=4.6cm]{fig2_6s-eps-converted-to}}\quad
%		\subfloat[Sparse Regularization]
%		{\includegraphics[width=4.6cm]{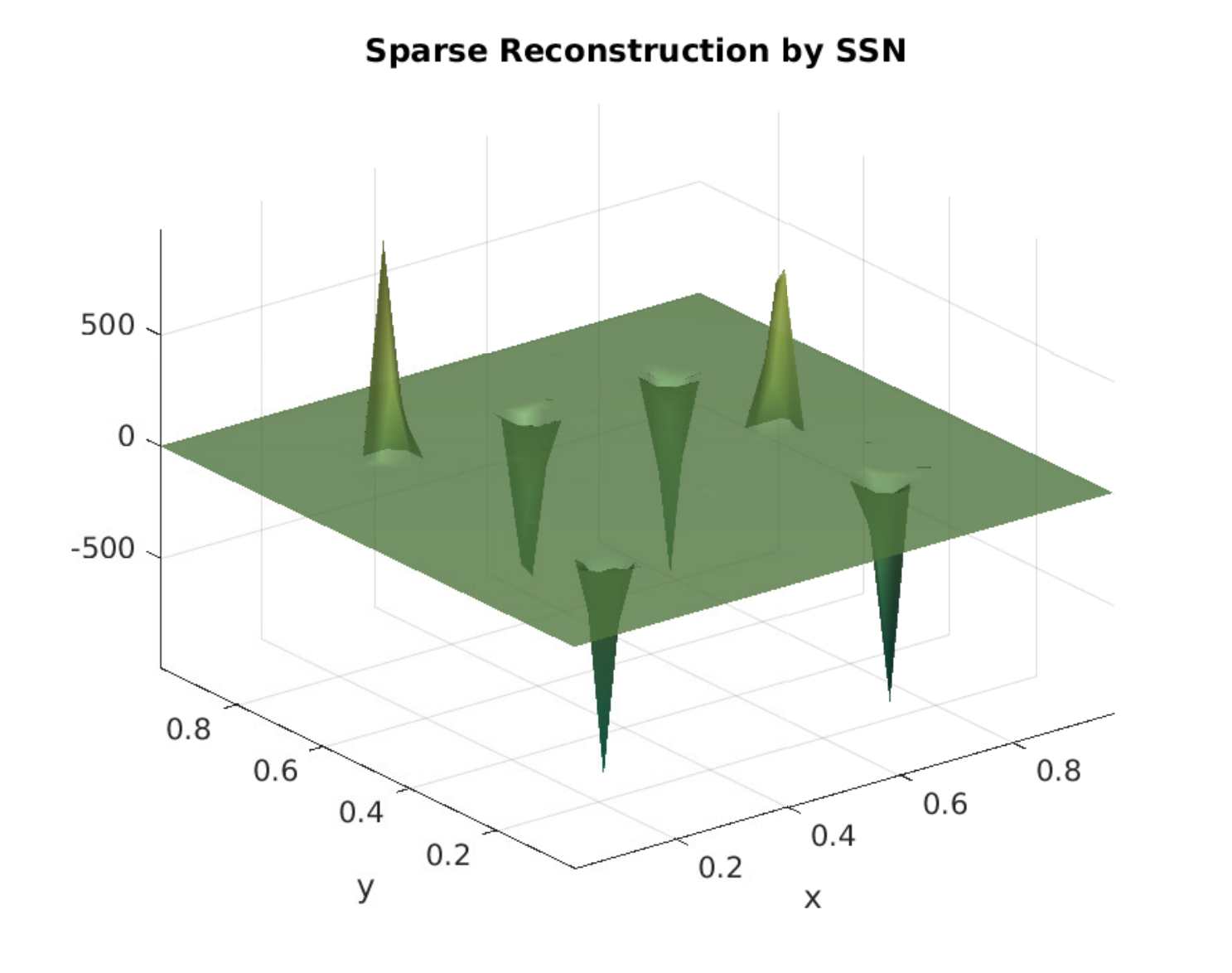}} \\
%		\subfloat[Original source: position]
%		{\includegraphics[width=4.6cm]{fig1_6ps-eps-converted-to}} \quad
%		\subfloat[Tikhonov regularization: position]
%		{\includegraphics[width=4.6cm]{fig2_6ps-eps-converted-to}}\quad
%		\subfloat[Sparse Regularization: position]
%		{\includegraphics[width=4.6cm]{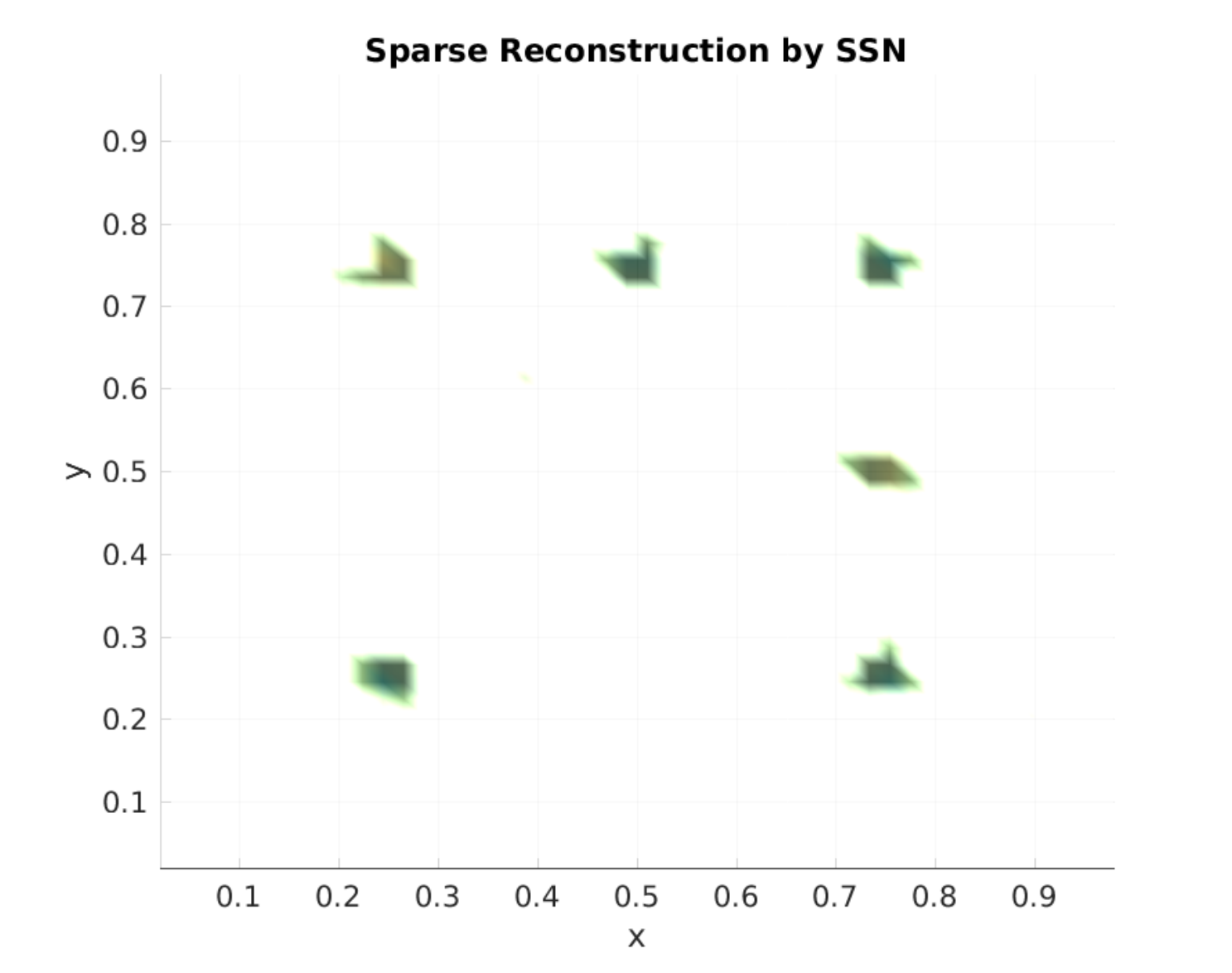}}
%		%		\subfloat[rpADMM($10^{-3}$)]
%		%		{\includegraphics[width=3.5cm]{fig/aerial_rpadmm_03.png}} \
%		%\subfloat[fpADMM($10^{-6}$)]
%		%{\includegraphics[width=4.5cm]{fig/fpadmm06shooter.png}} \quad
%		%		\subfloat[rpADMM($10^{-5}$)]
%		%		{\includegraphics[width=3.5cm]{fig/aerial_rpadmm_05.png}}
%	\end{center}
%	%\caption{Fronalpstock}
%	\caption{sparse sources of 6 peaks}
%	\label{6ps}
%\end{figure}

\begin{figure}[!htb]
	%\graphicspath{{fig//}}
	\begin{center}
		\subfloat[Original source]
		{\includegraphics[width=4.6cm]{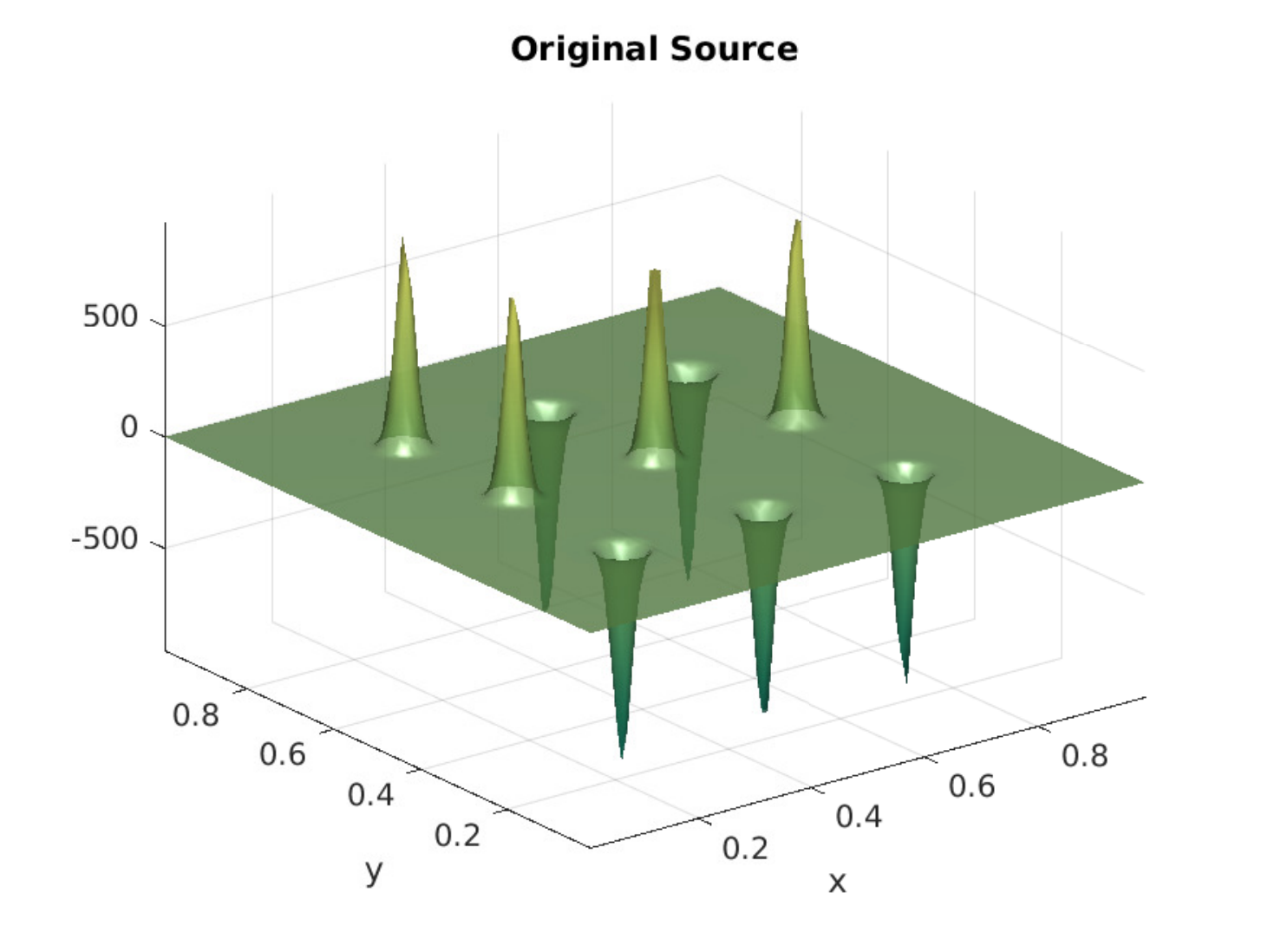}} \quad
		\subfloat[Tikhonov regularization]
		{\includegraphics[width=4.6cm]{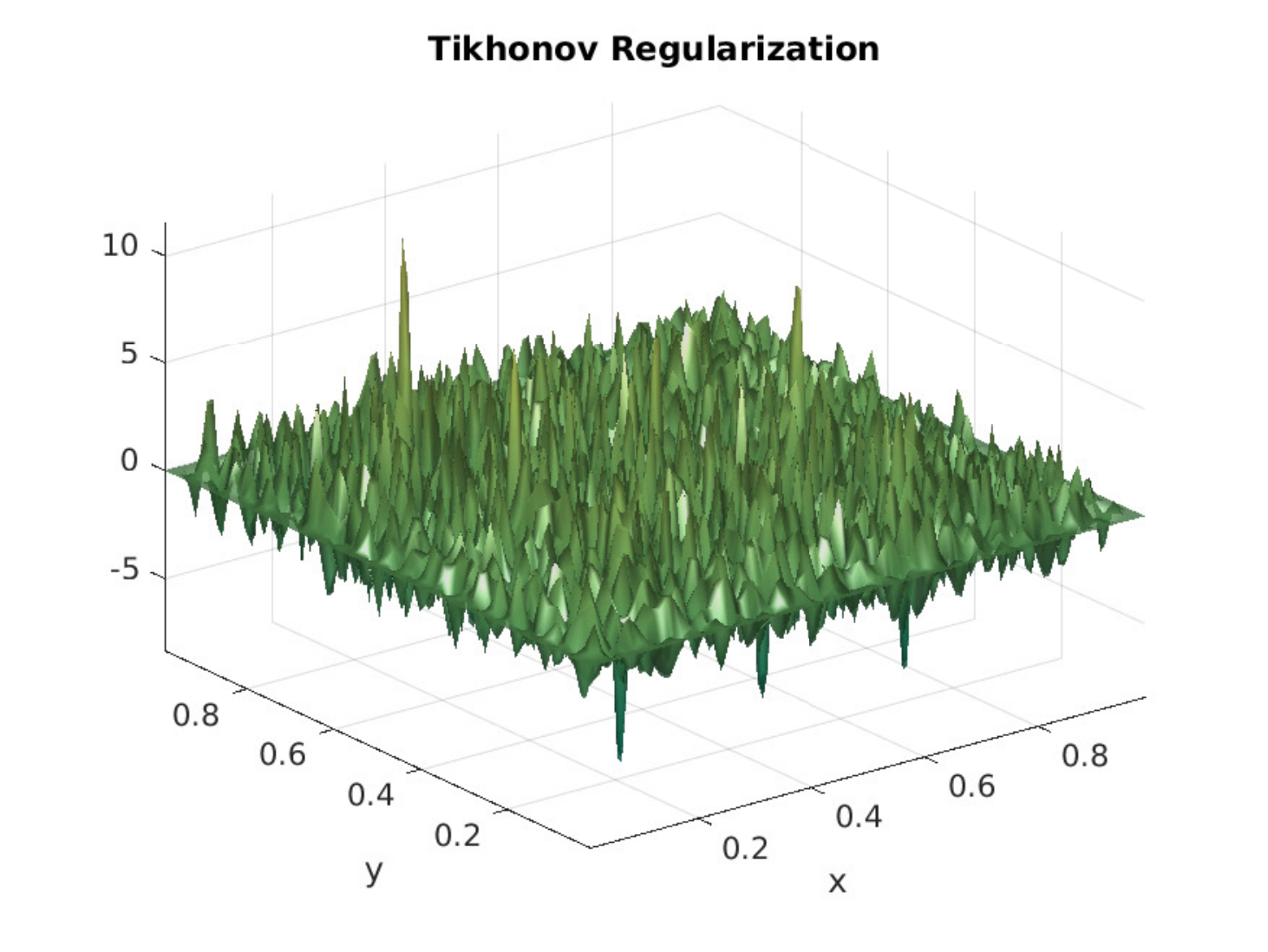}}\quad
		\subfloat[Sparse Regularization]
		{\includegraphics[width=4.6cm]{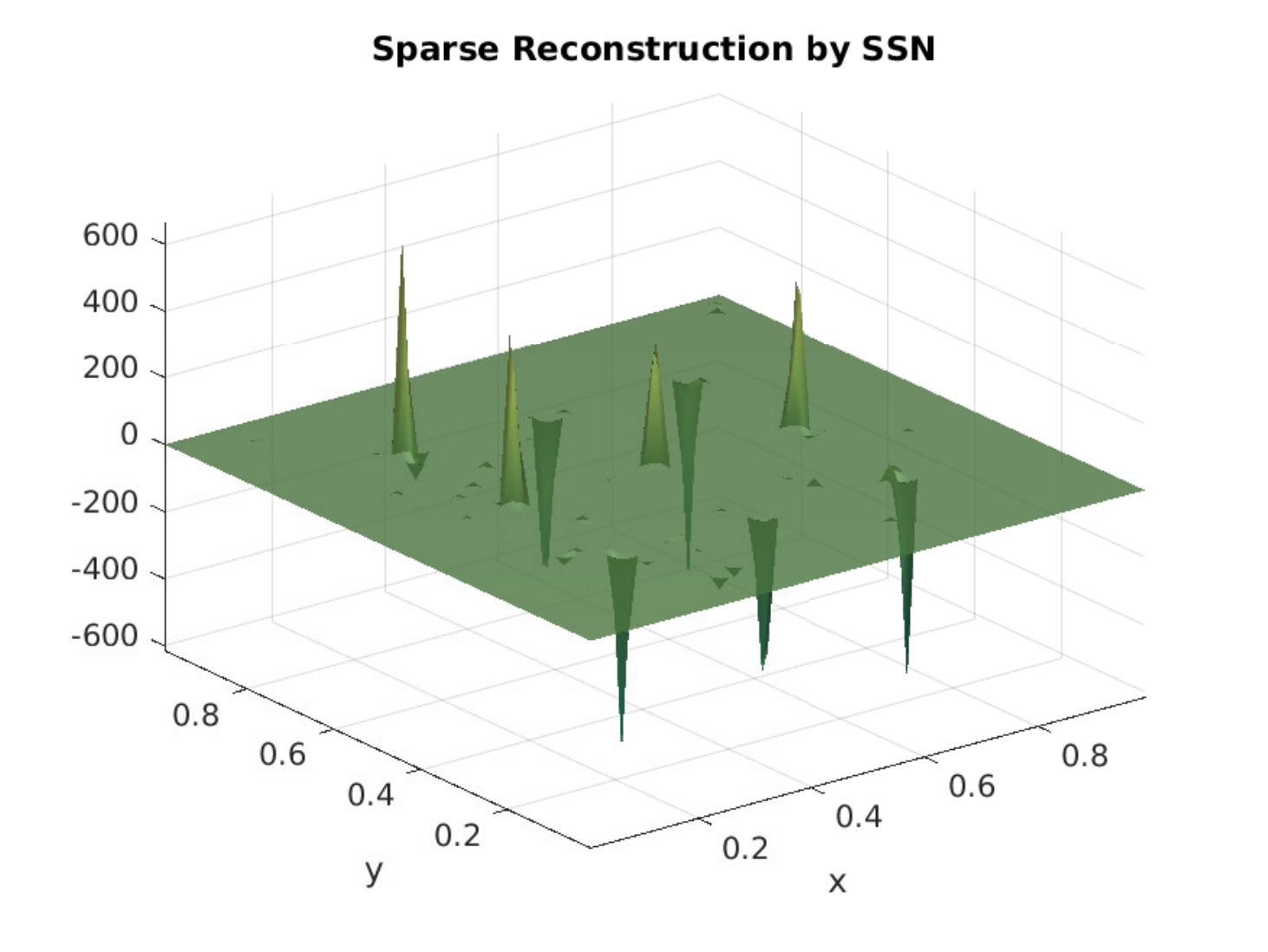}} \\
		\subfloat[Original source: position]
		{\includegraphics[width=4.6cm]{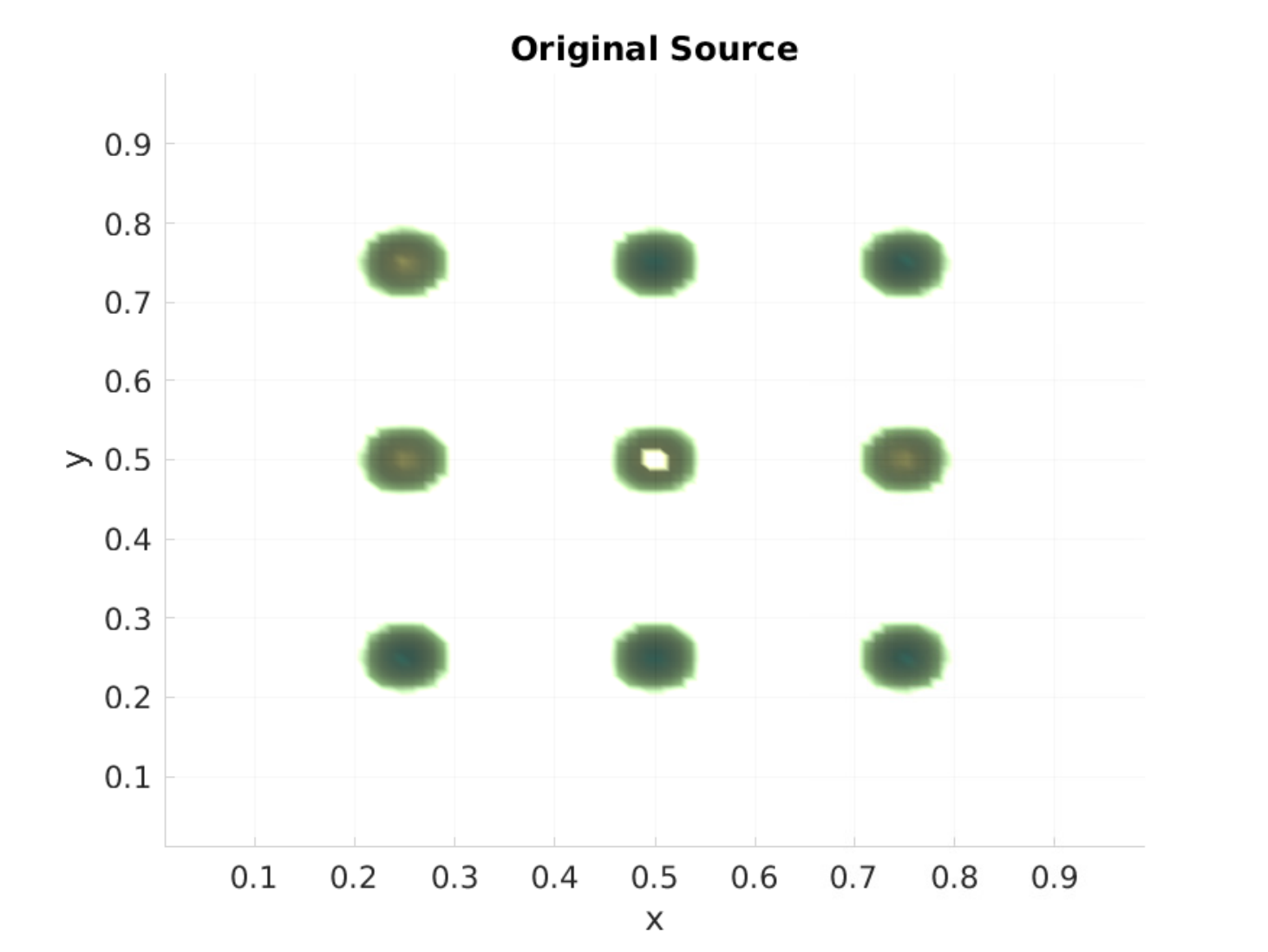}} \quad
		\subfloat[Tikhonov regularization: position]
		{\includegraphics[width=4.6cm]{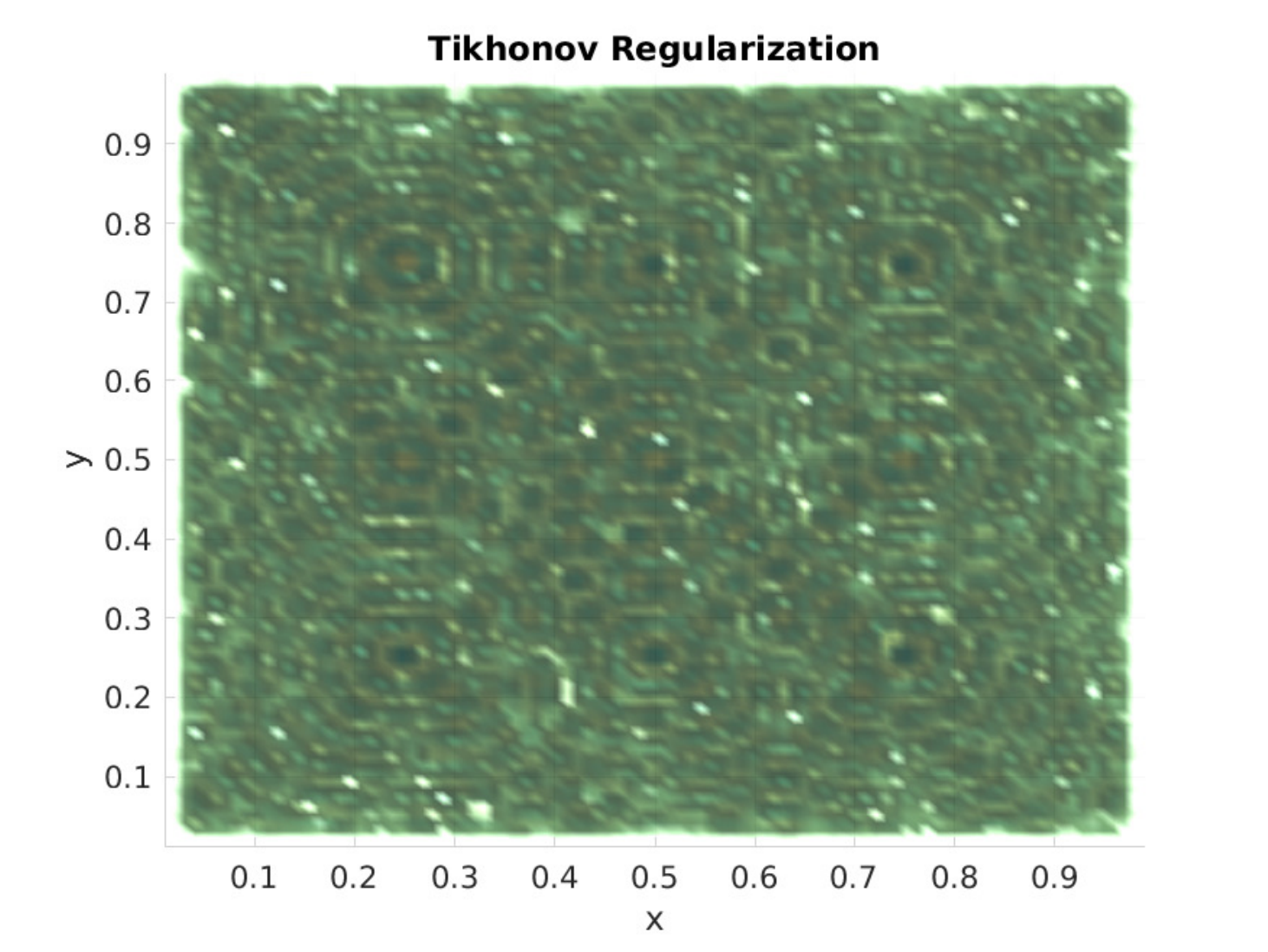}}\quad
		\subfloat[Sparse Regularization: position]
		{\includegraphics[width=4.6cm]{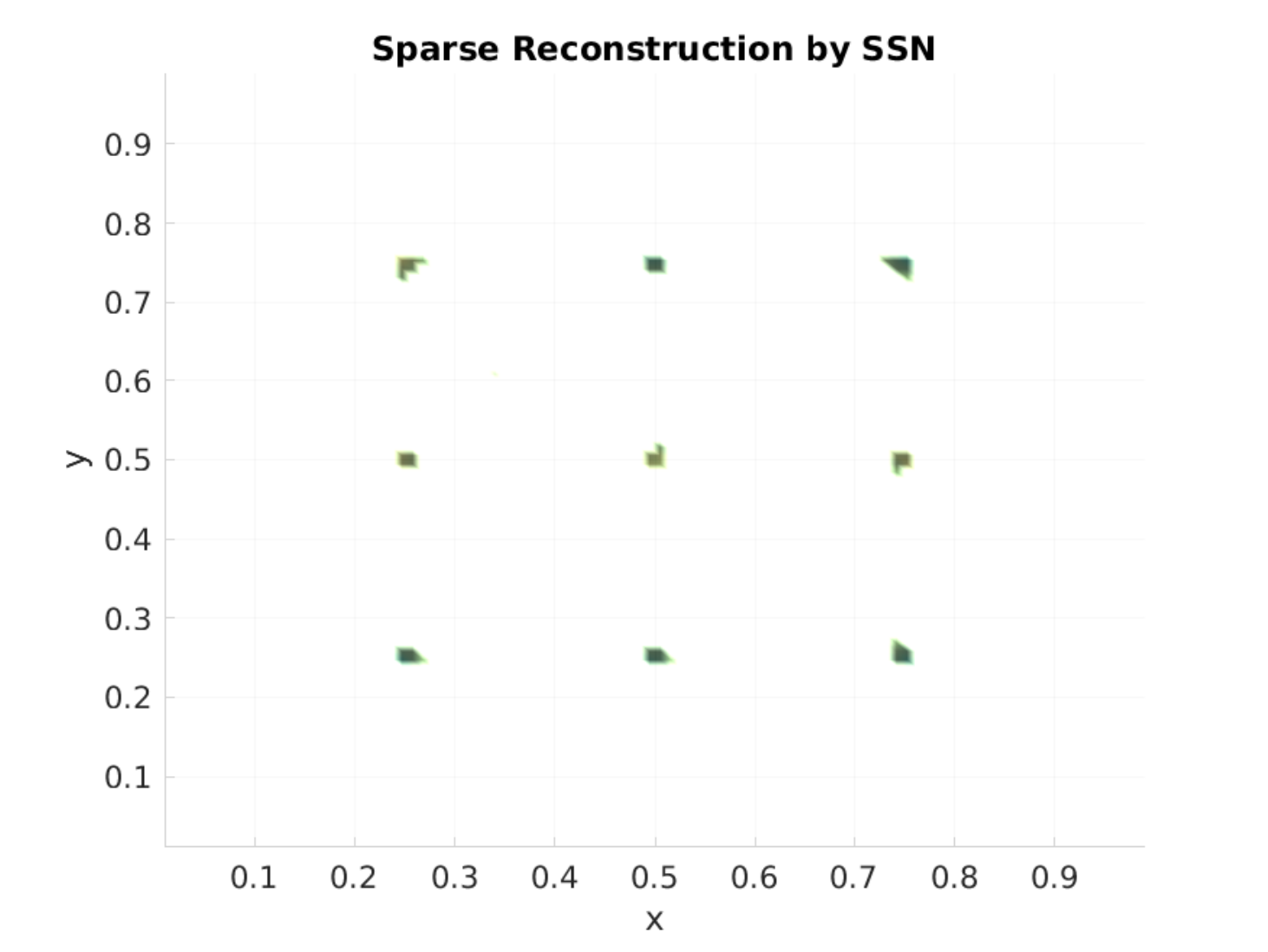}} \\
		%		\subfloat[rpADMM($10^{-3}$)]
		%		{\includegraphics[width=3.5cm]{fig/aerial_rpadmm_03.png}} \
		%\subfloat[fpADMM($10^{-6}$)]
		%{\includegraphics[width=4.5cm]{fig/fpadmm06shooter.png}} \quad
		%		\subfloat[rpADMM($10^{-5}$)]
		%		{\includegraphics[width=3.5cm]{fig/aerial_rpadmm_05.png}}
	\end{center}
	%\caption{Fronalpstock}
	\caption{Sparse sources of 9 peaks.}
	\label{9ps}
\end{figure}

\begin{table}%[!htb]
	\centering % centering table
	%\begin{tabular}{rl@{}r@{}rl@{}r@{}rl@{}r@{}rl@{}r@{}rl@{}lrl@{}r@{}r} % creating ten columns
	%\begin{tabular}{lr@lr@r@lr@r@lr@r@lr@r@l} % creating ten columns
		\begin{tabular}{l*{14}{c}r}

		%\cmidrule{2-9}
		%& \multicolumn{6}{c}{$\alpha = 0.2$}
%		& \multicolumn{6}{c}{$\alpha = 0.05$, gap $\leq  10^{-12}$}
%		& \multicolumn{9}{c}{$\alpha = 0.05$, gap $\leq  10^{-8}$}\\
		
		%\cmidrule{2-12} %inserting double-line

		\midrule
		%\cmidrule{14-15}
		& \multicolumn{2}{c}{$\gamma = 10^{5}$}
		& \multicolumn{2}{c}{$\gamma = 10^{6}$}
		& \multicolumn{2}{c}{$\gamma = 10^{7}$}
		& \multicolumn{2}{c}{$\gamma = 10^{8}$}
		& \multicolumn{2}{c}{$\gamma = 10^{9}$}
		& \multicolumn{2}{c}{$\gamma = 10^{10}$}
		\\
		%\cmidrule{1-15} % inserts single-line
		\midrule

		%\midrule % inserts single-line
		 $k=6$  &&2     &&3  &&5&&4  &&2 &&1 \\
		 $k=12$ && 2   &&4  &&5 &&5  &&5 &&2\\
 		 $k=24$ && 2   &&4  &&4 &&7  &&3 &&1\\

		\bottomrule % inserts single-line
	\end{tabular}
	
	\vspace*{-0.5em}
	\caption{SSN iteration number with different wave numbers  for example  \ref{9ps}. The sizes of matrix $\mD$ are $576\times 576$ for $k=6$, $2304 \times 2304$ for $k=12$ and $9216 \times 9216$ for $k=24$.}
	\label{tab:ssn:mesh:in}
\end{table}

%\begin{table}%[!htb]
%	\centering % centering table
%	%\begin{tabular}{rl@{}r@{}rl@{}r@{}rl@{}r@{}rl@{}r@{}rl@{}lrl@{}r@{}r} % creating ten columns
%	%\begin{tabular}{lr@lr@r@lr@r@lr@r@lr@r@l} % creating ten columns
%	\begin{tabular}{l*{14}{c}r}
%		
%		%\cmidrule{2-9}
%		%& \multicolumn{6}{c}{$\alpha = 0.2$}
%		%		& \multicolumn{6}{c}{$\alpha = 0.05$, gap $\leq  10^{-12}$}
%		%		& \multicolumn{9}{c}{$\alpha = 0.05$, gap $\leq  10^{-8}$}\\
%		
%		%\cmidrule{2-12} %inserting double-line
%		
%		
%		\midrule
%		%\cmidrule{14-15}
%		& \multicolumn{2}{c}{$\gamma = 10^{5}$}
%		& \multicolumn{2}{c}{$\gamma = 10^{6}$}
%		& \multicolumn{2}{c}{$\gamma = 10^{7}$}
%		& \multicolumn{2}{c}{$\gamma = 10^{8}$}
%		& \multicolumn{2}{c}{$\gamma = 10^{9}$}
%		& \multicolumn{2}{c}{$\gamma = 10^{10}$}
%		\\
%		%\cmidrule{1-15} % inserts single-line
%		\midrule
%		
%		%\midrule % inserts single-line
%		$k=6$  &&4     &&4  &&4 &&7  &&1 &&0 \\
%		$k=12$ && 3   &&5  &&7 &&6  &&3 &&2\\
%		$k=24$ && 3   &&5  &&7 &&6  &&7 &&2\\
%		
%		\bottomrule % inserts single-line
%	\end{tabular}
%	
%	\vspace*{-0.5em}
%	\caption{SSN iteration number with different wave numbers  for Figure  \ref{12ps}. The size of matrix $A$ are $576\times 576$ for $k=6$, $2304 \times 2304$ for $k=12$ and $9216 \times 9216$ for $k=24$.}
%	\label{tab:ssn:mesh:in}
%\end{table}
For the inhomogenous medium case, we choose the velocity field $c(x,y) =  1.0 +  10I_{\{(x,y)\in \Omega: \ x > 0.3\}}(x,y)+20 I_{\{(x,y)\in \Omega: \ y < 0.3\}}(x,y)$ such that $n(x) = \frac{1}{c^2(x)}$, where $I_{\{ \cdot \}}(x,y)$ is the indicator function in measure theory. Still, supposing $a = 1000$, $b=3000$, $k=12$, $\alpha = 10^{-5}$ and noise level $\epsilon=0.01$, we choose the following sparse sources with 7 peaks; see Figure \ref{7ps}(a),
\begin{align*}
f_{7}(x,y) =& -ae^{-b((x-1/4)^2+(y-1/4)^2)} - ae^{-b((x-3/4)^2+(y-3/4)^2)}
+ ae^{-b((x-1/4)^2+(y-1/2)^2)} \\
 & - ae^{-b((x-1/2)^2+(y-3/4)^2)}  - ae^{-b((x-3/4)^2+(y-1/4)^2)}
+ ae^{-b((x-1/4)^2+(y-3/4)^2)} \\
&+ ae^{-b((x-1/2)^2+(y-1/2)^2)}.
\end{align*}
\begin{figure}[!htb]
	%\graphicspath{{fig//}}
	\begin{center}
		\subfloat[Original source]
		{\includegraphics[width=4.6cm]{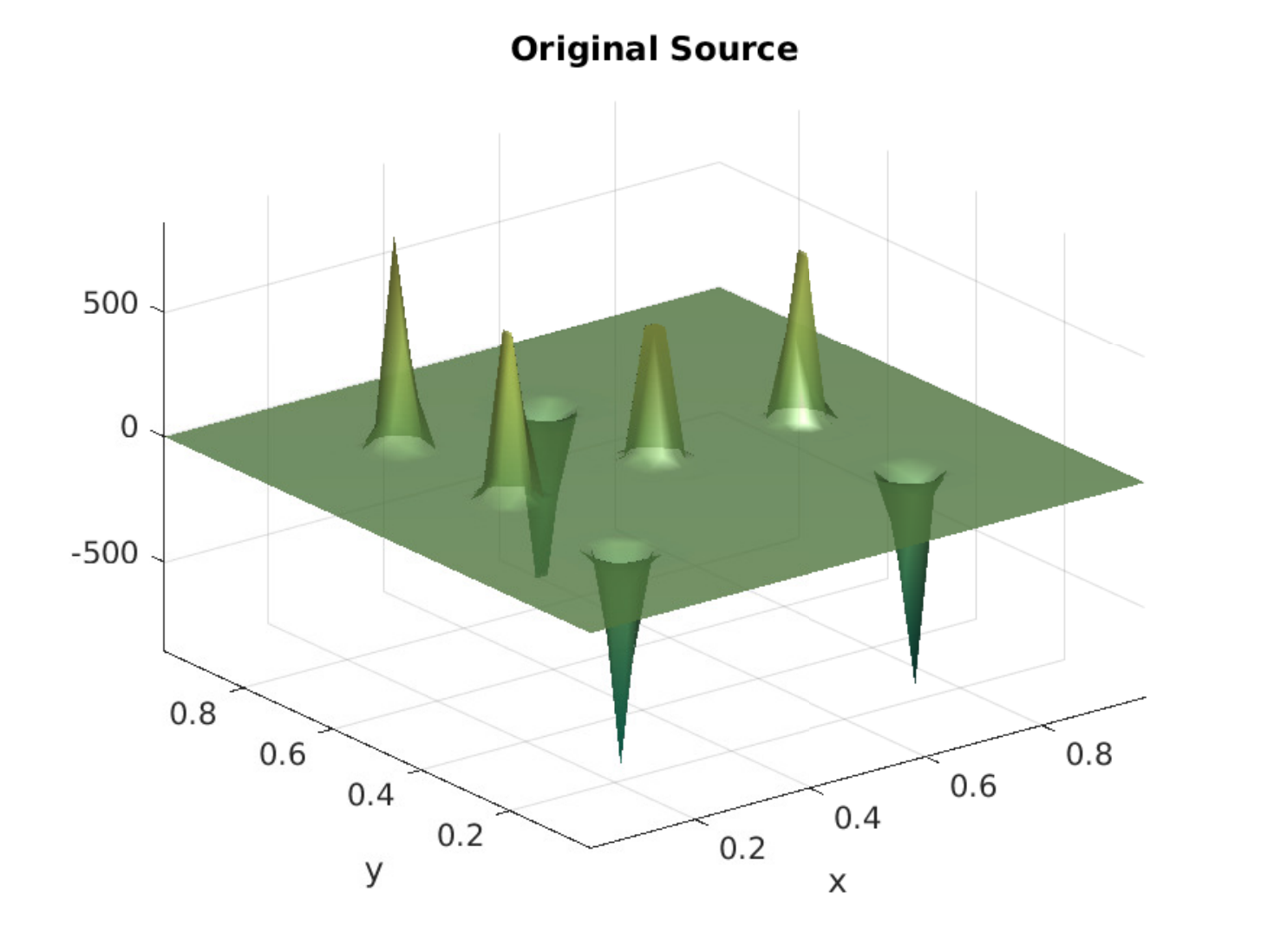}} \quad
		\subfloat[Tikhonov regularization]
		{\includegraphics[width=4.6cm]{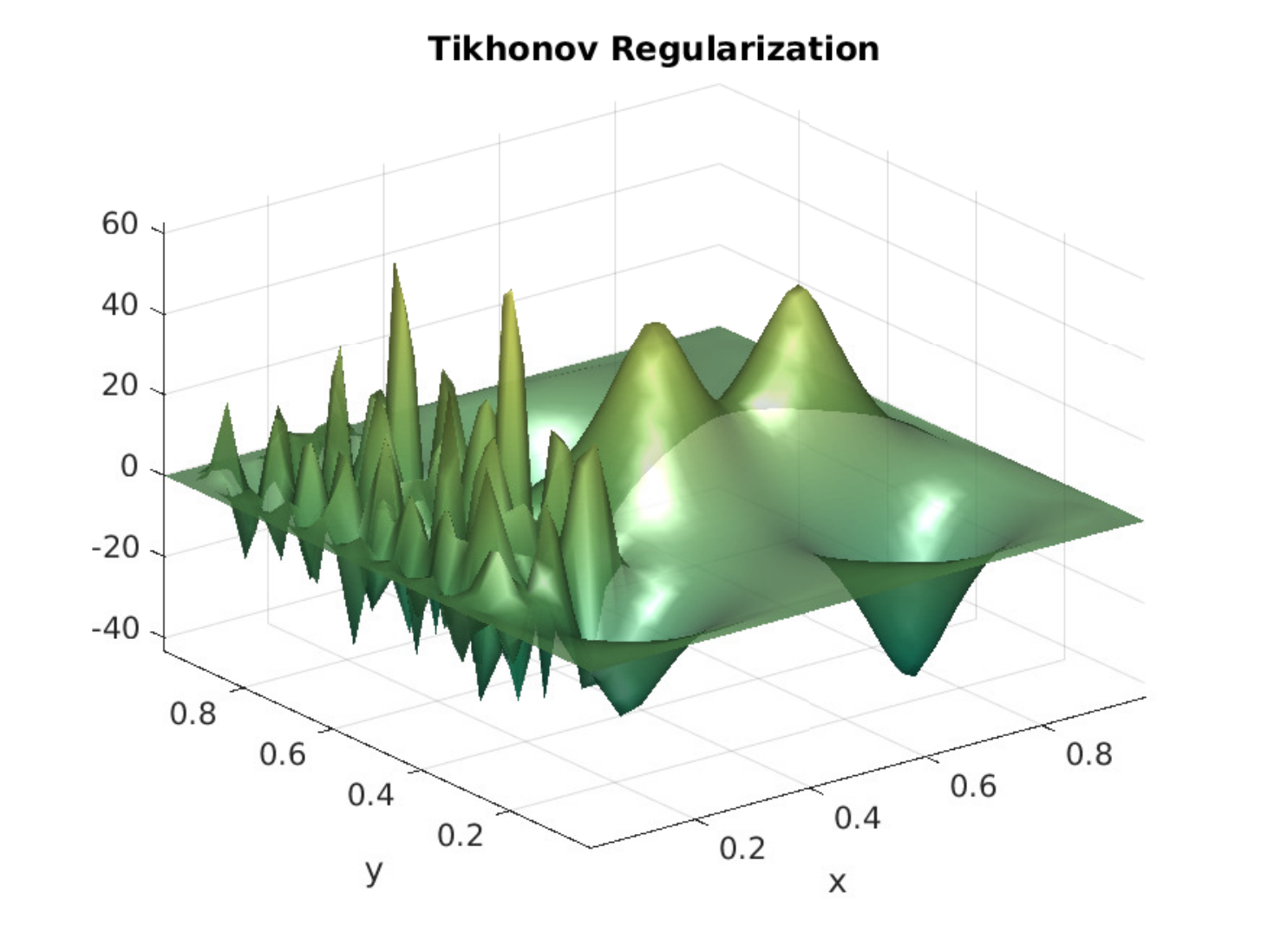}}\quad
		\subfloat[Sparse Regularization]
		{\includegraphics[width=4.6cm]{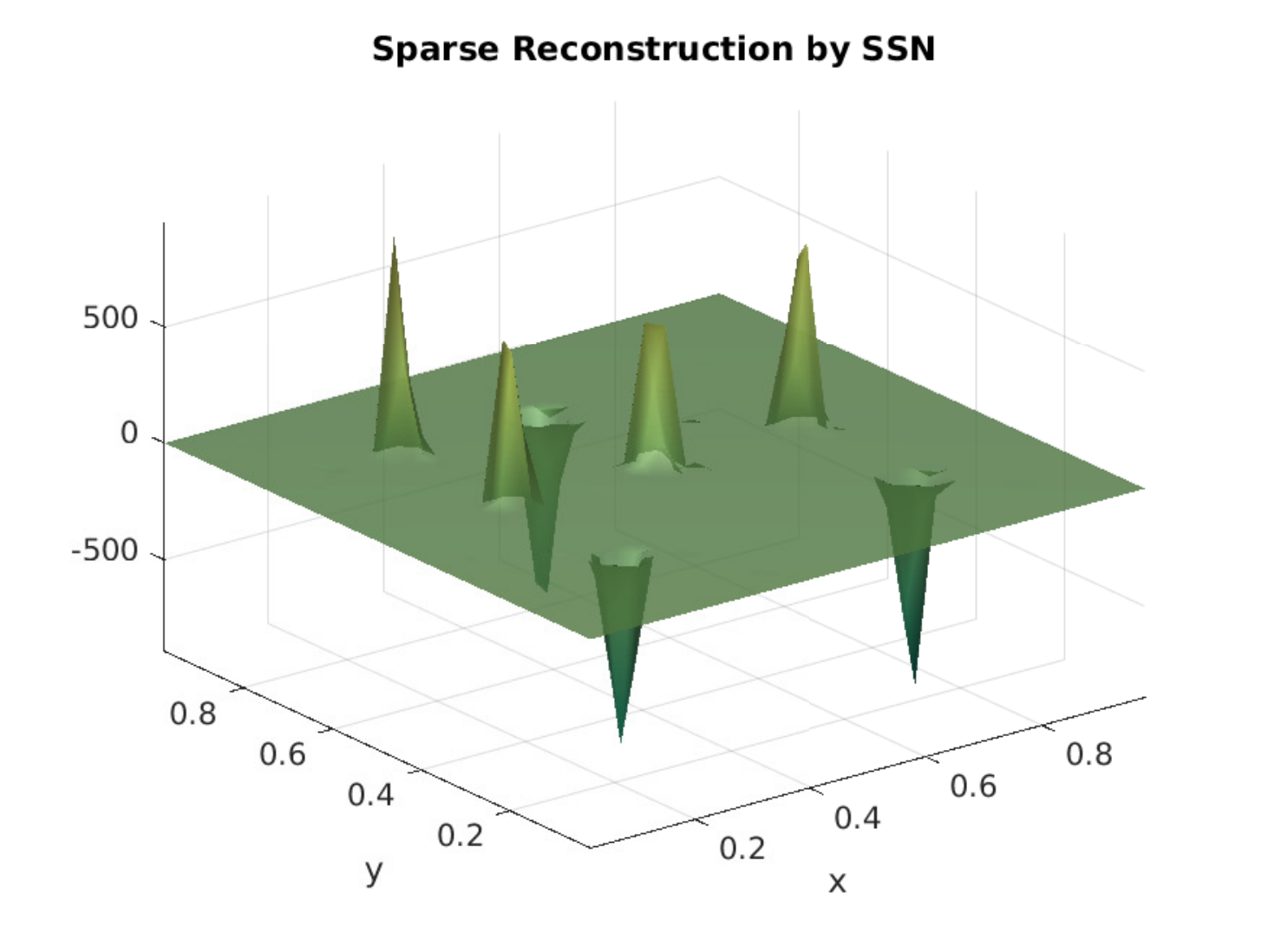}} \\
		\subfloat[Original source: position]
		{\includegraphics[width=4.6cm]{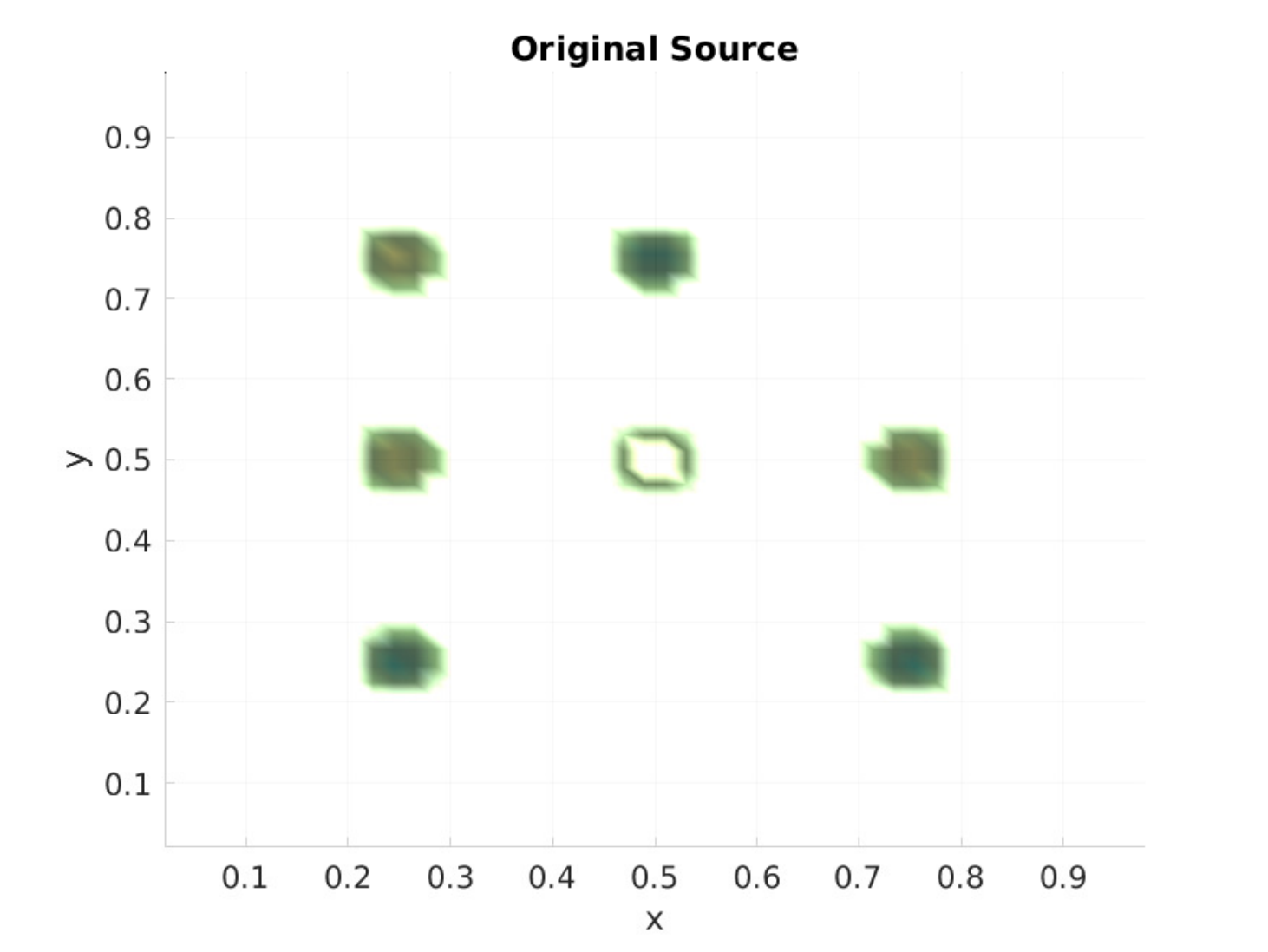}} \quad
		\subfloat[Tikhonov regularization: position]
		{\includegraphics[width=4.6cm]{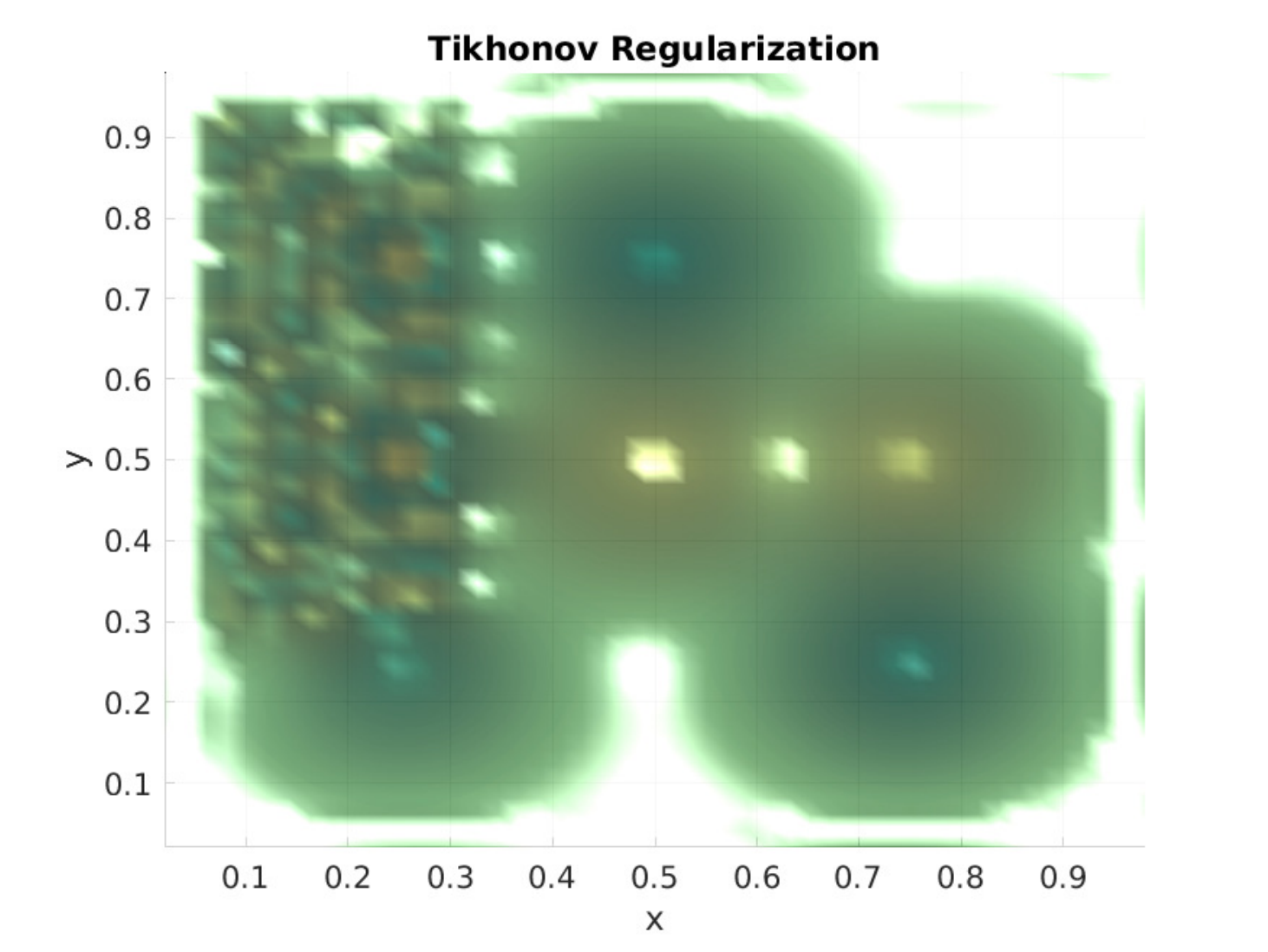}}\quad
		\subfloat[Sparse Regularization: position]
		{\includegraphics[width=4.6cm]{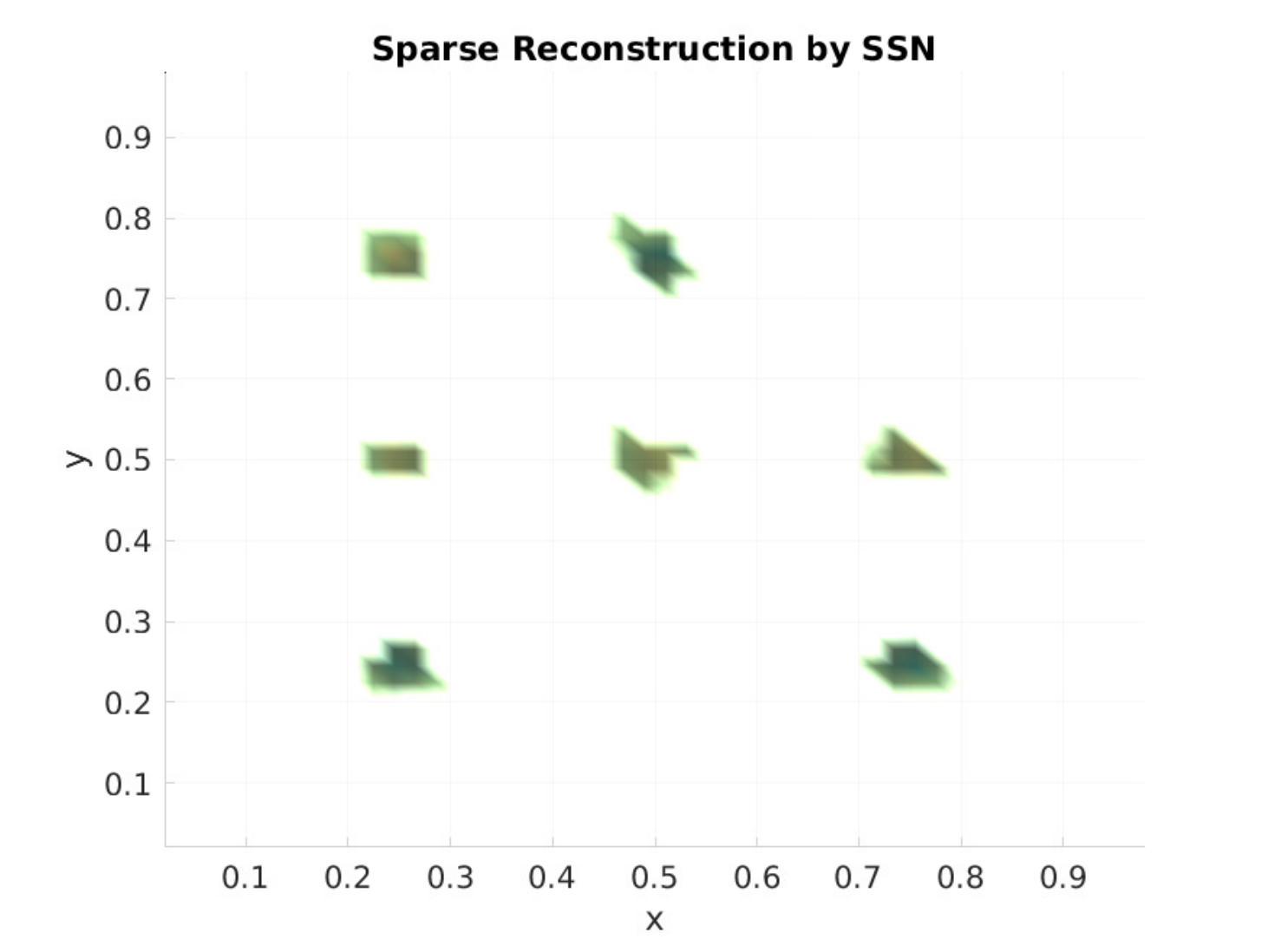}} \\
		%		\subfloat[rpADMM($10^{-3}$)]
		%		{\includegraphics[width=3.5cm]{fig/aerial_rpadmm_03.png}} \
		%\subfloat[fpADMM($10^{-6}$)]
		%{\includegraphics[width=4.5cm]{fig/fpadmm06shooter.png}} \quad
		%		\subfloat[rpADMM($10^{-5}$)]
		%		{\includegraphics[width=3.5cm]{fig/aerial_rpadmm_05.png}}
	\end{center}
	%\caption{Fronalpstock}
	\caption{Sparse sources of 7 peaks of inhomogenous medium.}
	\label{7ps}
\end{figure}
From Figure \ref{4ps}, \ref{9ps}, \ref{7ps},  we see that the sparse regularization can get better reconstructions with more accurate reconstructed positions and approximate shapes than the usual Tikhonov regularization no matter the background medium is homogeneous or inhomogeneous. Moreover,
the sparse reconstructions are more sound with higher frequency, while the Tikhonov regularization does not work then.

From Table \ref{tab:ssn:mesh:in}, we see the mesh independent property, i.e., the iteration number of the semismooth Newton method is independent with the mesh size once the mesh size is small enough \cite{HU}.
\section{Conclusions}
We first studied the well-posedness of the direct acoustic scattering problem with sparse sources in the Radon measure space. We gave a definition of the very ``weak" solution considering the Sommerfeld radiating boundary condition. The well-posedness of the direct scattering problem can guarantee the existence of the inverse reconstruction in measure space.
 Sparse regularization is employed for the sparse reconstructions.  For the non-smooth regularization functional, we use the semismooth Newton method to the predual problem for solving it. 
  Numerical experiments show our method can locate the sparse sources and approximate the amplitude. Moreover, the reconstruction with high frequency is more robust the noise level and is of high resolution. However, the computation of the direct problem is quite challenging. It would be interesting to analyze the high frequency case along with efficient newly developed computational algorithms for the high frequency case \cite{cx121}.

\section*{Acknowledgements}

H. Sun acknowledges the support of NSF of China under grant No. \,11701563 and
Fundamental Research Funds for the Central Universities, and the
research funds of Renmin University of China (15XNLF20). H. Sun also acknowledges the support of Alexander von Humboldt Foundation. He acknowledges the discussion with Dr. Luo Yong, Dr. Yang Jiaqing and Dr. Hu Guanghui.
X. Xiang acknowledges the fund of NSF of China under grant No. \, 11501559. The authors also appreciate many helpful and invaluable comments from the referee.

\section{Appendix: Reconstruction by the real part of wave field}
In the following part, we will focus on the case of employing the real part $u_R$ only, i.e., replacing $\mV$ and $u$ by $\mV_R$ and $u_R$ in \eqref{eq:sparse:functional}.
For the case $n(x) \equiv 1$ at least, we found that the real part of the wave fields also carries very important information, which can also benefit the fast semismooth Newton methods. It can be verified that
\begin{equation}\label{eq:volume:real:1}
u_R(x): = \Re(\mV(\mu)(x)) = \mV_{R}(\mu)(x) = \int_{\Omega} \Re G(x,y)d\mu(y).
\end{equation}
For $n(x)\equiv 1$, we know $G(x,y) = \Phi(x,y)$,  $\Re \Phi(x,y) = -\frac{1}{4} Y_{0}(k|x-y|)$ in $\mathbb{R}^2$ with $Y_{0}(k|x-y|)$ being the zeroth order second kind of bessel function and $\Re \Phi(x,y) = \cos(k|x-y|)/(4 \pi |x-y|)$ in $\mathbb{R}^3$. Here and in the following, we assume $n(x) \equiv 1 $.

\begin{Lemma}\label{lem:thesame:real:whole}
	$\mV_{R}(\mu)(x)=0$ if and only if $\mV(\mu)(x)=0$ in $B_{R_2}$.
\end{Lemma}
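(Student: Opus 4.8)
The plan is to reduce both implications to the single fact that the source $\mu$ is entirely recovered from the \emph{real} part of the volume potential. One implication is immediate: since $\mu$ is a real measure, \eqref{eq:volume:real:1} gives $\mV_R(\mu)=\Re\,\mV(\mu)$ (as elements of $W^{1,p}(B_{R_2})$), so $\mV(\mu)\equiv 0$ on $B_{R_2}$ trivially forces $\mV_R(\mu)\equiv 0$ on $B_{R_2}$. Thus it remains to prove that $\mV_R(\mu)\equiv 0$ on $B_{R_2}$ forces $\mu=0$, which then yields $\mV(\mu)=\int_\Omega\Phi(\cdot,y)\,d\mu(y)\equiv 0$ on all of $\mathbb{R}^d$.

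The key observation I would exploit is that the singularity, i.e.\ the Dirac mass, of the fundamental solution is carried entirely by its real part. Using the Bessel expansions from \cite{AS}, $\Phi(x,y)=-\tfrac14 Y_0(k|x-y|)+\tfrac{i}{4}J_0(k|x-y|)$ in $\mathbb{R}^2$ and $\Phi(x,y)=\tfrac{\cos(k|x-y|)}{4\pi|x-y|}+i\,\tfrac{\sin(k|x-y|)}{4\pi|x-y|}$ in $\mathbb{R}^3$, so $\Im\Phi(x,\cdot)$ is the \emph{regular} radial Helmholtz solution and satisfies $(\Delta_x+k^2)\Im\Phi(x,y)=0$ for all $x,y$, with no distributional mass at $x=y$. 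Hence $\mV_I(\mu)(x):=\int_\Omega\Im\Phi(x,y)\,d\mu(y)$ obeys $(\Delta+k^2)\mV_I(\mu)=0$ in $\mathbb{R}^d$, while all of the source is seen by $\mV_R(\mu)$. More precisely, by Theorem \ref{thm:connect:volumeweak} the volume potential $\mV(\mu)$ coincides with the very weak solution of \eqref{eq:helm} for $n\equiv 1$; testing Definition \ref{def:veryweak:helm} against $\varphi\in C_0^\infty(B_{R_2})\subset C_0^{2,\alpha}(B_{R_2})$ kills the boundary terms in $a(\cdot,\cdot)$ and gives $(-\Delta-k^2)\mV(\mu)=\mu$ in the distributional sense on $B_{R_2}$. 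Since $-\Delta-k^2$ is a real operator and $\mu$ is real, taking real parts yields $(-\Delta-k^2)\mV_R(\mu)=\mu$ in $\mathcal{D}'(B_{R_2})$ (and taking imaginary parts recovers the homogeneity of $\mV_I(\mu)$ noted above).

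With this identity the conclusion is immediate: if $\mV_R(\mu)\equiv 0$ on $B_{R_2}$, then $\mu=(-\Delta-k^2)\mV_R(\mu)=0$ as a distribution on $B_{R_2}$, and since $\supp\mu\Subset\Omega\Subset B_{R_2}$ this means $\mu=0$ as a Radon measure; hence $\mV(\mu)\equiv 0$ everywhere, in particular on $B_{R_2}$.

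The only delicate point — the part I expect to need the most care — is justifying the passage from the $C^{2,\alpha}$ test-function formulation of Definition \ref{def:veryweak:helm} to the genuine distributional identity $(-\Delta-k^2)\mV_R(\mu)=\mu$ on $B_{R_2}$, together with the splitting into real and imaginary parts; this rests on the inclusion $C_0^\infty(B_{R_2})\subset C_0^{2,\alpha}(B_{R_2})$ and on the explicit Bessel decomposition of $\Phi$ guaranteeing that its imaginary part is smooth and source-free. Everything else is routine bookkeeping.
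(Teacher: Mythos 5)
Your proof is correct, but it takes a genuinely different route from the paper's. The paper also splits $\Phi=\Re\Phi+i\,\Im\Phi$ and notes that $\mV_I(\mu)$ has a smooth, source-free kernel ($\tfrac14 J_0(k|x-y|)$ in $\mathbb{R}^2$, $\tfrac{\sin(k|x-y|)}{4\pi|x-y|}$ in $\mathbb{R}^3$), but then argues globally: if $\mV_R(\mu)=0$, then $\mV(\mu)=i\,\mV_I(\mu)$ is simultaneously a radiating solution and an entire (Herglotz-type) solution of the homogeneous Helmholtz equation, hence vanishes by the classical uniqueness result in Chapter 2.2 of \cite{CK}. You instead exploit the same decomposition locally: since the Dirac mass of $\Phi(\cdot,y)$ sits entirely in its real part, $(-\Delta-k^2)\mV_R(\mu)=\mu$ in $\mathcal{D}'(B_{R_2})$, so $\mV_R(\mu)\equiv0$ forces $\mu=0$ and a fortiori $\mV(\mu)\equiv0$. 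Your version is arguably stronger and cleaner for the paper's later purposes (it directly gives $\mathrm{Ker}(\mV_R)=\{0\}$, which is what Remark \ref{rem:ker:same} and Lemma \ref{lem:Dr:invertible} actually need, and it avoids the radiation condition at infinity and the slightly awkward phrasing in the paper that $u$ ``is a radiating solution with $\mu=0$''), at the cost of having to justify the distributional identity; your justification via testing Definition \ref{def:veryweak:helm} against $C_0^\infty(B_{R_2})\subset C^{2,\alpha}(B_{R_2})$ (which kills the boundary terms) together with Theorem \ref{thm:connect:volumeweak}, or alternatively a direct Fubini argument with the locally integrable kernel $\Re\Phi$, is sound. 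Both arguments rely on $\mu$ being real and $n\equiv1$, as the appendix assumes.
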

\begin{proof}
	If $\mV(\mu)(x)=0$, since $\mu$ is a real Radon measure, we have
	$\mV_{R}(\mu)(x) = \Re \mV(\mu)(x)=0$. Now we turn to $\mV_{R}(\mu)(x)=0 $ case. We first prove the case in $\mathbb{R}^2$. Let's introduce
	\[
	\mV_I(\mu)(x) = \Im \mV(\mu)(x) = \frac{1}{4} \int_{\Omega} J_{0}(k|x-y|)d\mu(y).
	\]
	It can be seen that $\mV_I(\mu)(x) $ is an entire solution of Helmholtz equation in $\mathbb{R}^2$,
	\[
	-\Delta \mV_I(\mu)(x) - k^2 \mV_I(\mu)(x) = 0, \  x \in \mathbb{R}^2,
	\]
	by the smoothness of the kernel $J_{0}(k|x-y|)$. With the additional formulas
	(Chapter 5.12 of \cite{LEB}), for arbitrary $x = |x|e^{i\theta_x}$ and $y = |y|e^{i\theta_y}$,
%	\[
%	J_{0}(k|x-y|)= \sum_{n=-\infty}^{\infty} J_{n}(k|x|)J_{n}(k|y|)e^{in (\theta_x - \theta_y)},
%	\]
%	where $\theta_x$ and $\theta_y$ are the angels of $x$ and $y$.
%	What follows
%	 are 
	 we obtain the integral representations of $\mV(\mu)$ and $\mV_R (\mu)$,
	\begin{equation}\label{eq:herg}
	%&V(\mu)(x) = \frac{i}{4}\sum_{m=-\infty}^{\infty} \int_{\Omega}J_{n}(k|y|)e^{-in\theta_y} d\mu(y) H_{n}^{(1)}(k|x|)e^{in \theta_x}, \ x \in \mathbb{R}^2\backslash \bar \Omega \\
	\mV_I(\mu)(x) = \frac{1}{4} \sum_{m=-\infty}^{\infty} \int_{\Omega}J_{n}(k|y|)e^{-in\theta_y} d\mu(y) J_{n}(k|x|)e^{in \theta_x},\ x \in \mathbb{R}^2\backslash \bar \Omega .
	\end{equation}
	$J_{n}(k|x|)e^{in \theta_x}$ is entire solution in $\mathbb{R}^2$ for $n \in \mathbb{N}$. $\mV_{I}(\mu)(x)$ is also a Herglotz wave function by the representation of \eqref{eq:herg}. Thus if $\mV_{R}(\mu)(x)=0$, we have $u = \mV(\mu)(x) = \mV_{R}(\mu)(x) + i \mV_{I}(\mu)(x)$ is also a radiating solution of \eqref{eq:helm} with $\mu=0$. However, $\mV(\mu)(x)u = i\mV_I(\mu)(x) $ is also an entire solution. Thus $u$ must be zero (see Chapter 2.2 of \cite{CK}).
	
	For the case in $\mathbb{R}^3$, the proof is similar. We need to introduce smooth $\mV_{I}(x)$ satisfying homogeneous Helmholtz equation. We introduce
	$\Phi_{-}(x,y) = \frac{e^{-ik|x-y|}}{4 \pi|x-y|}$ which is the incoming fundamental solution and
	\[
	\mV_I(\mu)(x) = \Im \mV(\mu)(x) = \int_{\Omega} \frac{\sin(k|x-y|)}{4 \pi|x-y|}d\mu(y).
	\]
	We see
	\[
	\frac{\cos(k|x-y|)}{4 \pi |x-y|} = \frac{1}{2}(\Phi(x,y) + \Phi_{-}(x,y)), \quad \frac{\sin(k|x-y|)}{4 \pi |x-y|} =\frac{1}{2i}(\Phi(x,y) - \Phi_{-}(x,y)).
	\]
	It can be seen that $\frac{\sin(k|x-y|)}{4 \pi |x-y|}$ is smooth and satisfy the homogeneous Helmholtz equation. While $\mV_{R}(\mu)(x)=0$, we still have $u = \mV(\mu)(x) = \mV_{R}(\mu)(x) + i \mV_{I}(\mu)(x)  = i \mV_{I}(\mu)(x) $ is both the radiating solution of \eqref{eq:helm} and entire wave function in $\mathbb{R}^3$ which must be 0.
\end{proof}
The following remark follows Lemma \ref{lem:thesame:real:whole}.
\begin{Remark}\label{rem:ker:same}
	The kernel of $\mV$ and $\mV_{R}$ satisfy  $\text{Ker}(\mV) = \text{Ker}(\mV_{R})$, which means  $\text{Ker}(\mV_R) = \{0\}$ when $\text{Ker}(\mV)=\{0\}$.
\end{Remark}
\begin{Lemma}\label{lem:Dr:invertible}
	Under assumption $\mu$ being a real Radon measure, $\mD$ being invertible by Theorem \ref{thm:uniqueness:recon} and $\mD u = \mu$ in the discretization sense, we have
	\begin{equation}\label{eq:real:inverse:vR}
	\mV_{R} = \Re (\mD^{-1}).
	\end{equation}
	Furthermore, if $\text{Ker}(\mV) = \text{Ker}(\mV_{R})$ while $n(x) \equiv 1$, $\mV_{R}$ is also invertible and {$\mathcal{V}_{R}^{-1}: W^{1, p}(\Omega) \rightarrow \mathcal{M}(\Omega)$}.
	%\[
	%\mD_{R} = \mV_{R}^{-1}.
	%\]
\end{Lemma}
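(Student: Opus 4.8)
The plan is to reduce the statement to the identity $\mD^{-1}=\mV$ built into the discretization, together with what has already been established about $\mV$ and $\mV_R$ in Lemma \ref{lem:thesame:real:whole} and Remark \ref{rem:ker:same}. First, for \eqref{eq:real:inverse:vR}: since $\mD:=\mV^{-1}$ and $\mD$ is invertible (by Theorem \ref{thm:uniqueness:recon}; in the discretization $\mD\in\mathbb{C}^{n\times n}$ is square and injective, hence invertible), the relation $\mD u=\mu$ is exactly the inverse of $u=\mV\mu$, so $u=\mD^{-1}\mu=\mV\mu$. For a real measure $\mu$ the kernel splitting $G=\Re G+i\,\Im G$ used in \eqref{eq:volume:real:1} gives $\mV\mu=\mV_R\mu+i\,\mV_I\mu$ with $\mV_R\mu$ and $\mV_I\mu$ real-valued; taking real parts of $\mV\mu=\mD^{-1}\mu$ and using that $\mu$ is real yields $\mV_R\mu=\Re(\mD^{-1})\mu$. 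As this holds for every real $\mu$, we obtain $\mV_R=\Re(\mD^{-1})$. This step is essentially an unwinding of the definitions.

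Second, I would prove that $\mV_R$ is injective. Because $\mD=\mV^{-1}$ exists, $\mV$ is injective, i.e.\ $\mathrm{Ker}(\mV)=\{0\}$. The hypothesis $\mathrm{Ker}(\mV)=\mathrm{Ker}(\mV_R)$ — which for $n(x)\equiv 1$ is precisely Remark \ref{rem:ker:same}, itself read off from Lemma \ref{lem:thesame:real:whole} (if $\mu$ is real and $\mV_R\mu=0$, then $\mV\mu=i\mV_I\mu$ is simultaneously radiating and entire, hence $\equiv 0$, hence $\mu=0$) — then forces $\mathrm{Ker}(\mV_R)=\{0\}$, so $\mV_R$ is injective on $\mathcal{M}(\Omega)$.

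Third, injectivity gives invertibility: in the discretized setting $\mV_R=\Re(\mD^{-1})\in\mathbb{R}^{n\times n}$ is a square real matrix, so it is invertible and $\mV_R^{-1}$ is its real inverse matrix; this is the sense in which $\mV_R^{-1}$ acts from $W^{1,p}(\Omega)$ to $\mathcal{M}(\Omega)$ after discretization. In the continuous reading one defines $\mV_R^{-1}$ on $\mathrm{range}(\mV_R)$ by $\mV_R^{-1}:=\mD\circ S$, where $S$ sends $u_R=\mV_R\mu$ to the full field $u=\mV\mu$; the map $S$ is well defined precisely because the previous step shows $u_R$ determines $\mu$ and hence $u$, and composing with $\mD=\mV^{-1}$, which by Theorem \ref{thm:w1p:estimate} and Theorem \ref{thm:uniqueness:recon} is a well-defined map from $\mathrm{range}(\mV)\subseteq W^{1,p}(\Omega)$ into $\mathcal{M}(\Omega)$, gives the asserted $\mV_R^{-1}$.

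I expect the only genuinely nontrivial point — and it arises in the infinite-dimensional setting only — to be the boundedness of $S$, equivalently the closedness of $\mathrm{range}(\mV_R)$: one must control $\mV_I\mu$ in terms of $\mV_R\mu$. For $n(x)\equiv 1$ this is accessible through the Herglotz/entire-function representation of $\mV_I$ in Lemma \ref{lem:thesame:real:whole} together with interior analytic estimates away from $\supp(\mu)$, but since every computation in the paper is carried out after discretization, where $S$ is automatically bounded, no such estimate is required for the numerical method.
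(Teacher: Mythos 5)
Your argument is correct and follows essentially the same route as the paper: split $\mV=\mD^{-1}=L_1+iL_2$ into real and imaginary parts, read off $\mV_R\mu=\Re(\mD^{-1})\mu$ for real $\mu$, and then combine $\mathrm{Ker}(\mV)=\mathrm{Ker}(\mV_R)$ from Remark \ref{rem:ker:same} with the fact that an injective square real matrix is invertible. Your closing discussion of the genuinely infinite-dimensional issue (boundedness of the passage from $u_R$ to $u$, i.e.\ closedness of the range of $\mV_R$) goes beyond the paper, which justifies the mapping claim only in the discretized setting exactly as you anticipate.
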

\begin{proof}
	{Although} $-\Delta-k^2n(x)$ with PML is an indefinite linear operator, it is reasonable to assume its discretized operator is invertible. % Otherwise, the direct problem could not be computed by PML which is a popular method.
	Denoting $\mV= \mD^{-1}  = L_{1} + iL_{2}$ where $L_1$ and $L_2$ are both real matrix, we have
	\[
	u_{R} + iu_{I} = \mD^{-1} \mu = (L_{1} + iL_{2})\mu = L_1 \mu + i L_{2} \mu.
	\]
	What follows is $u_{R} = L_1 \mu$. While $\text{Ker}(\mV) = \text{Ker}(\mV_{R})$ when $n(x) \equiv 1$, by Remark \ref{rem:ker:same}, since $\text{Ker} (\mD^{-1}) = \text{Ker} (L_1)$, we have $L_1 = \mV_{R} = \Re \mV$ is also invertible. We thus get
	\[
	\mu = L_{1}^{-1} u_{R} = \mV_{R}^{-1}u_{R}.
	\]
\end{proof}
Although one needs to compute $\mD^{-1}$ for $\mV_R$ as suggested by \eqref{eq:real:inverse:vR} which is usually very expensive, the numerical performance with real part of scattered field is quite similar to the case with complex-valued scattered field and we omit them here.

\end{document}